\newcommand{\bbD}{{\mathbb{D}}}
\newcommand{\bbN}{{\mathbb{N}}}
\newcommand{\bbR}{{\mathbb{R}}}
\newcommand{\bbZ}{{\mathbb{Z}}}
\newcommand{\bbC}{{\mathbb{C}}}
\newcommand{\bbT}{{\mathbb{T}}}
\newcommand{\bbA}{{\mathbb{A}}}
\newcommand{\cA}{{\mathcal{A}}}
\newcommand{\cB}{{\mathcal{B}}}
\newcommand{\cF}{{\mathcal{F}}}
\newcommand{\fA}{{\mathfrak{A}}}
\newcommand{\fa}{{\mathfrak{a}}}
\newcommand{\fj}{{\mathfrak{j}}}
\newcommand{\fz}{{\mathfrak{z}}}
\newcommand{\bC}{{\mathbf{C}}}
\newcommand{\ba}{{\mathbf{a}}}
\newcommand{\bb}{{\mathbf{b}}}
\newcommand{\bc}{{\mathbf{c}}}
\newcommand{\bp}{{\mathbf{p}}}
\newcommand{\z}{\zeta}
\newcommand{\vp}{{\vec{p}}}
\newcommand{\vq}{{\vec{q}}}
\newcommand{\vbp}{{\vec{\mathbf{p}}}}
\renewcommand{\Im}{\text{\rm Im}\,}
\newcommand{\GMP}{\text{\rm GMP}}
\newcommand{\A}{\text{\rm A}}
\newcommand{\tr}{\text{\rm tr}\,}
\newcommand{\Res}{\text{\rm Res}\,}
\renewcommand{\d}{{\, \rm d}}
\newcommand{\gdw}{\Leftrightarrow}
\numberwithin{equation}{section}
\newtheorem{Theorem}{Theorem}[section]
\newtheorem{Corollary}[Theorem]{Corollary}
\newtheorem{Lemma}[Theorem]{Lemma}
\newtheorem{Proposition}[Theorem]{Proposition}
 { \theoremstyle{definition}
\newtheorem{Definition}[Theorem]{Definition}

\newtheorem{Remark}[Theorem]{Remark} }
\begin{document}

\allowdisplaybreaks

\renewcommand{\thefootnote}{$\star$}

\newcommand{\arXivNumber}{1601.07303}

\renewcommand{\PaperNumber}{066}

\FirstPageHeading

\ShortArticleName{Periodic GMP Matrices}

\ArticleName{Periodic GMP Matrices\footnote{This paper is a~contribution to the Special Issue on Orthogonal Polynomials, Special Functions and Applications. The full collection is available at \href{http://www.emis.de/journals/SIGMA/OPSFA2015.html}{http://www.emis.de/journals/SIGMA/OPSFA2015.html}}}

\Author{Benjamin EICHINGER}

\AuthorNameForHeading{B.~Eichinger}

\Address{Institute for Analysis, Johannes Kepler University, Linz, Austria}
\Email{\href{mailto:benjamin.eichinger@jku.at}{benjamin.eichinger@jku.at}}

\ArticleDates{Received January 28, 2016, in f\/inal form June 29, 2016; Published online July 07, 2016}

\Abstract{We recall criteria on the spectrum of Jacobi matrices such that the corresponding isospectral torus consists of periodic operators. Motivated by those known results for Jacobi matrices, we def\/ine a new class of operators called GMP matrices. They form a certain \textbf{G}eneralization of matrices related to the strong \textbf{M}oment \textbf{P}roblem. This class allows us to give a parametrization of \emph{almost periodic} f\/inite gap Jacobi matrices by \emph{periodic} GMP matrices. Moreover, due to their structural similarity we can carry over numerous results from the direct and inverse spectral theory of periodic Jacobi matrices to the class of periodic GMP matrices. In particular, we prove an analogue of the remarkable ``magic formula'' for this new class.}

\Keywords{spectral theory; periodic Jacobi matrices; bases of rational functions; functional models}

\Classification{30E05; 30F15; 47B36; 42C05; 58J53}

\renewcommand{\thefootnote}{\arabic{footnote}}
\setcounter{footnote}{0}

\section{Introduction}
We start by recalling some known facts from the spectral theory of Jacobi matrices; see \cite[Chapter~5]{SimonSzego}. Let $\d\sigma_+$ be a real scalar compactly supported measure and $\{P_n(x)\}_{n\geq 0}$ the corresponding orthonormal polynomials, which we obtain by orthonormalizing the monomials
\begin{gather*}
	1,\,x,\,x^2,\,\dots.
\end{gather*}
It is easy to see that they obey
\begin{gather*}
	xP_n(x)=a_nP_{n-1}(x)+b_nP_n(x)+a_{n+1}P_{n+1}(x),\qquad a_n>0,
\end{gather*}
that is, the multiplication by the independent variable in the basis $\{P_n(x)\}_{n\geq 0}$ has the matrix
\begin{gather*}
	J_+=\left[
	\begin{matrix}
		 b_0 & a_1 &0& \\
		a_1 & b_1& a_2 & \\
		0 & \ddots & \ddots & \ddots \\
		& &\ddots&\ddots
	\end{matrix}
	\right],
\end{gather*}
where $|a_n|,|b_n|\leq C$ for $C$ such that $\d\sigma_+$ has support $[-C,C]$.
Matrices of this sort are called one-sided Jacobi matrices. In general, we call an operator one-sided if it is an operator on $\ell_+^2=\ell^2(\bbZ_{\geq 0})$ and correspondingly two-sided if it acts on $\ell^2=\ell^2(\bbZ)$. Moreover, let $\{e_n\}_{n\in\bbZ}$ denote the standard basis of $\ell^2$ and $\ell_-^2=\ell^2\ominus\ell^2_+$ with the classical embedding of~$\ell_+^2$ into $\ell^2$. By
\begin{gather*}
	\big\langle(J_+-z)^{-1}e_0,e_0\big\rangle=\int\frac{\d\sigma_+(x)}{x-z},
\end{gather*}
one can associate to every one-sided Jacobi matrix a measure $\d\sigma_+$ and in fact, this describes a one-to-one correspondence between real scalar compactly supported measures and one-sided (bounded) Jacobi matrices. To deal with periodic Jacobi matrices (i.e., Jacobi matrices with periodic coef\/f\/icient sequences) it appears to be useful to extend them naturally to two-sided Jacobi matrices and in the following we will derive another basis, which turned out to be more suitable in their spectral theory than polynomials. This technique applied to ref\/lectionless Jacobi matrices with homogeneous spectra was suggested by Sodin and Yuditskii~\cite{SoYud94}. Let~$J_+$ be a periodic Jacobi matrix, $J$ its two-sided extension and $J_-=P_- JP_-^*$, where~$P_-$ denotes the orthogonal projection onto~$\ell_-^2$. One can show that there exists a polynomial,~$T_p$, of degree~$p$ such that the spectrum, $E$, of $J$ is given by
 \begin{gather}\label{eq:PolynomialJacobi}
	1)\ E=T_p^{-1}([-2,2]),
\end{gather}
2) all critical points of $T_p$ (i.e., zeros of $T_p'$) are real and 3) $|T_p(c)|\geq2$ for all critical points~$c$; cf.~\cite[Theorem~5.5.25]{SimonSzego}. We def\/ine the resolvent functions by
\begin{gather*}
r_-^J(z)=\big\langle (J_--z)^{-1}e_{-1},e_{-1}\big\rangle,\qquad	r^J_+(z)=\big\langle (J_+-z)^{-1}e_{0},e_{0}\big\rangle,\qquad z\in\bbC_+.
\end{gather*}
In the periodic case they can be given explicitly in terms of the orthogonal polynomials and they satisfy
\begin{gather}\label{def:reflectionless}
\frac{1}{r^J_+(x+i0)}=a_0^2\overline{r^J_-(x+i0)},\qquad \text{for almost all} \ \ x\in E.
\end{gather}
If the resolvent functions of a two-sided Jacobi matrix satisf\/ies~\eqref{def:reflectionless} on a set~$A$, we call it ref\/lectionless on~$A$. This property is characteristic in the following sense. Let us def\/ine for the given set $E=T_p^{-1}([-2,2])$ the isospectral torus (f\/inite gap class) of Jacobi matrices, $J(E)$, by
\begin{gather}\label{def:isospectral}
J(E)=\{J\colon \sigma(J)=E\text{ and } J \text{ is ref\/lectionless on }E\}.
\end{gather}
Then $J(E)$ consists of all periodic Jacobi matrices, whose spectrum is the set~$E$. The following well-known parametrization justif\/ies the name torus:
\begin{gather}\label{eq:parametrizationJacobi}
J(E)=\big\{J(\alpha)\colon \alpha\in\bbR^g/\bbZ^g\big\},
\end{gather}
where the map $\alpha\mapsto J(\alpha)$ is one-to-one; cf.~\cite{Akh60,Aptek84,Kri78,JacobiTeschl}. We recall a proof of this fact in Section~\ref{sec:fm} based on the previously mentioned second basis. The idea in this construction is the following: Let~$\Gamma^*$ be the group of all characters of the fundamental group of the domain $\overline{\bbC}\setminus E$. Note that~$\Gamma^*\cong \bbR^g/\bbZ^g$. Due to the properties of $T_p$, one can def\/ine a function~$\Phi(z)$ by
\begin{gather}\label{eq:MagicFormulafm}
	T_p(z)=\Phi(z)+\frac{1}{\Phi(z)},\qquad z\in \overline{\bbC}\setminus E.
\end{gather}
 This is possible since $T_p(z)\in \overline{\bbC}\setminus [-2,2]$ for $z\in \overline{\bbC}\setminus E$, which is the image of the Joukowski map $\zeta\mapsto\zeta+\frac{1}{\zeta}$. Fix $\alpha\in \Gamma^*$ and let $H^2(\alpha)$ be the Hardy space of character automorphic functions on $\overline{\bbC}\setminus E$ with character $\alpha$ and $L^2(\alpha)$ the corresponding space of character automorphic square integrable functions. The decomposition
\begin{gather*}
	H^2(\alpha)=K_\Phi(\alpha)\oplus\Phi H^2(\alpha)
\end{gather*}
def\/ines a natural basis $\{e_n^\alpha\}_{n=0}^\infty$ of $H^2(\alpha)$ and respectively $\{e_n^\alpha\}_{n=-\infty}^\infty$ a basis of $L^2(\alpha)$ with the properties that $e_{n+p}^\alpha=\Phi e_{n}^\alpha$ and $K_\Phi(\alpha)=\{e_0^\alpha,\dots,e_{p-1}^\alpha\}$. The multiplication by $z$ in the basis $\{e_n^\alpha\}_{n=-\infty}^\infty$ is a Jacobi matrix~$J(\alpha)$. Moreover, if $\alpha$ runs through~$\Gamma^*$ we obtain~$J(E)$, i.e., this construction proves~\eqref{eq:parametrizationJacobi}. Note that $e_{n+p}^\alpha=\Phi e_{n}^\alpha$ means that $\Phi$ is the symbol of $S^p$, where $S$ is the right shift on~$\ell^2$, i.e., $Se_n=e_{n+1}$. Since $z\Phi(z)=\Phi(z)z$ implies $J(\alpha)S^p=S^pJ(\alpha)$, the existence of the function $\Phi$ shows the periodicity of the Jacobi matrices~$J(\alpha)$. Finally, we would like to point out that the relation~\eqref{eq:MagicFormulafm} is the so-called magic formula in terms of the functional model, which is a surprising characterization of the isospectral torus of periodic Jacobi matrices. It states that
\begin{gather*}
	J\in J(E)\iff T_p(J)=S^p+S^{-p}.
\end{gather*}

We have seen that spectra of periodic Jacobi matrices are a f\/inite union of intervals of a very special structure. Namely, there has to be a polynomial $T_p$ with the properties~1),~2) and~3). In fact, it is not hard to show that this condition is also suf\/f\/icient. Nevertheless, the isospectral torus \eqref{def:isospectral} can be def\/ined for more general sets, in particular for so-called f\/inite gap sets,~$E$, of the form
\begin{gather}\label{def:spectralSet}
E=[\mathbf b_0,\mathbf a_0]\setminus\bigcup_{j=1}^g(\mathbf a_j,\mathbf b_j),\qquad g\in\bbN,\qquad \ba_j<\bb_j<\ba_{j+1}.
\end{gather}
The corresponding bases $\{e_n^\alpha\}$ can be def\/ined as well, which leads exactly in the same way to a~parametrization of $J(E)$ by $\Gamma^*$. Moreover, it gives explicit formulas for the coef\/f\/icients of $J(\alpha)$ by means of continuous functions $\cA$, $\cB$ on $\Gamma^*$, i.e.,
\begin{gather}\label{eq:coefficientsIsospectral}
a_n(\alpha)=\cA(\alpha-n\mu),\quad b_n(\alpha)=\cB(\alpha-n\mu),
\end{gather}
where $\mu$ is a f\/ixed character def\/ined only by the spectrum. This in particular implies that all elements of $J(E)$, which may not be periodic, are for sure almost periodic. Note that~\eqref{eq:coefficientsIsospectral} coincides with the formulae given in~\cite[Theorem~9.4]{JacobiTeschl}. In fact, this holds even for Jacobi mat\-rices with inf\/inite gap, homogeneous spectra. Hardy classes on f\/initely or inf\/initely connected domains are discussed, e.g., in~\cite{Has83,Heins69,Wid71}.

Since it is based on the existence of a polynomial, $T_p$, with the properties~1),~2) and~3), the characterization of the isospectral torus in terms of the magic formula is only possible for spectra of periodic Jacobi matrices, which turned out to be a powerful tool in the past. It was crucial in proving the f\/irst generalization of the remarkable Killip--Simon theorem; cf.~\cite{KillipSimon,KS03}. More specif\/ically, Damanik, Killip and Simon~\cite{KillipSimon} were able to generalize the Killip--Simon theorem for the case that $E$ is the spectrum of periodic Jacobi matrices and $|T_p(c)|>2$ for all critical points~$c$, which is a strong restriction on~$E$. The idea of GMP matrices is to substitute the polyno\-mial~$T_p$ by a~rational function. In~\cite{SMP}, we carried out this idea for the simplest case, namely if~$E$ is the arbitrary union of two distinct intervals. Note that the Damanik, Killip and Simon theorem only covers two intervals of equal length. Nevertheless, we can always f\/ind a~rational function,~$\Delta_E$, of the form
\begin{gather}\label{eq:DeltaOneInterval}
\Delta_E(z)=\lambda_0z+\bc_0+\frac{\lambda_1}{\bc_1-z},\qquad	\lambda_0,\lambda_1>0,
\end{gather}
such that $\Delta_E^{-1}([-2,2])=E$. By a linear change of variable we may assume that $\bc_1=0$. This suggests to consider matrices obtained by orthonormalizing the family of functions
\begin{gather}\label{eq:laurentpolynomials}
1,\,-\frac{1}{x},\,x,\,\frac{(-1)^2}{x^2},\,\dots,
\end{gather}
for a given real compactly supported measure~$\d\sigma_+$. Denoting this basis by~$\varphi_n$, we call the matrix of multiplication by the independent variable w.r.t.\ this basis a one-sided SMP matrix; see also~\cite{Kats16}. Let us mention that they are also called Jacobi-Laurent matrices; cf.~\cite{HeNi00}.

The connection to CMV matrices should not go unmentioned. CMV matrices are the Jacobi matrix analogue for measures supported on the unit circle. Already Szeg\H o discussed orthogonal polynomials,~$\psi_n$, w.r.t.\ a measure,~$\d\mu$, supported on the unit circle and showed that there are constants $\{\alpha_n\}_{n=0}^\infty$ in~$\bbD$, called Verblunsky coef\/f\/icients, so that
\begin{gather*}
	\sqrt{1-|\alpha_n|^2}\psi_{n+1}(z)=z\psi_n(z)-\overline{\alpha_n}z^n\overline{\psi_n(1/\overline{z})}.
\end{gather*}
 Due to Verblunsky~\cite{Verb35}, who def\/ined them in another context, the map $\d\mu\mapsto\{\alpha_n\}$ is one-to-one and onto all of~$\bbD^\infty$. Recent developments are due to Cantero, Moral and Vel\'azques~\cite{CanMoVel03}. They considered bases obtained by orthonormalizing families of the sort~\eqref{eq:laurentpolynomials} and showed that the matrix of the multiplication operator is a special structured f\/ive-diagonal matrix. For a given measure,~$\d\mu$, the entries can be given in terms of the Verblunsky coef\/f\/icients. Recognizing this characteristic structure they could use it to give a constructive def\/inition of CMV matrices, which uniquely def\/ined them, in the sense that there is a one-to-one correspondence between measures and CMV matrices. For a review on CMV matrices see~\cite{SimonCMVreview}.

In \cite{SMP}, we were also able to identify this characteristic structure for SMP matrices and to give a constructive def\/inition of them. Again, it was then more convenient for us to def\/ine them as two-sided matrices. Roughly speaking, a SMP matrix $A$ and its shifted inverse $-S^{-1}A^{-1}S$ (note that we assumed that $0$ is not in the spectrum) are f\/ive-diagonal matrices such that all even entries on the most outer diagonal vanish and the odd ones are positive. This structure perfectly f\/its to the following ``generalized magic formula'':
\begin{Proposition}[\cite{SMP}]
	Let $E$ be an arbitrary union of two intervals around zero and $\Delta_E$ the corresponding rational function of \eqref{eq:DeltaOneInterval}. Moreover, let $A(E)$ be the set of all two-periodic SMP matrices with its spectrum on $E$. Then
	\begin{gather*}
	A\in A(E) \ \gdw \ \Delta_E(A)=S^2+S^{-2}.
	\end{gather*}
\end{Proposition}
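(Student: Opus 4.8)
The plan is to mirror the polynomial "magic formula" $J\in J(E)\iff T_p(J)=S^p+S^{-p}$ in the rational setting, so I would first set up the functional-model analogue of the factorization $T_p=\Phi+1/\Phi$. Recall that $\Delta_E^{-1}([-2,2])=E$ and $\Delta_E$ maps $\overline{\bbC}\setminus E$ into $\overline{\bbC}\setminus[-2,2]$, which is precisely the image of the Joukowski map. Hence there is a character-automorphic function $\Phi$ on $\overline{\bbC}\setminus E$ with $\Delta_E(z)=\Phi(z)+\Phi(z)^{-1}$. The crucial structural point is that now $\deg\Delta_E=2$ in the appropriate sense (one pole at $0$, one pole at $\infty$), so $\Phi$ should be the symbol of $S^2$ rather than $S^p$: the basis $\{e^\alpha_n\}$ adapted to SMP matrices must satisfy $e^\alpha_{n+2}=\Phi\,e^\alpha_n$, which forces the shift by $2$ on the index.

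\emph{The forward direction} ($\Rightarrow$) I would prove exactly as in the Jacobi case. Fix $\alpha\in\Gamma^*$ and let $A(\alpha)$ be the multiplication by $z$ in the basis $\{e^\alpha_n\}$. Since multiplication operators commute, $z\Phi(z)=\Phi(z)z$ gives $A(\alpha)S^2=S^2A(\alpha)$, i.e.\ $A(\alpha)$ is $2$-periodic. Then one evaluates $\Delta_E(A(\alpha))$ as the multiplication operator by $\Delta_E(z)=\Phi(z)+\Phi(z)^{-1}$ in the same basis. Because $\Phi$ acts as $S^2$ (so $\Phi\,e^\alpha_n=e^\alpha_{n+2}$) and $\Phi^{-1}$ acts as $S^{-2}$, the operator of multiplication by $\Phi+\Phi^{-1}$ is precisely $S^2+S^{-2}$, yielding $\Delta_E(A)=S^2+S^{-2}$. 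The only care needed is that $\Delta_E$ has a pole, so $\Delta_E(A)$ involves $A^{-1}$; this is where the hypothesis that $0\notin\sigma(A)$ (equivalently $E$ surrounds $0$) is used to make $A^{-1}$ bounded and the functional calculus legitimate.

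\emph{The reverse direction} ($\Leftarrow$) is the one I expect to be the main obstacle, and I would argue as in the proof that \eqref{eq:parametrizationJacobi} characterizes $J(E)$. Suppose $A$ is a two-periodic SMP matrix with $\Delta_E(A)=S^2+S^{-2}$. First, the identity $\Delta_E(A)=S^2+S^{-2}$ together with the spectral mapping theorem pins down $\sigma(A)$: applying $\Delta_E$ sends the spectrum into $\sigma(S^2+S^{-2})=[-2,2]$, and conversely $\Delta_E^{-1}([-2,2])=E$ forces $\sigma(A)\subseteq E$; two-periodicity and the SMP structure give the reverse inclusion so that $\sigma(A)=E$. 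Second, one must extract reflectionlessness on $E$ from the magic formula. This is the delicate step: I would use the resolvent/$m$-function identities for SMP matrices analogous to \eqref{def:reflectionless}, showing that $\Delta_E(A)=S^2+S^{-2}$ forces the symmetry relation between $r^A_+$ and $r^A_-$ on $E$ that defines reflectionlessness. Since $A(E)$ is defined as two-periodic SMP matrices with spectrum $E$ (which by the isospectral-torus theory for SMP matrices are automatically reflectionless on $E$), matching $\sigma(A)=E$ with two-periodicity places $A\in A(E)$.

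\emph{The key technical input} throughout, and where the real work lies, is establishing that the SMP functional model produces a basis with $e^\alpha_{n+2}=\Phi\,e^\alpha_n$ and that the multiplication operator has the claimed five-diagonal SMP structure; granting the model from \cite{SMP}, the equivalence reduces to the two spectral-theoretic computations above. I would therefore state the functional-model facts from \cite{SMP} as the engine and present the proof as a translation of the Jacobi argument, flagging the pole of $\Delta_E$ (hence the role of $-S^{-1}A^{-1}S$ in the SMP definition) as the only genuinely new ingredient compared with the polynomial case.
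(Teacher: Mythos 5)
Your forward direction coincides with the paper's proof of the general magic formula (Theorem~\ref{thm:magicFormula}, of which this Proposition is the $g=1$ case): one realizes $A\in A(E)$ through the functional model, where $\Delta_E=\Phi+1/\Phi$ with $\Phi$ (the Ahlfors function) acting as the symbol of $S^2$ on the basis with $e^\alpha_{n+2}=\Phi e^\alpha_n$, so $\Delta_E(A)=S^2+S^{-2}$ follows at once. That part of your proposal is sound and is exactly the paper's route.

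The reverse direction, however, has a genuine gap, in two places. First, you \emph{assume} that $A$ is two-periodic, but periodicity is part of what must be proved: the right-hand side of the equivalence quantifies over all SMP matrices, and $A(E)$ is by definition the set of \emph{two-periodic} SMP matrices with spectrum $E$. The paper derives periodicity from the magic formula itself via Na\u{\i}man's lemma (cf.~\cite[Lemma~8.2.4]{SimonSzego}): $A$ commutes with $\Delta_E(A)=S^2+S^{-2}$, and the lemma upgrades this commutation to $AS^2=S^2A$. Second, your spectral argument only yields $\sigma(A)\subseteq E$; the spectral mapping theorem gives $\Delta_E(\sigma(A))=[-2,2]$, but a proper closed subset of $E$ can map onto $[-2,2]$, so equality does not follow, and ``two-periodicity and the SMP structure give the reverse inclusion'' is an assertion, not an argument. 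The paper closes this by discriminant theory rather than by extracting reflectionlessness (which is a red herring here, since $A(E)$ is defined by periodicity and spectrum alone): once $A$ is periodic, it lies in $A(\sigma(A),\bC)$ by definition, so the already-proved forward direction gives $\Delta^A(A)=S^2+S^{-2}=\Delta_E(A)$, where $\Delta^A=\tr\fA$ is the discriminant; comparing the unique non-vanishing summand in each entry of the outermost diagonal (the mechanism of Lemma~\ref{lem:deltaAequalDelta}) forces the residues and leading coefficient of $\Delta^A$ and $\Delta_E$ to agree, hence $\Delta^A=\Delta_E$ by the uniqueness statement of Lemma~\ref{lem:Delta}, and then $\sigma(A)=\{z\colon \Delta^A(z)\in[-2,2]\}=E$ by Theorem~\ref{thm:spectrumOfperiodicA}. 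Without the Na\u{\i}man step and the identification $\Delta^A=\Delta_E$, your reverse implication does not go through.
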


To deal with arbitrary f\/inite gap sets of the form~\eqref{def:spectralSet}, one f\/irst has to generalize~\eqref{eq:DeltaOneInterval}, which is done by the following lemma.

The Ahlfors function, $\Psi$, of the domain~$\overline{\bbC}\setminus E$ is the function that maximizes the value $\operatorname{Cap}_\text{a}(E)=\big|\lim\limits_{z\to\infty}z\Psi(z)\big|$ (the so-called analytic capacity), among all functions, which vanish at inf\/inity and are bounded by one in modulus.
\begin{Lemma}\label{lem:Delta}
	The function
	\begin{gather}\label{eq:DeltaAhlfors}
	\Delta_E(z):=\Psi(z)+\frac{1}{\Psi(z)}
	\end{gather}
	is a rational function of the form
	\begin{gather}\label{eq:14}
	\Delta_E(z)=\lambda_0z+\bc_0+\sum_{j=1}^g\frac{\lambda_j}{\bc_j-z},
	\end{gather}
	with $\lambda_j>0$, $j\ge 0$, $\bc_j\in(\ba_j,\bb_j)$, $j\ge 1$ and
	\begin{gather}\label{eq13}
	E=[\bb_0,\ba_0]\setminus\bigcup_{j=1}^g(\ba_j,\bb_j)=\Delta_E^{-1}([-2,2]).
	\end{gather}
	In fact, if we demand that $\Im \Delta_E(z)>0$ for $\Im z>0$ and $\lim\limits_{z\to\infty}\Delta_E(z)=\infty$, $\Delta_E$ is the unique rational function with the property \eqref{eq13}.
\end{Lemma}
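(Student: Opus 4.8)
The plan is to transport the structural information from the Ahlfors function $\Psi$ of $\overline{\bbC}\setminus E$ to $\Delta_E$ through the Joukowski map $w\mapsto w+\frac{1}{w}$, and then read off the form \eqref{eq:14} from a Herglotz (Nevanlinna) representation. First I would record the properties of $\Psi$ that I intend to use, normalizing so that $\lim_{z\to\infty}z\Psi(z)=\operatorname{Cap}_{\mathrm a}(E)>0$: the function $\Psi$ is a proper holomorphic map of $\overline{\bbC}\setminus E$ onto $\bbD$ whose degree equals the number $g+1$ of boundary components, it is real-symmetric ($\overline{\Psi(\bar z)}=\Psi(z)$), it satisfies $|\Psi|<1$ on $\overline{\bbC}\setminus E$ with boundary values $|\Psi|=1$ on $E$, and its $g+1$ zeros are simple, one at $\infty$ and the remaining $g$ real.

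The first genuine step is to show that $\Delta_E$ extends to a rational function. On $\overline{\bbC}\setminus E$ it is meromorphic with poles exactly at the zeros of $\Psi$. Across the slit $E$ I would argue by Schwarz-type reflection: since $|\Psi|=1$ on $E$ we have $1/\Psi=\overline{\Psi}$ there, so the boundary value of $\Delta_E$ from the upper side equals $2\Re\Psi$ and is real, and by real symmetry the boundary value from the lower side agrees with it. Continuous, matching boundary values from the two sides give, by Morera's theorem, a holomorphic continuation across the interior of each band, while the band endpoints are removable since $\Delta_E$ stays bounded ($\Psi\to\pm1$) there. Hence $\Delta_E$ is meromorphic on $\overline{\bbC}$, i.e.\ rational, with poles only at $\infty$ and the finite zeros $\bc_1,\dots,\bc_g$ of $\Psi$; near $\infty$ the normalization gives $\Psi(z)\sim \operatorname{Cap}_{\mathrm a}(E)/z$, hence $\Delta_E(z)\sim z/\operatorname{Cap}_{\mathrm a}(E)$, a simple pole with positive leading coefficient $\lambda_0$.

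To fix the precise shape \eqref{eq:14} with the correct signs I would prove that $\Delta_E$ is Herglotz, i.e.\ $\Im\Delta_E>0$ on $\bbC_+$. Since $\bbC_+\subset\overline{\bbC}\setminus E$ and $\Delta_E$ is real on $\bbR$ (real on the gaps and rays because $\Psi$ is real there, and in $[-2,2]$ on $E$), the harmonic function $\Im\Delta_E$ vanishes on $\partial\bbC_+$ away from the poles; together with the computation that $w\mapsto w+\frac{1}{w}$ maps the lower half-disc into $\bbC_+$ and with the leading behaviour $\Delta_E\sim z/\operatorname{Cap}_{\mathrm a}(E)$, this forces $\Im\Delta_E>0$ throughout $\bbC_+$. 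A rational Herglotz function with a pole at $\infty$ has exactly the partial-fraction form $\lambda_0 z+\bc_0+\sum_j \lambda_j/(\bc_j-z)$ with $\lambda_0,\lambda_j>0$ and all $\bc_j\in\bbR$; in particular the finite zeros of $\Psi$ are real, which is \eqref{eq:14}. The identity \eqref{eq13} then follows from the Joukowski dictionary: for $z\in E$, $|\Psi(z)|=1$ gives $\Delta_E(z)=2\Re\Psi(z)\in[-2,2]$, while for $z\in\overline{\bbC}\setminus E$ either $z$ is a pole or $\Psi(z)\in\bbD\setminus\{0\}$, and since $w\mapsto w+\frac{1}{w}$ maps $\bbD\setminus\{0\}$ bijectively onto $\overline{\bbC}\setminus[-2,2]$ we get $\Delta_E(z)\notin[-2,2]$. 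Locating the poles is now elementary: a Herglotz function of the above form is strictly increasing on each interval of $\bbR$ between consecutive poles, so $\Delta_E^{-1}([-2,2])\cap\bbR$ is one closed interval in each of the $g+1$ intervals cut out by $\{\bc_1,\dots,\bc_g,\infty\}$; as this preimage equals $E$, its $g+1$ bands are separated by exactly one pole each, forcing $\bc_j\in(\ba_j,\bb_j)$.

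For the concluding uniqueness statement I would run the argument backwards: given any rational $\widetilde\Delta$ with $\Im\widetilde\Delta>0$ on $\bbC_+$, $\widetilde\Delta(\infty)=\infty$ and $\widetilde\Delta^{-1}([-2,2])=E$, inverting the Joukowski map on $\overline{\bbC}\setminus E$ produces a proper map $\widetilde\Psi$ onto $\bbD$ of degree $g+1$ with $\widetilde\Psi(\infty)=0$ and $|\widetilde\Psi|=1$ on $E$, whose identification with the Ahlfors function $\Psi$ yields $\widetilde\Delta=\Delta_E$. I expect the main obstacle to be the Herglotz step of the third paragraph — equivalently the determination of the residue signs $\Res_{\bc_j}\Delta_E=1/\Psi'(\bc_j)<0$ — since this rests on pinning down the orientation with which the proper map $\Psi$ sends the bands onto $\partial\bbD$ (that $\Im\Psi\le0$ along the upper edges of the bands), which is precisely what the normalization $\lim_{z\to\infty}z\Psi(z)>0$ must be used to fix; a secondary delicate point is justifying that the back-constructed $\widetilde\Psi$ is single-valued and extremal, so that it truly coincides with $\Psi$.
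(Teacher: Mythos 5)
Your first two steps (reflection across the bands via $1/\Psi=\overline{\Psi}$ on $E$ and Morera, giving rationality of $\Delta_E$ with poles at the zeros of $\Psi$ and at $\infty$) are sound, and your overall route — rationality plus a Herglotz representation — is genuinely different from the paper's. But the crux of your argument, the Herglotz step, is not actually proved, and the sketch you give for it would fail as stated. Knowing that $\Im\Delta_E$ is harmonic on $\bbC_+$, vanishes on $\bbR$ away from the poles, and is positive near $\infty$ does \emph{not} force $\Im\Delta_E>0$ on $\bbC_+$: the sign of $\Im\Delta_E$ near a real pole $\bc_j$ is exactly $-\sgn\big(\Res_{\bc_j}\Delta_E\big)\cdot\sgn(\Im z)$, i.e.\ it is governed by the very residue signs you are trying to determine. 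The function $z+1/z$ illustrates the failure mode: it is real on $\bbR\setminus\{0\}$, behaves like $z$ at infinity, yet $\Im\big(z+1/z\big)=\Im z\,\big(1-|z|^{-2}\big)$ changes sign in $\bbC_+$ because the residue at $0$ has the ``wrong'' sign. Similarly, the observation that $w\mapsto w+1/w$ maps the lower half-disc into $\bbC_+$ only helps if you already know $\Psi(\bbC_+)\subset\bbD\cap\bbC_-$, which is equivalent to the conclusion; and note the latent circularity of ``recording'' in your first paragraph that the finite zeros of $\Psi$ are simple and real, a fact you later claim to \emph{derive} from the Herglotz form. You flag this obstacle honestly, but flagging it is not closing it, and the orientation analysis of the proper map $\Psi$ that you gesture at is the hard content of the lemma.

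The paper closes precisely this gap with one citation: by Pommerenke, $\frac{1-\Psi(z)}{1+\Psi(z)}=G(z):=\sqrt{\prod_{j=0}^{g}\frac{z-\ba_j}{z-\bb_j}}$, so $\Delta_E=2\,\frac{1+G^2}{1-G^2}$ is \emph{explicitly} rational; $\Im G^2>0$ on $\bbC_+$ (as in Levin) transfers to $\Delta_E$ under the M\"obius map, giving the Herglotz property, reality of the poles, and $\lambda_j>0$ at once, while monotonicity of $G^2$ from $0$ at $\ba_j$ to $\infty$ at $\bb_j$ locates exactly one pole per gap and $G^2(\infty)=1$ gives the pole at infinity. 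Your uniqueness paragraph has a second genuine gap of the same flavor: the back-constructed $\widetilde\Psi$ is merely \emph{some} real-symmetric proper map of degree $g+1$ vanishing at $\infty$, whose gap zeros are a priori different from $\bc_1,\dots,\bc_g$, and identifying it with the Ahlfors function is an extremality/uniqueness statement about proper maps that you do not prove and that is essentially as hard as the lemma itself. The paper's uniqueness argument stays entirely inside rational functions and avoids the Ahlfors function altogether: the argument principle excludes poles in $\bbC_+$, which with $\Delta(\infty)=\infty$ forces the form \eqref{eq:14}; then $\Delta'>0$ on $\bbR\setminus\{\bc_1,\dots,\bc_g\}$ together with $\Delta^{-1}([-2,2])=E$ pins down the $2g+2$ interpolation conditions $\Delta(\ba_j)=2$, $\Delta(\bb_j)=-2$, which determine $\Delta$ uniquely. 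Adopting that elementary interpolation argument would repair your last paragraph without touching the function theory of the domain.
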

\begin{proof}
	Due to \cite{Pom1}, we have
	\begin{gather*}
	\frac{1-\Psi(z)}{1+\Psi(z)}=\sqrt{\prod_{j=0}^{g}\frac{z-\ba_j}{z-\bb_j}}=:G(z).
	\end{gather*}
	Therefore,
	\begin{gather*}
	\Delta_E(z):=\Psi(z)+\frac{1}{\Psi(z)}=2\frac{1+G^2(z)}{1-G^2(z)},
	\end{gather*}
	is of the form $P_{g+1}(z)/Q_m(z)$, where $m\leq g$. Like in \cite[Chapter VII]{Lev80}, we see that $\Im G^2(z)>0$ for $\Im z>0$, which then clearly also holds for $\Delta_E$. Therefore, $G^2$ is increasing on the interval $(\ba_j,\bb_j)$ and has a zero at $\ba_j$ and a pole at $\bb_j$. Hence, there is exactly one pole of $\Delta_E$ in each gap. Since $G^2(\infty)=1$, there is also a pole at inf\/inity.
	To prove uniqueness let $\Delta(z)$ be a rational function with the claimed properties. First, we notice that, due to the argument principle, it is not possible that $\Delta$ has a pole in the upper half plane. This and $\lim\limits_{z\to\infty}\Delta(z)=\infty$ already implies that $\Delta$ is of the form~\eqref{eq:14}. Since $\Delta'(x)>0$ on $\bbR\setminus\{\bc_1,\dots,\bc_g\}$, $\Delta^{-1}([-2,2])=E$ implies
	\begin{gather*}
	\Delta(\ba_j)=2 \qquad\text{and}\qquad \Delta(\bb_j)=-2\qquad \text{for} \ \ j\geq 0.
	\end{gather*}
	This def\/ines $\Delta$ uniquely.
\end{proof}

Let $\bC=\{\bc_1,\dots,\bc_g\}$ be a collection of distinct real points and $\d\sigma_-$ a measure such that the points $\bc_k$ don't belong to its support. Like SMP matrices, we def\/ine a one-sided GMP mat\-rix,~$A_-$, as the matrix of the multiplication operator w.r.t.\ the basis obtained by orthonormalizing the family of functions
\begin{gather*}
1,\,\frac{1}{\bc_g-x},\,\frac{1}{\bc_{g-1}-x},\,\dots, \,\frac{1}{\bc_1-x},\,x,\,\frac{1}{(\bc_g-x)^2},\,\dots.
\end{gather*}
This is discussed in detail in the Appendix of~\cite{YudKillipSimon}; see also~\cite{KupBar11}. Their characteristic structure, which looks quite complicated at the f\/irst glance, will be used in the following def\/inition, but f\/irst we would like to point out another property of multiplication operators w.r.t.\ rational functions. Namely, since~$\bc_k$ does not belong to the support of the measure, we can also consider multiplication by~$\frac{1}{\bc_k-x}$. Hence, if the f\/irst block of~$A_-$ corresponds to the basis
\begin{gather}\label{eq:gmpcyclic}
\left[1,\frac{1}{\bc_1-x},\dots,\frac{1}{\bc_g-x}\right]
\end{gather}
then the linear change of variable $y=\frac{1}{\bc_k-x}$ leads to the basis related to
\begin{gather}\label{eq:gmpcyclicShifted}
\left[-\frac{1}{y},\frac{1}{y(\bc_1)-y},\dots,1,\dots,\frac{1}{y(\bc_g)-y} \right],
\end{gather}
which says that the shifted resolvents should be of the same shape. In fact, in the construction the spaces~\eqref{eq:gmpcyclic} and~\eqref{eq:gmpcyclicShifted} serve as cyclic subspaces for $\Delta(x)$ and $\tilde \Delta(y)=\Delta(x)$, respectively. See also proof of Theorem~\ref{prop:isospectralGMP}. The structure of $A_-$ and this certain invariance property of the resolvents is now used as a def\/inition for two-sided GMP matrices.

By $T^*$ we denote the conjugated operator to an operator $T$, or the conjugated matrix if $T$ is a matrix. In particular, for a column vector $\vp\in \bbC^{g+1}$, $(\vp)^*$ is a $(g+1)$-dimensional row vector. The notation $T^-$ denotes the upper triangular part of a matrix $T$ (excluding the main diagonal), and $T^+=T-T^-$ is its lower triangular part (including the main diagonal).

First of all, the GMP class depends on an ordered collection of distinct points $\bC\!=\!\{\bc_1,{\dots},\bc_g\}$.
\begin{Definition}
	We say that $A$ is of the class $\bbA$ if it is a $(g+1)$-block Jacobi matrix
	\begin{gather*}
	A=\begin{bmatrix}
	\ddots&\ddots&\ddots&& &\\
	&A^*(\vp_{-1})&B(\vbp_{-1})&A(\vp_0)& & \\
	& &A^*(\vp_{0})&B(\vbp_{0})&A(\vp_1)& \\
	& & &\ddots&\ddots&\ddots
	\end{bmatrix}
	\end{gather*}
	such that
	\begin{gather*}
	\vbp=(\vp,\vq\,)\in\bbR^{2g+2},\qquad A(\vp)=\delta_g \vp\,^*,
	\qquad
	B(\vbp)
	=(\vq \vp\,^*)^-+(\vp\vq\,^*)^++\tilde\bC,
	\end{gather*}
	and
	\begin{gather*}
	\tilde \bC=\begin{bmatrix}
	\bc_1& & & \\
	& \ddots& & \\
	& & \bc_g & \\
	& & &0
	\end{bmatrix},\qquad
	\vp_j=
	\begin{bmatrix}
	p^{(j)}_0\\
	\vdots\\
	p^{(j)}_g
	\end{bmatrix}, \qquad
	\vq_j=
	\begin{bmatrix}
	q^{(j)}_0\\
	\vdots\\
	q^{(j)}_g
	\end{bmatrix}, \qquad p^{(j)}_g>0.
	\end{gather*}
	We call $\{\vbp_j\}_{j\in\bbZ}$ the generating coef\/f\/icient sequences (for the given~$A$).
\end{Definition}
\begin{Definition}\label{def:GMP}
	A matrix $A\in\bbA$ belongs to the GMP class if the matrices $\{\bc_k-A\}_{k=1}^g$, for $1\leq k\leq g$, are invertible, and moreover
	$S^{-k}(\bc_k-A)^{-1}S^k$ are also of the class $\bbA$. To abbreviate we write $A\in \GMP(\bC)$.
\end{Definition}
We call a GMP matrix one-block periodic or simply periodic if $\vbp_j=\vbp$ for all $j\in\bbZ$.

Thus, we pay a quite high price in giving up the simple structure of Jacobi matrices, but in return we get~\eqref{eq:DeltaAhlfors}, which will in particular allow us to parametrize the f\/inite gap class of almost periodic Jacobi matrices by periodic GMP matrices. In the same way as for Jacobi matrices, the decomposition
\begin{gather*}
	H^2(\alpha)=K_{\Psi}(\alpha)\oplus\Psi H^2(\alpha),
\end{gather*}
leads to a new basis $\{f_n^\alpha\}_{n=-\infty}^\infty$ such that $f_{n+p}^{\alpha}=\Psi f_n^\alpha$, $K_\Psi(\alpha)=\{f_0^\alpha,\dots,f_g^\alpha\}$ and the multiplication by $z$ is a~GMP matrix. Note that the property $f_{n+p}^{\alpha}=\Psi f_n^\alpha$ shows that the corresponding matrix is periodic. Def\/ining the isospectral torus of GMP matrices by
\begin{gather*}
A(E,\bC)=\{A\in \GMP(\bC)\colon \sigma(A)=E \text{ and }A\text{ is periodic} \},
\end{gather*}
we obtain the following analogue of \eqref{eq:parametrizationJacobi}:
\begin{Theorem}\label{prop:isospectralGMP}
	Let $E$ be a finite gap set and $\bC$ be a fixed ordering of the zeros of the corresponding Ahlfors function. Then
	\begin{gather*}
	A(E,\bC)=\big\{A(\alpha,\bC)\colon \alpha\in\bbR^g/\bbZ^g\big\},
	\end{gather*}
	where $A(\alpha,\bC)$ is the multiplication by the independent variable w.r.t.\ the basis $\{f_n^\alpha\}$. The map $\alpha\mapsto A(\alpha,\bC)$ is one-to-one up to the identification $(p_j,q_j)\mapsto(-p_j,-q_j)$ for $j=0,\dots, g-1$.
\end{Theorem}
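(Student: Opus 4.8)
The plan is to mirror the functional-model proof of the Jacobi parametrization \eqref{eq:parametrizationJacobi}, replacing the Joukowski preimage $\Phi$ of $T_p$ by the Ahlfors function $\Psi$ and $\Delta_E=\Psi+1/\Psi$ from Lemma~\ref{lem:Delta} (here the period is $p=g+1$). First I fix a character $\alpha\in\Gamma^*$ and, from the decomposition $H^2(\alpha)=K_\Psi(\alpha)\oplus\Psi H^2(\alpha)$, select a basis $f_0^\alpha,\dots,f_g^\alpha$ of the $(g+1)$-dimensional model space $K_\Psi(\alpha)$ adapted to the cyclic family \eqref{eq:gmpcyclic}, i.e.\ obtained by orthonormalizing the reproducing-kernel images of $1,(\bc_1-z)^{-1},\dots,(\bc_g-z)^{-1}$; the relation $f_{n+p}^\alpha=\Psi f_n^\alpha$ then propagates this to a basis $\{f_n^\alpha\}_{n\in\bbZ}$ of $L^2(\alpha)$. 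Computing the matrix of multiplication by $z$ in this basis reproduces the class-$\bbA$ form: block-tridiagonality is the exact analogue of the tridiagonality of the Jacobi model, and the rank-one off-diagonal blocks $A(\vp)=\delta_g\vp\,^*$ encode that consecutive periods $\Psi^nK_\Psi(\alpha)$ overlap in the single direction generated by $\Psi$; identifying the resulting inner products yields the diagonal block $B(\vbp)=(\vq\vp\,^*)^-+(\vp\vq\,^*)^++\tilde\bC$, with $p_g>0$ forced by the positive leading coefficient of $\Psi$. This places $A(\alpha,\bC)$ in the class $\bbA$.

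The genuinely new point, and the step I expect to be the main obstacle, is verifying Definition~\ref{def:GMP}: that every shifted resolvent $S^{-k}(\bc_k-A)^{-1}S^k$ again lies in $\bbA$. This is exactly the invariance flagged after \eqref{eq:gmpcyclicShifted}. Under the change of variable $y=1/(\bc_k-x)$ the cyclic family \eqref{eq:gmpcyclic} is carried to the family \eqref{eq:gmpcyclicShifted} of the same shape, and $\tilde\Delta(y)=\Delta(x)$ intertwines the two functional models; pushing the decomposition $H^2(\alpha)=K_\Psi(\alpha)\oplus\Psi H^2(\alpha)$ through this Möbius substitution shows that the $k$-th shifted resolvent inherits the block-Jacobi form of $\bbA$, so $A(\alpha,\bC)\in\GMP(\bC)$. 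Periodicity is then immediate: $f_{n+p}^\alpha=\Psi f_n^\alpha$ says $\Psi$ is the symbol of $S^p$, and since $z\Psi(z)=\Psi(z)z$ we get $A(\alpha,\bC)S^p=S^pA(\alpha,\bC)$, i.e.\ $\vbp_j=\vbp$ for all $j$. Finally $\sigma(A(\alpha,\bC))=E$, because multiplication by $z$ on $L^2(\alpha)$ lives on the boundary $E$ of $\overline{\bbC}\setminus E$. Hence $A(\alpha,\bC)\in A(E,\bC)$.

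It remains to show that the map is onto $A(E,\bC)$ and to determine its fibres. For surjectivity, take $A\in A(E,\bC)$: periodicity forces $A$ to be reflectionless on $E$ in the sense of \eqref{def:reflectionless}, so its diagonal resolvent functions continue to character automorphic functions on $\overline{\bbC}\setminus E$ with a well-defined character $\alpha\in\Gamma^*$; feeding this $\alpha$ into the construction and invoking uniqueness of the functional model identifies $A$ with $A(\alpha,\bC)$. Distinct characters produce distinct resolvents, hence distinct matrices, giving injectivity in $\alpha$. The only residual freedom in the construction is the sign of each real basis vector $f_j^\alpha$; the normalization $p_g>0$ fixes the sign of $f_g^\alpha$, while flipping $f_j^\alpha\mapsto-f_j^\alpha$ for $j\in\{0,\dots,g-1\}$ flips row and column $j$ of the matrix and—since it leaves $\tilde\bC$ and every untouched entry invariant—acts precisely as $(p_j,q_j)\mapsto(-p_j,-q_j)$. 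These $(\bbZ/2)^g$ sign changes are therefore the exact ambiguity, which completes the identification of $A(E,\bC)$ with $\bbR^g/\bbZ^g$ up to the stated equivalence.
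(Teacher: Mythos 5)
Your proposal is correct and follows essentially the same route as the paper: the basis from the decomposition $H^2(\alpha)=K_\Psi(\alpha)\oplus\Psi H^2(\alpha)$, conformal invariance of the Ahlfors function under $y=1/(\bc_k-x)$ to verify the resolvent condition of Definition~\ref{def:GMP}, reflectionlessness combined with the construction of \cite{KontaniLast} for surjectivity, and the $(\bbZ/2)^g$ sign freedom in the real basis vectors accounting for the identification $(p_j,q_j)\mapsto(-p_j,-q_j)$. The two steps you assert without argument are exactly those the paper supplies by separate results: completeness of $\{f_n^\alpha\}_{n\in\bbZ}$ in $L^2(\alpha)$ is Lemma~\ref{lem:L2complete} (applied in Theorem~\ref{thm:onbsmp}), and the claim that periodicity forces reflectionlessness on $E$ follows from the transfer-matrix formulas \eqref{eq:resolventplus}, Lemma~\ref{lem:rminusSecondSolution} and Theorem~\ref{thm:spectrumOfperiodicA}.
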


Moreover, \eqref{eq:DeltaAhlfors} is the magic formula for GMP matrices in terms of our functional model.

\begin{Theorem}\label{thm:magicFormula}
	Let $A\in\GMP(\bC)$. Then
	\begin{gather*}
	A\in A(E,\bC) \iff \Delta_E(A)=S^{g+1}+S^{-(g+1)}.
	\end{gather*}
\end{Theorem}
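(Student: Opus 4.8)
The plan is to read off the forward implication directly from the functional model of Theorem~\ref{prop:isospectralGMP} and to treat the converse by spectral mapping for the spectrum and a commutation argument for periodicity; note that $S^{g+1}$ is the block shift, sending the $j$-th block identically onto the $(j+1)$-st. For ($\Rightarrow$), let $A=A(\alpha,\bC)$ be multiplication by $z$ in the basis $\{f_n^\alpha\}$. By construction $f_{n+g+1}^\alpha=\Psi f_n^\alpha$, which says exactly that multiplication by the Ahlfors function $\Psi$ coincides with $S^{g+1}$. Since $\Delta_E=\Psi+1/\Psi$ by \eqref{eq:DeltaAhlfors}, the functional calculus in $z$ gives $\Delta_E(A)=S^{g+1}+S^{-(g+1)}$ immediately.

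For ($\Leftarrow$) I would first pin down the spectrum. Because $A\in\GMP(\bC)$, each $\bc_j-A$ is invertible, so $\bc_j\notin\sigma(A)$ and $\Delta_E$ is continuous on $\sigma(A)$; the spectral mapping theorem then yields $\Delta_E(\sigma(A))=\sigma(\Delta_E(A))=\sigma(S^{g+1}+S^{-(g+1)})=[-2,2]$, whence $\sigma(A)\subseteq\Delta_E^{-1}([-2,2])=E$ by \eqref{eq13}. The core of this direction is periodicity, i.e.\ $[A,S^{g+1}]=0$, equivalently $\vbp_j\equiv\vbp$. The starting point is that $W:=S^{g+1}$ is unitary with $W+W^{-1}=\Delta_E(A)$, so $W$ commutes with $\Delta_E(A)$. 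Expanding $[\Delta_E(A),W]=0$ through the resolvent form \eqref{eq:14} produces the Sylvester-type identity
\begin{gather*}
\cL(X):=\lambda_0X+\sum_{j=1}^g\lambda_jR_jXR_j=0,\qquad R_j:=(\bc_j-A)^{-1},\qquad X:=[A,S^{g+1}].
\end{gather*}
In the spectral representation of the self-adjoint operator $A$ the map $\cL$ acts on the $(s,t)$-component by the scalar multiplier
\begin{gather*}
m(s,t)=\lambda_0+\sum_{j=1}^g\frac{\lambda_j}{(\bc_j-s)(\bc_j-t)}=\frac{\Delta_E(s)-\Delta_E(t)}{s-t},
\end{gather*}
so $\cL(X)=0$ forces $X$ to be supported where $m(s,t)=0$. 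On the diagonal $m(t,t)=\Delta_E'(t)>0$ for $t\in E$ (the positivity established in the proof of Lemma~\ref{lem:Delta}), so the diagonal part of $X$ vanishes, and the remaining zeros of $m$ occur only for distinct $s,t$ lying in different bands of $E$ with $\Delta_E(s)=\Delta_E(t)$.

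The hard part will be to eliminate exactly these \emph{cross-band} components and conclude $X=0$. The scalar relation $W+W^{-1}=\Delta_E(A)$ alone cannot do this: it determines $W$ on each spectral subspace of $\Delta_E(A)$ only up to a choice of square root $\sqrt{\Delta_E(A)^2-4}$, and a wrong branch would mix the $g+1$ bands lying over a common value of $\Delta_E$. This is precisely where the finer GMP block structure must enter: the conditions $S^{-k}(\bc_k-A)^{-1}S^{k}\in\bbA$ of Definition~\ref{def:GMP} control how $S$ interacts with each resolvent, and I expect them to be equivalent to the assertion that $S^{g+1}$ is realized as multiplication by $\Psi$, a genuine function of $z$, so that $[A,S^{g+1}]=0$. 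Concretely I would recover the functional model from the relation itself, building the cyclic subspaces \eqref{eq:gmpcyclic}--\eqref{eq:gmpcyclicShifted} and the basis $\{f_n^\alpha\}$ out of $A$ and $S^{g+1}$; this simultaneously exhibits $A=A(\alpha,\bC)$ for some $\alpha$ and forces the correct branch.

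Once periodicity is in hand I would finish by Floquet theory: fibering $A=\int^{\oplus}A(\theta)\,d\theta$ over the block period gives $(g+1)\times(g+1)$ fibers with $\Delta_E(A(\theta))=2\cos\theta\,I_{g+1}$, and since $\Delta_E$ maps each of the $g+1$ bands of $E$ monotonically onto $[-2,2]$, the eigenvalues of $A(\theta)$ trace out all of $E$ as $\theta$ runs through $[0,\pi]$. Hence $\sigma(A)=E$, and together with periodicity this places $A$ in $A(E,\bC)$, which by Theorem~\ref{prop:isospectralGMP} equals $\{A(\alpha,\bC)\colon\alpha\in\bbR^g/\bbZ^g\}$.
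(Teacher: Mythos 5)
Your forward direction coincides with the paper's: invoke Theorem~\ref{prop:isospectralGMP} to write $A=A(\alpha,\bC)$, note $f_{n+g+1}^\alpha=\Psi f_n^\alpha$ makes $\Psi$ the symbol of $S^{g+1}$, and apply \eqref{eq:DeltaAhlfors}. The genuine gap is in the converse, and it sits exactly where you flag ``the hard part'': having correctly derived $\cL(X)=0$ for $X=[A,S^{g+1}]$ with multiplier $m(s,t)=(\Delta_E(s)-\Delta_E(t))/(s-t)$, you stop short of eliminating the cross-band components and substitute a hope (``I expect\dots'', ``I would recover the functional model from the relation itself'') for an argument; as written this is circular, since rebuilding the basis $\{f_n^\alpha\}$ presupposes essentially what is to be proved. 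The paper settles periodicity in one stroke with Na\u{\i}man's lemma (\cite[Lemma~8.2.4]{SimonSzego}): a bounded operator of \emph{finite band width} commuting with $S^{p}+S^{-p}$ commutes with $S^{p}$. Since $A$ commutes with $\Delta_E(A)=S^{g+1}+S^{-(g+1)}$ and every $A\in\bbA$ is banded, $[A,S^{g+1}]=0$ follows immediately. In particular your claim that the scalar relation $W+W^{-1}=\Delta_E(A)$ ``alone cannot do this'' is mistaken once bandedness is used: the branch ambiguity you describe is real for general bounded operators (the reflection $e_n\mapsto e_{-n}$ commutes with $S^{p}+S^{-p}$ but not with $S^{p}$), but the purely matricial identity $X=S^{-(g+1)}XS^{-(g+1)}$, iterated, pushes every entry of $X$ off any fixed band, so $X=0$. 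No finer GMP structure is needed for periodicity.

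Your closing Floquet step also has an unproved claim where the paper has a proof. Membership in $A(E,\bC)$ requires $\sigma(A)=E$ exactly, and ``the eigenvalues of $A(\theta)$ trace out all of $E$'' does not follow from $\Delta_E(A(\theta))=2\cos\theta\, I$ alone: each band carries a unique solution of $\Delta_E(z)=2\cos\theta$, so a priori two eigenvalue branches could coincide inside one band while another band is omitted, giving $\sigma(A)\subsetneq E$; ruling this out needs a separate argument. The paper instead observes that once $A$ is periodic it lies in $A(\sigma(A),\bC)$ by definition, so the forward direction yields $\Delta^A(A)=S^{g+1}+S^{-(g+1)}=\Delta_E(A)$, and then Lemma~\ref{lem:deltaAequalDelta}, (iii)$\implies$(i) --- an entrywise comparison on the outermost non-vanishing diagonal exploiting the GMP positivity $(\bc_k-A)^{-1}_{k-1,g+k}>0$ and $A_{g,2g+1}>0$ --- forces $\Delta^A=\Delta_E$, whence $\sigma(A)=E$ by Theorem~\ref{thm:spectrumOfperiodicA}. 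This, rather than periodicity, is where the structural conditions of Definition~\ref{def:GMP} actually enter the converse.
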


This was one of the main observations which allowed Yuditskii in \cite{YudKillipSimon} to generalize the Killip--Simon theorem to arbitrary systems of intervals.

Finally, we would like to point out that the direct spectral theory of periodic GMP matrices has numerous similarities to the one of periodic Jacobi matrices. It is based on the fact that like Jacobi matrices GMP matrices can be written as a two-dimensional perturbation of a block diagonal matrix. Let $a_0=\|\vp\|$, $\tilde e_0=\frac{1}{a_0}P_+Ae_{-1}$ and $A_\pm=P_\pm AP_\pm^*$, $P_+=I-P_-$. Then
\begin{gather*}
A=\begin{bmatrix}
A_-&0\\
0&A_+
\end{bmatrix}
+
a_0(\langle \cdot, e_{-1}\rangle\tilde e_0+\langle \cdot, \tilde e_0\rangle e_{-1}).
\end{gather*}
\begin{Definition} Let $A\in\GMP(\bC)$ be a periodic GMP matrix with coef\/f\/icients $\vp$ and $\vq$. Let
	$
	\bp=\begin{bmatrix}
	p& q
	\end{bmatrix}\in\bbR^2.
	$
	We introduce the matrix functions
	\begin{gather}\label{thematrix}
	\fa(z;\bp)=\fa(z,\infty;p,q)=
	\begin{bmatrix}
	0&-{p}\\
	\frac 1{p}&\frac{z-pq}{p}
	\end{bmatrix},
	\end{gather}
	and
	\begin{gather*}
	\fa(z,\bc;\bp)=I-\frac{1}{\bc-z}
	\begin{bmatrix}
	p\\ q
	\end{bmatrix}\begin{bmatrix}
	p&q
	\end{bmatrix}\fj,\qquad
	\fj=
	\begin{bmatrix}
	0&-1\\
	1&0
	\end{bmatrix}.
	\end{gather*}
	Then the product
	\begin{gather*}
	\fA(z)=\fa(z,\bc_1;\bp_0)\fa(z,\bc_2;\bp_1)\cdots\fa(z,\bc_g;\bp_{g-1})\fa(z;\bp_g)
	\end{gather*}
	is called the transfer matrix associated with the given $A$. Moreover, we def\/ine its discriminant by
	\begin{gather}\label{def:discriminant}
	\Delta^A(z)=\tr \fA(z).
	\end{gather}
\end{Definition}
\begin{Theorem}\label{thm:spectrumOfperiodicA}
	Let $A\in\GMP(\bC)$ be a periodic GMP matrix with coefficients $\vp$ and $\vq$. Then $A$ has purely absolutely continuous spectrum, which is given by
	\begin{gather*}
	\sigma(A)=\sigma_{ac}(A)=\big\{z\in\bbC\colon \Delta^A(z)\in[-2,2]\big\}.
	\end{gather*}
\end{Theorem}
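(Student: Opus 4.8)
The plan is to establish that the spectrum of a periodic GMP matrix is governed by its discriminant $\Delta^A(z)=\tr\fA(z)$, exactly as the spectrum of a periodic Jacobi matrix is governed by the trace of its transfer matrix. First I would verify that the transfer matrix $\fA(z)$ correctly propagates solutions of the formal eigenvalue equation $Au=zu$ across one period. The factors $\fa(z,\bc_k;\bp)$ and $\fa(z;\bp)$ are $2\times 2$ matrices, so I must check that a single period of the block-Jacobi recursion (which advances through $g+1$ coordinates) can be reorganized into the product of these elementary factors acting on an appropriate pair of boundary data. The key algebraic point is that each factor is designed so that $\det\fa(z,\bc;\bp)=1$ and $\det\fa(z;\bp)=1$ (the latter is immediate from the displayed matrix \eqref{thematrix}), hence $\det\fA(z)=1$. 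This places $\fA(z)$ in $SL_2$ for real $z$, which is what makes the Floquet/Bloch theory go through.

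\textbf{Floquet analysis.} Once $\fA(z)$ is identified as the monodromy matrix, I would apply the standard Floquet theory for periodic operators. For $z\in\bbR$, a generalized eigenfunction is bounded (and the corresponding point lies in the spectrum) precisely when the eigenvalues of $\fA(z)$ lie on the unit circle, i.e.\ when $\fA(z)$ has its two eigenvalues $\mu,\mu^{-1}$ satisfying $|\mu|=1$. Since $\det\fA(z)=1$, the eigenvalues are roots of $\mu^2-\Delta^A(z)\mu+1=0$, so $|\mu|=1$ holds if and only if $\Delta^A(z)=\mu+\mu^{-1}\in[-2,2]$. This is the Joukowski condition, and it directly yields the set $\{z:\Delta^A(z)\in[-2,2]\}$ as the locus of the continuous spectrum. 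I would make this rigorous by constructing Weyl solutions $u_\pm$ that decay at $\pm\infty$ off the spectrum, showing that the resolvent $(A-z)^{-1}$ is given by a Green's function built from $u_\pm$ whose denominator is the Wronskian, and that this Wronskian vanishes exactly on the spectral set.

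\textbf{Absolute continuity and purity.} To upgrade to $\sigma(A)=\sigma_{ac}(A)$ with no singular part, I would pass to the direct integral decomposition $A\cong\int^\oplus A(\vartheta)\,d\vartheta$ over the Brillouin zone via the Fourier transform adapted to the period-$(g+1)$ structure. On each fiber $A(\vartheta)$ is a $(g+1)\times(g+1)$ matrix whose eigenvalues are the band functions $z_j(\vartheta)$, defined implicitly by $\Delta^A(z_j(\vartheta))=2\cos\vartheta$. The spectrum is the union of the ranges of these band functions, and absolute continuity follows once I show the band functions are non-constant and real-analytic, so that the spectral measure is the push-forward of Lebesgue measure under a map with non-vanishing derivative away from finitely many band edges. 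The exclusion of eigenvalues and singular continuous spectrum is automatic from this fibered representation, since each fiber is finite-dimensional and the band map is strictly monotone on the interior of each band.

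\textbf{The main obstacle.} The hard part will be the first step: verifying that the \emph{specific} product $\fA(z)=\fa(z,\bc_1;\bp_0)\cdots\fa(z,\bc_g;\bp_{g-1})\fa(z;\bp_g)$ really implements one period of the GMP recursion. Unlike the Jacobi case, where the transfer matrix falls out transparently from the three-term recurrence, here the block structure $B(\vbp)=(\vq\vp\,^*)^-+(\vp\vq\,^*)^++\tilde\bC$ together with the resolvent-invariance built into Definition~\ref{def:GMP} must be unraveled to see why each resolvent factor $\fa(z,\bc_k;\bp)$ corresponds to advancing one coordinate of the block while ``using up'' one of the poles $\bc_k$. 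I expect this to require the same change-of-variable / cyclic-subspace argument sketched around \eqref{eq:gmpcyclic}--\eqref{eq:gmpcyclicShifted}, showing that multiplication by $\tfrac{1}{\bc_k-x}$ shifts the basis in a controlled way. Once this identification is secured, the remainder of the argument is the classical periodic machinery and the relation $\Delta_E=\Psi+\Psi^{-1}$ from Lemma~\ref{lem:Delta} ties $\Delta^A$ to the spectral set $E$.
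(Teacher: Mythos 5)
Your strategy is sound, but it is genuinely different from the paper's. The paper does no Floquet--Bloch analysis at all: it uses that $\{e_{-1},\tilde e_0\}$ spans a cyclic subspace for $A$ (quoting \cite{YudKillipSimon}), so the entire spectral type is encoded in the $2\times2$ matrix resolvent $R(z)$ of \eqref{eq:ResolventGMP}. Periodicity is converted, via the Sherman--Morrison--Woodbury formula (Theorems~\ref{thm:transfermatrix1} and~\ref{thm:reflextranfermatrix}), into the coefficient-stripping fixed-point identity \eqref{eq:transfermatrix1}, a quadratic equation whose two roots are identified as $a_0^2r_+(z)$ and $1/r_-(z)$ (Corollary~\ref{cor:TransferRelation} plus a Herglotz sign argument, Lemma~\ref{lem:rminusSecondSolution}); this yields the closed formula for $R(z)$ in terms of $\sqrt{\Delta^A(z)^2-4}$, from which $\sigma(A)=\sigma_{ac}(A)=\{z\colon\Delta^A(z)\in[-2,2]\}$ is read off, point spectrum being excluded at the only possible locations --- the poles $\bc_k$ of the entries of $\fA$ --- by the residue positivity $-\Res_{\bc_k}\Delta^A(z)>0$. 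Your monodromy-plus-direct-integral route is the classical alternative, and you correctly locate the main labor: because the off-diagonal blocks $A(\vp)=\delta_g\vp^{\,*}$ have rank one, the block recursion genuinely reduces to a two-dimensional transfer, and establishing that the specific product $\fA(z)$ realizes it is exactly what the paper's SMW computation accomplishes (the entries of $\tilde\fA$ are built from $(B(\vp,\vq\,)-z)^{-1}$ applied to $\vp$ and $\delta_g$). What each buys: your route is self-contained for spectral purity and exhibits the band structure directly; the paper's route produces the explicit resolvent branches $\frac{1}{2\fA_{21}}\bigl(V\pm\sqrt{\Delta^A(z)^2-4}\bigr)$, which are reused later to show $A$ is reflectionless on $E$ in the proof of Theorem~\ref{prop:isospectralGMP}, so the Bloch approach would still need the $r_\pm$ machinery afterwards.

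One caution: your claim that absence of eigenvalues is ``automatic'' in the fibered picture is too quick. A constant eigenvalue branch $z_j(\vartheta)\equiv z_*$ cannot satisfy $\Delta^A(z_*)=2\cos\vartheta$ for all $\vartheta$, but the fiber determinant is $\det(A(\vartheta)-z)=\mathrm{const}\cdot\prod_{k}(\bc_k-z)\bigl(\Delta^A(z)-2\cos\vartheta\bigr)$, and the prefactor admits the $\vartheta$-independent roots $z=\bc_k$; a flat band there would produce point spectrum of infinite multiplicity. Ruling this out requires precisely the residue positivity $\Lambda_k(\vbp)=\nu_k>0$ guaranteed by the GMP structure --- the same fact the paper invokes in its one-line exclusion of pure point spectrum. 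So your sketch is correct in outline, but this step must be made explicit rather than declared automatic.
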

If for a given set $E$ $A\in A(E,\bC)$, then we show in Lemma~\ref{lem:deltaAequalDelta} that indeed $\Delta_E=\Delta^A$. This allows us to explain an alternative def\/inition of $\GMP$ matrices. We def\/ine for a periodic $\GMP$ matrix with generating coef\/f\/icients~$\vp$ and~$\vq$
\begin{gather}
\Lambda_{k}(\vbp)=
-\tr\left\{\prod_{m=0}^{k-2}\fa(\bc_k,\bc_{m+1};\bp_m)
\bp_{k-1} \bp_{k-1}^*\fj \prod_{m=k}^{g-1}\fa(\bc_k,\bc_{m+1};\bp_m) \fa(\bc_k;\bp_g)\right\},
\label{eq:defLambdaK}
\end{gather}
where by def\/inition $\Lambda_{k}(\vbp)=-\Res_{\bc_k}\tr\fA(z)$.

Let us consider the last non-zero diagonal of $\Delta_E(A)$, i.e., $\Delta_E(A)_{j,g+1+j}$ for $0\leq j\leq g$. By def\/inition of $\GMP$ matrices, $(\bc_k-A)^{-1}_{k-1,g+k}>0$ for $1\leq k\leq g$, whereas $(\bc_l-A)^{-1}_{k-1,g+k}=A_{k-1,g+k}=0$ for $l\neq k$. Moreover, $(\bc_k-A)^{-1}_{g,2g+1}=0$ for $1\leq k\leq g$ and $A_{g,2g+1}>0$. Thus, on the last non-zero diagonal of $\Delta_E(A)$ only one of the summands is non-zero. Note that the relation $\Delta_E(z)=\Delta^A(z)$ implies
$
\lambda_k=\Lambda_{k}(\vbp).
$
The previous consideration, the magic formula and this identity yield
\begin{gather*}
\Lambda_k(\vbp)(\bc_k-A)^{-1}_{k-1,g+k}=1 \qquad \text{for} \quad 1\leq k\leq g.
\end{gather*}
It is important to mention that all this just served as explanation, but is not necessary to prove the following alternative def\/inition of $\GMP$ matrices.

\begin{Theorem}
	Let $A\in \bbA$ be periodic with generating coefficients $\vbp$. Then $A\in\GMP(\bC)$ if and only if
	\begin{gather*}
	\Lambda_k(\vbp)>0\qquad \text{for} \quad 1\leq k\leq g.
	\end{gather*}
\end{Theorem}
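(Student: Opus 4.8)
\section*{Proof proposal}

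The plan is to reduce both conditions defining $\GMP(\bC)$---invertibility of $\bc_k-A$ and membership $S^{-k}(\bc_k-A)^{-1}S^k\in\bbA$---to the single scalar $\Lambda_k(\vbp)$, via the transfer matrix $\fA(z)$. The structural fact I would exploit first is that the super-diagonal blocks $A(\vp)=\delta_g\vp^*$ have rank one, so the block three-term recursion for a formal solution of $(A-z)u=0$ couples consecutive blocks only through scalar data, collapsing the one-period monodromy to the $2\times2$ matrix $\fA(z)$. The poles of its entries sit exactly at $\bc_1,\dots,\bc_g$, each contributed by the single factor $\fa(z,\bc_k;\bp_{k-1})$. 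Since every factor has determinant one, $\fA(z)\in\mathrm{SL}_2$ off the poles, and the residue $R_k=\Res_{\bc_k}\fA(z)$ is the rank-one matrix appearing in \eqref{eq:defLambdaK}, with $\tr R_k=-\Lambda_k(\vbp)$.

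First I would establish the resolvent identity
\[
\Lambda_k(\vbp)\,(\bc_k-A)^{-1}_{k-1,\,g+k}=1\qquad\text{whenever }\bc_k\notin\sigma(A).
\]
To do this I would write the Green's function of the periodic operator $A$ through its decaying Floquet solutions at $\pm\infty$, which are the eigenvectors of $\fA(z)$ associated with the multipliers $w^{\pm1}$, where $w+w^{-1}=\Delta^A(z)$. Evaluating the entry connecting position $k-1$ of block $0$ to position $k-1$ of block $1$, and feeding in the factorization of $\fA$ through $\bc_k$, the pole of $\fa(z,\bc_k;\bp_{k-1})$ produces precisely the reciprocal of the residue trace, i.e.\ $1/\Lambda_k$. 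Two consequences follow at once: invertibility of $\bc_k-A$ forces $\Lambda_k\neq0$, and the sign of that entry coincides with the sign of $\Lambda_k$.

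Next I would verify that, when $\Lambda_k>0$, the shifted matrix $S^{-k}(\bc_k-A)^{-1}S^k$ genuinely lies in $\bbA$. Invertibility is automatic: $\Lambda_k\neq0$ means $\bc_k$ is a true pole of $\Delta^A$, so $\Delta^A(\bc_k)=\infty\notin[-2,2]$ and Theorem~\ref{thm:spectrumOfperiodicA} gives $\bc_k\notin\sigma(A)$. For the band structure I would make rigorous the change of variable $y=\tfrac1{\bc_k-x}$ sketched around \eqref{eq:gmpcyclic}--\eqref{eq:gmpcyclicShifted}: multiplication by $y$ is exactly $(\bc_k-A)^{-1}$, and in the rational basis reordered by this Möbius map---the reordering being implemented by $S^k$---multiplication by the independent variable again has the rank-one-coupled block-tridiagonal form defining $\bbA$. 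The only free condition is the positivity of its outermost coefficient, which is the entry $(\bc_k-A)^{-1}_{k-1,g+k}=1/\Lambda_k>0$. The converse is then immediate: if $A\in\GMP(\bC)$ then $\bc_k-A$ is invertible and $S^{-k}(\bc_k-A)^{-1}S^k\in\bbA$, so its outermost diagonal entry $(\bc_k-A)^{-1}_{k-1,g+k}$ is strictly positive, whence $\Lambda_k=1/(\bc_k-A)^{-1}_{k-1,g+k}>0$.

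The main obstacle is the structural claim of the third paragraph---that the reordered resolvent reproduces the full block pattern of $\bbA$, i.e.\ that the diagonal blocks take the form $B(\vbp)=(\vq\vp^*)^-+(\vp\vq^*)^++\tilde\bC$ and the off-diagonal blocks are again rank-one of the form $\delta_g\vp^*$, rather than merely checking the positivity of one entry. Making the heuristic $y=\tfrac1{\bc_k-x}$ precise requires tracking how the partial products $\prod_{m=0}^{k-2}\fa(\bc_k,\bc_{m+1};\bp_m)$ and $\prod_{m=k}^{g-1}\fa(\bc_k,\bc_{m+1};\bp_m)\fa(\bc_k;\bp_g)$ reassemble into the transfer matrix of the transformed problem; this is precisely where the specific shape of $\fa(z,\bc;\bp)$ and the rank-one coupling are indispensable, and it is the step I expect to occupy the bulk of the work.
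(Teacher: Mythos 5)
Your skeleton---reducing both defining conditions of $\GMP(\bC)$ to the single entry $(\bc_k-A)^{-1}_{k-1,g+k}$ via the identity $\Lambda_k(\vbp)(\bc_k-A)^{-1}_{k-1,g+k}=1$---is exactly the heuristic the paper records before the theorem, but the paper also warns that this discussion ``just served as explanation,'' and your write-up shows why: the argument is circular at its key step. To conclude $\bc_k\notin\sigma(A)$ you cite Theorem~\ref{thm:spectrumOfperiodicA}, whose hypothesis is $A\in\GMP(\bC)$---the very membership you are trying to establish---and whose proof moreover relies on the cyclicity of $\{e_{-1},\tilde e_0\}$, quoted from the GMP-specific result \cite[Proposition~5.5]{YudKillipSimon}. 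This is not a formality, because the conclusion of that theorem is genuinely false for matrices that are merely in $\bbA$. Take $g=1$, $\bc_1=0$, $p_0=p_1=q_0=1$, $q_1=2$: then $\Lambda_1(\vbp)=p_0^2/p_1+q_0^2p_1-p_0q_0q_1=0$ and $\Delta^A(z)=z-3$, so $\{z\colon\Delta^A(z)\in[-2,2]\}=[1,5]$; yet writing the components of $u$ in the $j$-th block as $(x_j,y_j)$, the choice $x_j=-y_{j-1}-y_j$ solves $Au=0$ for \emph{every} finitely supported sequence $(y_j)$, so $\bc_1=0$ is an eigenvalue of infinite multiplicity. Since $\Lambda_1=0$ here, this does not contradict the theorem you are proving, but it shows that every spectral input in your second and third paragraphs (the Floquet/Green's function representation, purely a.c.\ spectrum, $\sigma(A)=\{\Delta^A\in[-2,2]\}$, cyclicity) would first have to be re-derived for periodic $\bbA$-matrices under the bare hypothesis $\Lambda_j>0$; none of it may simply be cited, and your proposal does not supply it.

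The second gap is the one you flag yourself, and it is not merely laborious---as planned, it cannot run at all. The Möbius step $y=1/(\bc_k-x)$ presupposes a realization of $A$ as multiplication by the independent variable in an orthonormal basis of rational functions. That realization exists for one-sided matrices built from a measure, and for two-sided matrices only through the functional model of Section~\ref{sec:fmGMP}, which is constructed for $A\in A(E,\bC)$, i.e., \emph{after} GMP membership is known; an abstract periodic $A\in\bbA$ with $\Lambda_k>0$ comes with no measure and no basis to reorder. The paper's actual proof sidesteps both problems by pure linear algebra: following \cite[Lemma~3.2 and Theorem~3.3]{YudKillipSimon} one writes the entries of $(\bc_k-A)^{-1}$ down explicitly, which simultaneously proves invertibility when $\Lambda_k\neq0$ (no spectral theory), exhibits the full $\bbA$-block pattern of $S^{-k}(\bc_k-A)^{-1}S^k$, and yields the identity $\Lambda_k(\vbp)(\bc_k-A)^{-1}_{k-1,g+k}=1$ with the correct sign. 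Your Floquet derivation of that identity is additionally delicate precisely at $z=\bc_k$, where $\Delta^A(\bc_k)=\infty$ makes the multipliers degenerate; it is asserted, not carried out. In short: you have identified the right target identity, but the two steps that constitute the proof---invertibility without presupposing GMP, and the full block structure of the shifted inverse---are exactly the ones you defer or beg, and the rigorous substitute for both is the explicit inverse computation the paper cites.
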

\begin{proof}
	The proof is based on the idea that one can f\/ind the entries of the inverse matrices $(\bc_k-A)^{-1}$ explicitly; cf.~\cite[Lemma~3.2 and Theorem~3.3]{YudKillipSimon}
\end{proof}

The relation $\Delta_E(z)=\Delta^A(z)$ f\/inally leads to an algebraic description for $A(E,\bC)$.

\begin{Theorem}\label{cor:IsospectralAlgebraicManifold}
	Let $A\in\GMP(\bC)$ be periodic with generating coefficients $\vbp$. Then $A\in A(E,\bC)$ if and only if
	\begin{gather}\label{iso101}
	p_g=\frac 1{\lambda_0}, \quad q_g=-\bc_0-\lambda_0\sum_{j=1}^{g-1} p_jq_j, \qquad \Lambda_k(\vbp)=\lambda_k\qquad \text{for} \quad k=1,\dots ,g,
	\end{gather}
	where $\Lambda_k(\vbp)$ is defined as in~\eqref{eq:defLambdaK}.
\end{Theorem}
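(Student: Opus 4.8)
The plan is to collapse the three groups of conditions in \eqref{iso101} onto the single functional identity $\Delta^A=\Delta_E$, and then to extract \eqref{iso101} by comparing these two rational functions term by term. Concretely, I would prove the two equivalences $A\in A(E,\bC)\iff\Delta^A=\Delta_E$ and $\Delta^A=\Delta_E\iff$~\eqref{iso101}, the first being spectral and the second purely algebraic.

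For the equivalence $A\in A(E,\bC)\iff\Delta^A=\Delta_E$, the direction ``$\Leftarrow$'' is immediate from Theorem~\ref{thm:spectrumOfperiodicA}: it gives $\sigma(A)=\{z\in\bbC:\Delta^A(z)\in[-2,2]\}$, which, if $\Delta^A=\Delta_E$, equals $\Delta_E^{-1}([-2,2])=E$ by \eqref{eq13}; since $A$ is periodic by hypothesis, $A\in A(E,\bC)$. For ``$\Rightarrow$'' I would argue intrinsically (this is also the content of Lemma~\ref{lem:deltaAequalDelta}): because $A\in\GMP(\bC)$, the characterization of the $\GMP$ class by the positivity of the $\Lambda_k(\vbp)$ gives $\Lambda_k(\vbp)>0$, and together with $p_g>0$ this exhibits $\Delta^A$ as a rational function with $\Im\Delta^A(z)>0$ for $\Im z>0$ and $\lim_{z\to\infty}\Delta^A(z)=\infty$. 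Since $A\in A(E,\bC)$ forces $(\Delta^A)^{-1}([-2,2])=\sigma(A)=E$, the uniqueness statement of Lemma~\ref{lem:Delta} yields $\Delta^A=\Delta_E$.

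Next I would reduce $\Delta^A=\Delta_E$ to \eqref{iso101} by matching Laurent data. The structural observation is that at $\bc_k$ only the single factor $\fa(z,\bc_k;\bp_{k-1})$ is singular, so $\Delta^A=\tr\fA$ is a rational function whose only poles are the simple poles at $\bc_1,\dots,\bc_g$, with $\Res_{\bc_k}\Delta^A=-\Lambda_k(\vbp)$ by \eqref{eq:defLambdaK}, and which grows linearly at infinity. Hence $\Delta^A$ and $\Delta_E$ are both of the form \eqref{eq:14} with the same poles, their difference is pole-free and of at most linear growth, and therefore $\Delta^A=\Delta_E$ holds precisely when the leading coefficients, the constant terms, and all residues coincide. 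The residue comparison is immediate: $\Res_{\bc_k}\Delta_E=-\lambda_k$, so equality of residues is exactly $\Lambda_k(\vbp)=\lambda_k$, the last group of conditions in \eqref{iso101}.

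It remains to read off the leading coefficient and the constant term of $\Delta^A$ from the transfer matrix, and this asymptotic bookkeeping is where I expect the real work to lie. I would expand each factor at infinity, $\fa(z,\bc_k;\bp_{k-1})=I+\frac{1}{z}\bp_{k-1}\bp_{k-1}^*\fj+O(z^{-2})$, so that the product of the first $g$ factors is $\Pi(z)=I+\frac{1}{z}N+O(z^{-2})$ with $N=\sum_m\bp_m\bp_m^*\fj$; multiplying by $\fa(z;\bp_g)$, whose $(2,2)$ entry equals $z/p_g-q_g$, and taking the trace gives $\Delta^A(z)=\frac{z}{p_g}+\big(\frac{N_{22}}{p_g}-q_g\big)+O(z^{-1})$, where $N_{22}=-\sum_m p_mq_m$. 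Comparing with $\Delta_E(z)=\lambda_0 z+\bc_0+O(z^{-1})$, the leading term gives $1/p_g=\lambda_0$, i.e.\ $p_g=1/\lambda_0$, and the constant term, after substituting $1/p_g=\lambda_0$, gives the formula for $q_g$ in \eqref{iso101}; these are the first two identities. The hard part is precisely this last step: one must track how the $O(z^{-1})$ part of $\Pi$ couples to the $O(z)$ part of $\fa(z;\bp_g)$ to produce the constant term, and check that the off-diagonal entries of $\Pi$ enter only at order $z^{-1}$ and hence do not contaminate it. Assembling the three matched quantities then yields \eqref{iso101}.
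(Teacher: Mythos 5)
Your proposal is correct and is essentially the paper's own proof: your first equivalence, $A\in A(E,\bC)\iff\Delta^A=\Delta_E$, is exactly Lemma~\ref{lem:deltaAequalDelta} (argued as you do, via Theorem~\ref{thm:spectrumOfperiodicA}, the Herglotz property coming from the positivity of the residues, and the uniqueness clause of Lemma~\ref{lem:Delta}), while your Laurent comparison at infinity and at the poles $\bc_k$ is exactly Lemma~\ref{lem:coeffofDeltaA}. One remark: your constant-term bookkeeping correctly yields $q_g=-\bc_0-\lambda_0\sum_{j=0}^{g-1}p_jq_j$ with the sum starting at $j=0$, which agrees with the paper's own computation of the matrix $B$ in the proof of Lemma~\ref{lem:coeffofDeltaA} (where the sum runs over $p_{j-1}q_{j-1}$ for $j=1,\dots,g$), so the range $\sum_{j=1}^{g-1}$ printed in~\eqref{iso101} is evidently a typo in the paper rather than a flaw in your argument.
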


The organization of the paper is as follows. In Section~\ref{sec:directSpectral} we deal with the direct spectral theory of periodic GMP matrices. That is, we prove Theorem \ref{thm:spectrumOfperiodicA}. In Section~\ref{sec:fm} we show that periodic GMP matrices arise as the multiplication by the independent variable w.r.t.~$\{f_n^\alpha\}$ and prove Theorems~\ref{thm:magicFormula} and~\ref{cor:IsospectralAlgebraicManifold}. In both sections we f\/irst recall the known theory for Jacobi matrices and then adapt this construction to the GMP case.

The results of the paper were f\/irst announced in \cite{YudEichKillipSimon}.

\section{Direct spectral theory of periodic GMP matrices}\label{sec:directSpectral}
Let $J$ be a $p$-periodic two-sided Jacobi matrix with generating coef\/f\/icients $\{a_j,b_j\}_{j=0}^g$. Its transfer matrix is def\/ined by
\begin{gather*}
	\fA^J(z)=\fa(z,a_1,b_0/a_1)\fa(z,a_2,b_1/a_2)\cdots\fa(z,a_p,b_{p-1}/a_p),
\end{gather*}
where
\begin{gather*}
	\fa(z,a_j,b_{j-1}/a_j)=
	\begin{bmatrix}
	0&-{a_j}\\
	\frac 1{a_j}&\frac{z-b_{j-1}}{a_j}
	\end{bmatrix},
\end{gather*}
is def\/ined as in \eqref{thematrix}. Moreover, the discriminant is given by $T_p(z)=\tr\fA^J(z)$. This is not a~notational conf\/lict, but $\tr\fA^J(z)$ is indeed the polynomial in~\eqref{eq:PolynomialJacobi}.
The spectrum of $J$ is purely absolutely continuous and
\begin{gather}\label{eq:spectrumJ}
	\sigma(J)=\sigma_{\text{ac}}(J)=T_p^{-1}([-2,2]),
\end{gather}
cf.~\cite[Chapter~5]{SimonSzego}. One way of proving this is to write $J$ as
\begin{gather}\label{eq:orthoDecompJ}
	J=\begin{bmatrix}J_-& 0\\
	0& J_+
	\end{bmatrix}
	+
	a_0(\langle\cdot,e_{-1}\rangle e_0+\langle\cdot,e_{0}\rangle e_{-1}),
\end{gather}
where $\{e_0,e_{-1}\}$ spans a cyclic subspace for $J$. From this representation it is easy to deduce that the matrix resolvent function $R^J(z)$, def\/ined by
\begin{gather*}
	R^J(z)=\begin{bmatrix}
		\langle (J-z)^{-1}e_{-1},e_{-1}\rangle& \langle (J-z)^{-1}e_{0},e_{-1}\rangle\\
		\langle (J-z)^{-1}e_{-1},e_{0}\rangle&\langle (J-z)^{-1}e_{0},e_{0}\rangle
	\end{bmatrix},
\end{gather*}
admits the representation
\begin{gather*}
	R^J(z)=\begin{bmatrix}
	r^J_{-}(z)^{-1}& a_0\\
	a_0& r^J_+(z)^{-1}
	\end{bmatrix}^{-1},
\end{gather*}
where $r^J_{\pm}(z)=\langle(J_\pm-z)^{-1}e_{\frac{-1\pm 1}{2}},e_{\frac{-1\pm 1}{2}}\rangle$.
Using what is called coef\/f\/icient stripping in \cite[Theorem~3.2.4]{SimonSzego} one can then f\/ind a representation for $R^J(z)$ that implies \eqref{eq:spectrumJ}. In this chapter we will follow this strategy for $\GMP$ matrices.

Let $A$ be a one-block periodic $\GMP$ matrix. Due to \cite[Proposition~5.5]{YudKillipSimon}, $\{e_{-1},\tilde e_0\}$ span a~cyclic subspace for~$A$. Like in~\eqref{eq:orthoDecompJ}, we can represent~$A$ as
\begin{gather*}
	A=\begin{bmatrix}A_-& 0\\
	0& A_+
	\end{bmatrix}
	+
	a_0(\langle\cdot,e_{-1}\rangle \tilde e_0+\langle\cdot,\tilde e_{0}\rangle e_{-1}),
\end{gather*}
where $a_0=\|\vp\,\|$. Hence, def\/ining
\begin{gather*}
	R(z)=\begin{bmatrix}
	\langle (A-z)^{-1}e_{-1},e_{-1}\rangle& \langle (A-z)^{-1}\tilde e_{0},e_{-1}\rangle\\
	\langle (A-z)^{-1}e_{-1},\tilde e_{0}\rangle&\langle (A-z)^{-1}\tilde e_{0},\tilde e_{0}\rangle
	\end{bmatrix},
\end{gather*}
we obtain
\begin{gather}\label{eq:ResolventGMP}
	R(z)=\begin{bmatrix}
	r_{-}(z)^{-1}& a_0\\
	a_0& r_+(z)^{-1}
	\end{bmatrix}^{-1},
\end{gather}
where
\begin{gather*}
	r_-(z)=\langle (A_--z)^{-1}e_{-1},e_{-1}\rangle,\qquad r_+(z)=\langle (A_+-z)^{-1}\tilde e_{0},\tilde e_{0}\rangle.
\end{gather*}
The following theorem is an analogue of \cite[Theorem 3.2.4]{KillipSimon} for periodic GMP matrices.

First, we introduce some notations, which will be used in the proof. For $\vec{x}\in\bbR^{g+1}$, we def\/ine
\begin{gather*}
s_k\vec x=\begin{bmatrix} x_0\\ \vdots \\ x_{g-k} \end{bmatrix}.
\end{gather*}
Moreover, let the $M_k$'s be upper triangular matrices such that
\begin{gather*}
B(\vp,\vq\,)-\vp(\vq\,)^*=M(\vp,\vq\,):=M_0=\begin{bmatrix}M_1& 0\\
0 & 0\end{bmatrix}+(-\vp\, q_g+\vq\, p_g)\delta_g^*
\end{gather*}
and
\begin{gather*}
M_k=\begin{bmatrix}M_{k+1}& 0\\
0 & \bc_{g+1-k}\end{bmatrix}+(-s_k\vp\, q_{g-k}+{s_k\vq}\, p_{g-k})(s_k\delta_{g-k})^*\qquad \text{for} \quad 1\leq k\leq g-1.
\end{gather*}
\begin{Theorem}\label{thm:transfermatrix1}
	Let
	\begin{gather*}
	\begin{bmatrix}
	R(z,p,p)&R(z,g,p)\\
	R(z,p,g)& R(z,g,g)
	\end{bmatrix}=
	\begin{bmatrix}
	\langle(B(\vp,\vq\,)-z)^{-1}\vp,\vp\,\rangle& \langle(B(\vp,\vq\,)-z)^{-1}\vec\delta_g,\vp\,\rangle \\
	\langle(B(\vp,\vq\,)-z)^{-1}\vp, \vec\delta_g\rangle& \langle(B(\vp,\vq\,)-z)^{-1}\vec\delta_g,\vec\delta_g\rangle
	\end{bmatrix}.
	\end{gather*}
	Let $A$ be a periodic GMP matrix, $r_+(z)$ the resolvent function of $A_+$, i.e.,
	\begin{gather*}
	r_+(z)=\big\langle(A_+-z)^{-1}\tilde e_0,\tilde e_0\big\rangle.
	\end{gather*}
	Then we have
	\begin{gather}\label{eq:transfermatrix1}
	a^2_0r_+(z)=\frac{a^2_0r_+(z)\tilde\fA_{11}(z)+\tilde\fA_{12}(z)}{a^2_0r_+(z)\tilde\fA_{21}(z)+\tilde\fA_{22}(z)},
	\end{gather}
	where $a^2_0=\|\vp\|^2$ and
	\begin{gather*}
	\tilde\fA(z)=
	\begin{bmatrix}
	\tilde\fA_{11}(z)&\tilde\fA_{12}(z)\\
	\tilde\fA_{21}(z)&\tilde\fA_{22}(z)
	\end{bmatrix} 	=
	\frac{1}{R(z,p,g)}
	\begin{bmatrix}
	R(z,p,p)R(z,g,g)-R(z,p,g)^2& -R(z,p,p)\\
	R(z,g,g)&-1
	\end{bmatrix}.
	\end{gather*}
\end{Theorem}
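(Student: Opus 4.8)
The plan is to perform coefficient stripping directly on the half-line block operator $A_+=P_+AP_+^*$, and to use one-block periodicity to close the stripping identity into a self-consistency equation for $r_+$. The first step is to locate $\tilde e_0$ inside the first block: since $a_0\tilde e_0=P_+Ae_{-1}$ and $e_{-1}$ is the bottom coordinate of the block immediately to the left, on which the off-diagonal entry acts as $A^*(\vp_0)=\vp\,\delta_g^*$, one finds $\tilde e_0=\vp/a_0$ sitting inside the first block $\{e_0,\dots,e_g\}$. Writing $M(z)$ for the compression of $(A_+-z)^{-1}$ to this first block, this identification gives $a_0^2 r_+(z)=\langle M(z)\vp,\vp\rangle$, so it suffices to control the single matrix element of $M(z)$ in the direction $\vp$.

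Next I would apply the Feshbach--Schur formula to the splitting of $A_+$ into its first block and the remainder $A_+^{(1)}$. The only couplings are the off-diagonal blocks $A(\vp)=\delta_g\vp^*$ and $A^*(\vp)=\vp\,\delta_g^*$, so the Schur complement reads $M(z)^{-1}=(B(\vp,\vq)-z)-\langle(A_+^{(1)}-z)^{-1}\vp,\vp\rangle\,\delta_g\delta_g^*$, with $\vp$ now placed in the first block of $A_+^{(1)}$. The key structural input is periodicity: since $\vbp_j\equiv\vbp$, the stripped operator $A_+^{(1)}$ is unitarily equivalent, via the block shift $S^{g+1}$, to $A_+$ itself, so that $\langle(A_+^{(1)}-z)^{-1}\vp,\vp\rangle=a_0^2 r_+(z)$. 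This closes the identity into $M(z)^{-1}=(B(\vp,\vq)-z)-a_0^2 r_+(z)\,\delta_g\delta_g^*$, a rank-one perturbation of $B(\vp,\vq)-z$ whose coupling strength is exactly the unknown $a_0^2 r_+(z)$.

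Finally I would invert this rank-one perturbation by the Sherman--Morrison formula and read off its $\vp$-$\vp$ matrix element, obtaining a scalar equation relating $w:=a_0^2 r_+(z)$ to the four entries $R(z,p,p)$, $R(z,g,p)$, $R(z,p,g)$, $R(z,g,g)$ of $(B(\vp,\vq)-z)^{-1}$. To match the stated M\"obius form with transfer matrix $\tilde\fA$ I need $R(z,g,p)=R(z,p,g)$, which follows from the symmetry $B(\vp,\vq)^*=B(\vp,\vq)$; this is immediate from $B=(\vq\vp^*)^-+(\vp\vq^*)^++\tilde\bC$, since transposition interchanges the strictly upper part of $\vq\vp^*$ with the strictly lower part of $\vp\vq^*$ and fixes both the common diagonal $\Diag(p_iq_i)$ and $\tilde\bC$. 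With this symmetry the scalar equation rearranges into exactly $w(w\tilde\fA_{21}+\tilde\fA_{22})=w\tilde\fA_{11}+\tilde\fA_{12}$, and a one-line check gives $\det\tilde\fA=1$, confirming the transfer-matrix normalization. The genuinely delicate point is the middle step: setting up the block Schur complement with correct bookkeeping of which coordinate ($\delta_g$ versus $\vp$) carries each coupling, and justifying the equivalence $A_+^{(1)}\cong A_+$ that converts a one-step stripping identity into a closed fixed-point equation. The remaining resolvent computations are routine and, if explicit entries are wanted, can be produced from the upper-triangular decomposition $B(\vp,\vq)=M(\vp,\vq)+\vp\vq^*$ together with its nested blocks $M_k$.
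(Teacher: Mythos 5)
Your proposal is correct and takes essentially the same route as the paper: the paper likewise writes $A_+$ as the block-diagonal sum of $B(\vp,\vq\,)$ and, by one-block periodicity, a shifted copy of $A_+$ itself, coupled through the two-dimensional perturbation $a_0(\langle\cdot,e_g\rangle\tilde e_1+\langle\cdot,\tilde e_1\rangle e_g)$ with $\tilde e_1=S^{g+1}\tilde e_0$, and then closes the self-consistency equation by the Sherman--Morrison--Woodbury formula. Your Feshbach--Schur complement followed by a rank-one Sherman--Morrison inversion is the same computation in slightly different packaging, and your identifications $\tilde e_0=\vp/a_0$ and $A_+^{(1)}\cong A_+$ are precisely the inputs on which the paper's one-line proof rests.
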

\begin{proof}
	We write
	\begin{gather*}
		A_+=
		\begin{bmatrix}
		B(\vp,\vq\,)& 0\\
		0& A_+
		\end{bmatrix}+
		a_0(\langle\cdot,e_{g}\rangle\tilde e_1+\langle \cdot,\tilde e_1\rangle e_g),
	\end{gather*}
	where $\tilde e_1=S^{g+1}\tilde e_0$ and apply the Sherman--Morrison--Woodbury formula (cf.~\cite[Section~2.1.3]{GoVL13}) to prove the theorem.
\end{proof}

\begin{Theorem}
	Let $\tilde\fA$ be defined as in Theorem~{\rm \ref{thm:transfermatrix1}} and $\fA$ be the transfer matrix of $A$. Then we have $\tilde\fA(z)=\fA(z)$.
\end{Theorem}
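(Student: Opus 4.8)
The goal is to prove that the matrix $\tilde\fA(z)$ assembled from the resolvent data of the single block $B(\vp,\vq\,)$ in Theorem~\ref{thm:transfermatrix1} coincides with the transfer matrix $\fA(z)=\fa(z,\bc_1;\bp_0)\cdots\fa(z,\bc_g;\bp_{g-1})\fa(z;\bp_g)$ built from the elementary factors. Both objects are rational $2\times2$-matrix functions of $z$, and both have determinant one: the product $\fA$ has unit determinant because each factor does (one checks $\det\fa(z,\bc;\bp)=1$ and $\det\fa(z;\bp)=1$ directly), while the displayed formula for $\tilde\fA$ exhibits $\det\tilde\fA=\frac{1}{R(z,p,g)^2}\big((R(z,p,p)R(z,g,g)-R(z,p,g)^2)(-1)-(-R(z,p,p))R(z,g,g)\big)=1$. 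So the plan is to show two $\mathrm{SL}_2$-valued rational functions agree.

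\medskip

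The strategy I would follow is to compute the resolvent entries $R(z,p,p)$, $R(z,p,g)$, $R(z,g,g)$ of the finite block $B(\vp,\vq\,)$ explicitly in terms of the elementary factors, exploiting that $B(\vp,\vq\,)$ is itself (up to the rank-one correction recorded by the $M_k$ decomposition) a $\GMP$-type block whose resolvent can be peeled off one coordinate at a time. Concretely, I would perform coefficient stripping inside the single block exactly as Theorem~\ref{thm:transfermatrix1} strips the first block off $A_+$: the recursive definition of the $M_k$ via $M_k=\begin{bmatrix}M_{k+1}&0\\0&\bc_{g+1-k}\end{bmatrix}+(-s_k\vp\,q_{g-k}+s_k\vq\,p_{g-k})(s_k\delta_{g-k})^*$ is precisely the algebraic encoding of removing the factor $\fa(\,\cdot\,,\bc_{g+1-k};\bp)$ from the transfer product. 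I would show by downward induction on $k$ that the $2\times2$ resolvent of $M_k$, paired against the relevant coordinate vectors, transforms under the Möbius/transfer action of the single factor $\fa(z,\bc_{g+1-k};\cdot)$, and at the bottom of the recursion the scalar block $\bc_{\cdot}$ together with the $\delta_g$-direction produces the remaining factor $\fa(z;\bp_g)$ coming from the $(-\vp q_g+\vq p_g)\delta_g^*$ term in $M_0$.

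\medskip

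The cleanest way to organize this is through the Möbius action rather than matrix entries: equation~\eqref{eq:transfermatrix1} says that the matrix $\tilde\fA$ acts on the scalar $a_0^2 r_+$ by a fractional linear transformation, and the defining product $\fA=\prod\fa$ acts as the composition of the elementary fractional linear transformations attached to each $\fa(z,\bc_{m+1};\bp_m)$ and to $\fa(z;\bp_g)$. Since an element of $\mathrm{SL}_2(\bbC)$ is determined up to sign by its action on $\overline{\bbC}$, and both $\tilde\fA$ and $\fA$ are genuine matrices (not just projective classes) with the same normalization at $z\to\infty$, matching the Möbius actions forces $\tilde\fA=\fA$. So I would verify that the single-step resolvent recursion for $B(\vp,\vq\,)$ reproduces exactly one factor $\fa$ at each stage, identify the normalizing sign using the asymptotics $\fa(z;\bp_g)\sim\frac{1}{p_g}\begin{bmatrix}0&0\\0&z\end{bmatrix}$ as $z\to\infty$ together with the leading behavior of the $R(z,\cdot,\cdot)$, and conclude that the accumulated product equals $\fA(z)$.

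\medskip

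The main obstacle is the bookkeeping in the inductive step: one must show that stripping the coordinate indexed by $g+1-k$ from $B(\vp,\vq\,)$—equivalently passing from $M_{k}$ to $M_{k+1}$—changes the $2\times2$ resolvent pairing exactly by left-multiplication by the elementary factor $\fa(z,\bc_{g+1-k};\bp_{\,\cdot})$, with the correct coefficient subvectors $s_k\vp$, $s_k\vq$ appearing in the right slots. This requires carefully tracking how the rank-one correction $(-s_k\vp\,q_{g-k}+s_k\vq\,p_{g-k})(s_k\delta_{g-k})^*$ interacts with the inverse $(M_{k+1}-z)^{-1}$ via Sherman--Morrison, and confirming that the resulting update matches the entries of $\fa(z,\bc_{g+1-k};\bp)=I-\frac{1}{\bc_{g+1-k}-z}\begin{bmatrix}p\\q\end{bmatrix}\begin{bmatrix}p&q\end{bmatrix}\fj$ coordinate by coordinate. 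Everything else—the $\det=1$ normalization, the $z\to\infty$ sign fixing, and the reduction from matrix equality to equality of Möbius actions—is routine once this single stripping identity is in hand.
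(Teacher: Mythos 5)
Your proposal is correct and takes essentially the same route as the paper's own proof: the paper likewise writes $B(\vp,\vq\,)$ as a rank-one perturbation of the triangular matrix $M_0$, applies the Sherman--Morrison--Woodbury formula, and uses the recursion $M_k\mapsto M_{k+1}$ to peel off one elementary factor per step, obtaining first $\tilde\fA(z)=\fA_0(z)\fa(z;p_g,q_g)$ and then $\fA_{j-1}(z)=\fA_{j}(z)\fa(z,\bc_{g+1-j};p_{g-j},q_{g-j})$. Your additional M\"obius-action and sign-fixing scaffolding is harmless but superfluous (and from \eqref{eq:transfermatrix1} alone it would not suffice, since the action on the single point $a_0^2r_+(z)$ does not determine the matrix), because the stripping identity you propose to verify via Sherman--Morrison already holds at the level of matrix entries, which is exactly how the paper concludes.
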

\begin{proof}
	First, we represent $B(\vp,\vq)$ as a one-dimensional perturbation of a lower diagonal matrix. Applying the Sherman--Morrison--Woodbury formula again, leads to a representation of $\tilde\fA$ in terms of~$M_0$. Using that~$M_0$ is a~lower triangular matrix, we obtain
	\begin{gather*}
	\tilde \fA(z)=\fA_0(z)\fa(z;p_g,q_g),
	\end{gather*}
	where
	\begin{gather*}
	\fA_0(z)=I-\begin{bmatrix} \langle (M_1-z)^{-1} {u_1\vp},{u_1\vp}\,\rangle&
	\langle (M_1-z)^{-1}{u_1\vq},{u_1\vp}\,\rangle\\
	\langle (M_1-z)^{-1}{u_1\vp},{u_1\vq}\,\rangle & \langle (M_1-z)^{-1}{u_1\vq},{u_1\vq}\,\rangle
	\end{bmatrix}\fj.
	\end{gather*}
	Using again that all $M_j's$ are lower triangular matrices, we f\/ind that
	\begin{gather*}
	\fA_{j-1}(z)=\fA_{j}(z)\fa(z,\bc_{g+1-j};p_{g-j},q_{g-j}),
	\end{gather*}
	where
	\begin{gather*}
	\fA_{j-1}(z)=I-\begin{bmatrix} \langle (M_j-z)^{-1}{u_j\vp},{u_j\vq}\,\rangle&
	\langle (M_j-z)^{-1}{u_j\vq},{u_j\vp}\,\rangle\\
	\langle (M_j-z)^{-1}{u_j\vp},{u_j\vq}\,\rangle & \langle (M_j-z)^{-1}{u_j\vq},{u_j\vq}\,\rangle
	\end{bmatrix}\fj.\tag*{\qed}
	\end{gather*}
\renewcommand{\qed}{}
\end{proof}
\begin{Remark}\quad
	\begin{enumerate}\itemsep=0pt
		\item Note that $\fA$ is normalized such that $\det\fA=1$.
		\item For a general (non periodic) $\GMP$ matrix the only dif\/ference in~\eqref{eq:transfermatrix1} is that in the right-hand side $a_0$ is replaced by $\|\vp_1\|$ and $r_+$ by $r_+^{(1)}$, which is the resolvent function related to the shifted $\GMP$ matrix $S^{-(g+1)}AS^{g+1}$. This explains the name transfer matrix.
		\item Unlike Jacobi matrices, we consider the relation between the resolvent function of the initial and the $g+1$-shifted $\GMP$ matrix, since this shift preserves its structure.
	\end{enumerate}
\end{Remark}
Applying the same calculations to $r_-$ leads to the following theorem.
\begin{Theorem}\label{thm:reflextranfermatrix}
	For a periodic GMP matrix $A$, let
	\begin{gather*}
		\fA_-(z)=\begin{bmatrix}
		\fA^-_{11}(z)& \fA^-_{12}(z)\\
		\fA^-_{21}(z)& \fA^-_{22}(z)
		\end{bmatrix}
	\end{gather*}
 be the transfer matrix of $r_-$, i.e.,
	\begin{gather}\label{eq:transferMatrixMinus}
	r_-(z)=\frac{r_-(z)\fA^-_{11}(z)+\fA^-_{12}(z)}{r_-(z)\fA^-_{21}(z)+\fA^-_{22}(z)}.
	\end{gather}
	Then it is of the form
	\begin{gather*}
	\fA_-(z)=\fa_-(z,\infty;p_g,q_g)\fa_-(z,c_g;p_{g-1},q_{g-1})\cdots\fa_-(z,c_1;p_0,q_0),
	\end{gather*}
	where
	\begin{gather*}
	\fa_-(z,\infty;p,q)=\begin{bmatrix}
	0&-\frac{1}{p}\\
	p&\frac{z-pq}{p}
	\end{bmatrix}\qquad\text{and}\qquad
	\fa_-(z,\bc;p,q)=I-\frac{1}{\bc-z}
	\begin{bmatrix}
	q\\ p
	\end{bmatrix}\begin{bmatrix}
	q&p
	\end{bmatrix}\fj.
	\end{gather*}
\end{Theorem}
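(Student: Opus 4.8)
The plan is to replay, essentially line for line, the proof of Theorem~\ref{thm:transfermatrix1} together with the factorization carried out in the theorem immediately following it, but with the half-line operator $A_+$ and its resolvent $r_+$ replaced by $A_-$ and $r_-$. The only genuine difference is one of orientation. For $A_+$ the vector at which $r_+$ is evaluated, $\tilde e_0=\vp/a_0$, sits \emph{inside} the boundary block $B(\vp,\vq\,)$, while the coupling to the next period is carried by $\delta_g$; for $A_-$ the vector $e_{-1}$ at which $r_-$ is evaluated is $\delta_g$ inside the boundary block, whereas the coupling to the previous period is carried by the direction $\vp/a_0$. Thus, within $B(\vp,\vq\,)$, the roles of $\vp$ and $\delta_g$ as ``diagonal'' and ``off-diagonal'' vectors are interchanged, and it is precisely this interchange that will turn each factor $\fa$ into $\fa_-$ and reverse the order of the product.

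First I would peel off the boundary block of $A_-$. Using periodicity to identify the shifted copy of $A_-$ with $A_-$ itself, write
\begin{gather*}
A_-=
\begin{bmatrix}
A_-& 0\\
0& B(\vp,\vq\,)
\end{bmatrix}
+
a_0\big(\langle\cdot,\tilde e\,\rangle\hat e+\langle\cdot,\hat e\,\rangle\tilde e\,\big),
\end{gather*}
where $\hat e=\vp/a_0$ lies in the boundary block $B(\vp,\vq\,)$ and $\tilde e=\delta_g$ lies in the shifted copy. Applying the Sherman--Morrison--Woodbury formula exactly as in Theorem~\ref{thm:transfermatrix1}, but now with $\delta_g$ playing the role of the diagonal vector and $\vp$ that of the coupling vector, yields the fixed-point relation~\eqref{eq:transferMatrixMinus}. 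Here $\fA_-$ is built from the same four matrix elements $R(z,p,p)$, $R(z,g,g)$, $R(z,p,g)$ of the resolvent of $B(\vp,\vq\,)$ that already enter~\eqref{eq:transfermatrix1}, but with $R(z,p,p)$ and $R(z,g,g)$ now occupying the interchanged positions forced by taking $\delta_g$, rather than $\vp$, as the diagonal vector.

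Next I would factor the resolvent of $B(\vp,\vq\,)$ into elementary pieces, reusing the nested matrices $M_k$ introduced before Theorem~\ref{thm:transfermatrix1}. Representing $B(\vp,\vq\,)$ as a one-dimensional perturbation of the triangular matrix $M_0$ and applying Sherman--Morrison--Woodbury, I strip off one factor at a time. I expect the triangular structure of each $M_k$ to produce a recursion of the form $\fA_{j-1}^{-}(z)=\fa_-(z,\bc_{g+1-j};p_{g-j},q_{g-j})\fA_{j}^{-}(z)$, in which each new factor is multiplied on the \emph{left}, rather than on the right as in the $A_+$ case; this left-multiplication is what reverses the order of the product, and it arises because the unwinding now starts from the diagonal vector $\delta_g$ at the opposite end of the block. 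Collecting the factors --- including the $\infty$-factor $\fa_-(z,\infty;p_g,q_g)$ produced at the first step of the recursion --- gives
\begin{gather*}
\fA_-(z)=\fa_-(z,\infty;p_g,q_g)\fa_-(z,\bc_g;p_{g-1},q_{g-1})\cdots\fa_-(z,\bc_1;p_0,q_0),
\end{gather*}
with the stated shapes of $\fa_-(z,\infty;p,q)$ and $\fa_-(z,\bc;p,q)$.

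The main obstacle is purely the bookkeeping of these swapped roles. One has to verify, through each Sherman--Morrison--Woodbury step and through the $M_k$-recursion, that interchanging the diagonal vector $\vp$ with the coupling vector $\delta_g$ produces exactly the factors $\fa_-$ --- that is, $p$ and $q$ exchanged in the finite factors $\fa_-(z,\bc;p,q)$ and the off-diagonal entries $-p$, $1/p$ of $\fa(z;p,q)$ replaced by $-1/p$, $p$ in $\fa_-(z,\infty;p,q)$ --- and that it simultaneously reverses the order in which the factors accumulate. Getting the single transpose and the signs right in each individual elementary factor, rather than the structure of the overall computation, is where the care is needed.
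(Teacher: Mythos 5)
Your outline is the paper's own proof: the paper dispatches this theorem in one sentence, saying the argument is the same as for $r_+$ except that the ``mirrored'' $B$-block must be written as a one-dimensional perturbation of an \emph{upper} triangular matrix, and your role-swap of $\vp$ and $\delta_g$, the interchange of $R(z,p,p)$ and $R(z,g,g)$ in the M\"obius coefficient matrix, and the left-accumulation of the elementary factors are exactly that argument spelled out. The one point you must correct is precisely the point the paper's proof flags: you cannot literally ``reuse the nested matrices $M_k$.'' The splitting $B(\vp,\vq\,)=M_0+\vp\,\vq^{\,*}$ isolates the rank-one direction $\vp$, and its telescoping peels the block from the $\vp$-end, which is what produces right-multiplied factors of type $\fa$ in the $r_+$ case; kept unchanged for $r_-$, it would give back the factorization of $\fA$ rather than extract $\fa_-(z,\infty;p_g,q_g)$ on the left. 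What is needed is the complementary rank-one splitting $B(\vp,\vq\,)=N_0+\vq\,\vp^{\,*}$, where $N_0=(\vp\,\vq^{\,*})^{+}-(\vq\,\vp^{\,*})^{+}+\tilde\bC$ is \emph{lower} triangular (equivalently, after mirroring the indices, the $B$-block is a perturbation of an upper triangular matrix, as the paper puts it), together with the analogously flipped nested splittings at each subsequent step; note that the diagonal of $N_0$ is $\tilde\bC$ since $p_jq_j-q_jp_j=0$. With that choice your anticipated recursion $\fA^-_{j-1}(z)=\fa_-(z,\bc_{g+1-j};p_{g-j},q_{g-j})\,\fA^-_{j}(z)$ and the stated product order do come out as claimed, and everything else in your proposal --- the peeling of the boundary block using periodicity, the Sherman--Morrison--Woodbury step yielding \eqref{eq:transferMatrixMinus}, and the bookkeeping that exchanges $p$ and $q$ in the finite factors and replaces the off-diagonal entries $-p$, $1/p$ by $-1/p$, $p$ in the $\infty$-factor --- matches the paper; as a check, the result is consistent with Corollary~\ref{cor:TransferRelation}, since conjugating $\fA(z)^{T}$ by $\mathrm{diag}(1,-1)$ reverses the product order and turns each $\fa$ into the corresponding $\fa_-$.
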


\begin{proof}
	The proof is the same as for $r_+$, but in order to extract $\fa_-(z,\infty;p,q)$ in the f\/irst step, one has to write the ``mirrored'' $B$-block as a~one-dimensional perturbation of an upper triangular matrix.
\end{proof}

The following corollary will be an important ingredient in the proof of Theorem \ref{thm:spectrumOfperiodicA}.
\begin{Corollary}\label{cor:TransferRelation}
	With the notation from above the entries of $\fA$ and $\fA_-$ are related by
		\begin{gather*}
		\fA_{11}(z)=\fA_{11}^-(z),\qquad \fA_{12}(z)=-\fA_{21}^-(z),\qquad \fA_{21}(z)=-\fA_{12}^-(z),\qquad \fA_{22}(z)=\fA_{22}^-(z).
		\end{gather*}
	\end{Corollary}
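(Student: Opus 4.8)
The four scalar identities are precisely the entries of one matrix identity, and the cleanest route is to prove that identity factor by factor. Writing $\sigma=\Diag(1,-1)$ and letting $M^{\mathrm T}$ denote the transpose, conjugation by $\sigma$ negates the off-diagonal entries of a $2\times2$ matrix, so
\begin{gather*}
\sigma\,\fA(z)^{\mathrm T}\sigma=\begin{bmatrix}\fA_{11}(z)&-\fA_{21}(z)\\ -\fA_{12}(z)&\fA_{22}(z)\end{bmatrix}.
\end{gather*}
Comparing with the statement, the corollary is equivalent to the single relation $\fA_-(z)=\sigma\,\fA(z)^{\mathrm T}\sigma$.

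The plan is to establish this relation on the level of the elementary transfer factors. Concretely, I would verify the two identities
\begin{gather*}
\sigma\,\fa(z;\bp)^{\mathrm T}\sigma=\fa_-(z,\infty;p,q),\qquad
\sigma\,\fa(z,\bc;\bp)^{\mathrm T}\sigma=\fa_-(z,\bc;p,q),
\end{gather*}
both by direct computation from the explicit $2\times2$ forms in the Definition and in Theorem~\ref{thm:reflextranfermatrix}. For the first, transposing $\fa(z;\bp)$ swaps the off-diagonal entries $-p$ and $1/p$, and conjugating by $\sigma$ restores the signs, producing exactly $\fa_-(z,\infty;p,q)$. For the second, one expands the rank-one terms: $\fa(z,\bc;\bp)$ is built from $\begin{bmatrix}p\\ q\end{bmatrix}\begin{bmatrix}p&q\end{bmatrix}\fj$, while $\fa_-(z,\bc;p,q)$ is built from $\begin{bmatrix}q\\ p\end{bmatrix}\begin{bmatrix}q&p\end{bmatrix}\fj$; carrying out the products and applying $\sigma(\cdot)^{\mathrm T}\sigma$ to the former yields the latter.

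With the factor identities in hand, the corollary follows from the order-reversing property of the transpose together with $\sigma^2=I$. Writing $\fA(z)=\fa(z,\bc_1;\bp_0)\cdots\fa(z,\bc_g;\bp_{g-1})\fa(z;\bp_g)$, we obtain
\begin{gather*}
\sigma\,\fA(z)^{\mathrm T}\sigma=\big(\sigma\,\fa(z;\bp_g)^{\mathrm T}\sigma\big)\big(\sigma\,\fa(z,\bc_g;\bp_{g-1})^{\mathrm T}\sigma\big)\cdots\big(\sigma\,\fa(z,\bc_1;\bp_0)^{\mathrm T}\sigma\big),
\end{gather*}
where a factor $\sigma^2=I$ has been inserted between consecutive terms. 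By the factor identities the right-hand side equals $\fa_-(z,\infty;p_g,q_g)\fa_-(z,\bc_g;p_{g-1},q_{g-1})\cdots\fa_-(z,\bc_1;p_0,q_0)$, which is precisely $\fA_-(z)$ as given in Theorem~\ref{thm:reflextranfermatrix}. This proves $\fA_-(z)=\sigma\,\fA(z)^{\mathrm T}\sigma$ and hence the four claimed relations.

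There is no analytic difficulty here; the only point requiring care, rather than genuine obstruction, is matching the reversed factor ordering of Theorem~\ref{thm:reflextranfermatrix} against the order reversal induced by the transpose and keeping the index pairing $\bp_0,\dots,\bp_g$ aligned, after which the algebra is routine.
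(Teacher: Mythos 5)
Your proposal is correct and takes essentially the same route as the paper: the paper's entire proof consists of citing the single relation $\fA(z)=\sigma\,\fA_-(\overline z)^*\,\sigma$ with $\sigma=\Diag(1,-1)$, which coincides with your identity $\fA_-(z)=\sigma\,\fA(z)^{\mathrm T}\sigma$ because all parameters $p_j$, $q_j$, $\bc_j$ are real, so $\fA_-(\overline z)^*=\fA_-(z)^{\mathrm T}$. You simply make explicit the factor-by-factor verification (including the order reversal under transposition) that the paper leaves implicit.
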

	\begin{proof}
		This follows by the relation
		\begin{gather*}
		\fA(z)=\begin{bmatrix}
		1&0\\
		0&-1
		\end{bmatrix}
		\fA_-(\overline{z})^*
		\begin{bmatrix}
		1&0\\
		0&-1
		\end{bmatrix}.\tag*{\qed}
		\end{gather*}
\renewcommand{\qed}{}
	\end{proof}

Clearly, \eqref{eq:transfermatrix1} is a quadratic equation for $a_0^2r_+$. Let $\Delta^A(z)=\tr\fA(z)$ and $V(z)=\fA_{11}(z)-\fA_{22}(z)$. Using the normalization of $\fA$, we see that for all $z\in\bbC_+$
	\begin{gather}\label{eq:resolventplus}
	a_0^2r_+(z)=\frac{1}{2\fA_{21}(z)}\Big(V(z)+\sqrt{\Delta^A(z)^2-4}\Big),
	\end{gather}
	where one takes the branch of the square root with $\sqrt{\Delta^A(z)^2-4}=\Delta^A(z)+\mathcal{O}\big(\frac{1}{\Delta^A(z)}\big)$ near $z=\infty$.
	\begin{Lemma}\label{lem:rminusSecondSolution}
		The function $r_-^{-1}$ is the second solution of \eqref{eq:transfermatrix1}. That is,
		\begin{gather*}
			\frac{1}{r_-(z)}=\frac{1}{2\fA_{21}(z)}\Big(V(z)-\sqrt{\Delta^A(z)^2-4}\Big),
		\end{gather*}
	\end{Lemma}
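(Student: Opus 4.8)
The plan is to exploit that both $a_0^2 r_+$ and $r_-^{-1}$ are roots of one and the same quadratic equation, namely the one obtained by clearing denominators in \eqref{eq:transfermatrix1}. Writing $w=a_0^2 r_+$ and multiplying out \eqref{eq:transfermatrix1} gives
\[
\fA_{21}(z)\,w^2-V(z)\,w-\fA_{12}(z)=0,
\]
and since $\det\fA=1$ the discriminant simplifies to
\[
V^2+4\fA_{12}\fA_{21}=(\fA_{11}-\fA_{22})^2+4(\fA_{11}\fA_{22}-1)=\big(\Delta^A\big)^2-4,
\]
so the two roots are $\frac{1}{2\fA_{21}}\big(V\pm\sqrt{(\Delta^A)^2-4}\big)$; by \eqref{eq:resolventplus} the $+$ branch is exactly $a_0^2 r_+$.

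Next I would show that $r_-^{-1}$ solves the \emph{same} quadratic. Starting from the analogous relation \eqref{eq:transferMatrixMinus} for $r_-$ and clearing denominators yields
\[
\fA^-_{21}\,r_-^2+\big(\fA^-_{22}-\fA^-_{11}\big)\,r_--\fA^-_{12}=0.
\]
Substituting the entry identities of Corollary~\ref{cor:TransferRelation}, that is $\fA^-_{11}=\fA_{11}$, $\fA^-_{22}=\fA_{22}$, $\fA^-_{21}=-\fA_{12}$ and $\fA^-_{12}=-\fA_{21}$, this becomes $-\fA_{12}r_-^2-V r_-+\fA_{21}=0$. Multiplying by $r_-^{-2}$ turns it into $\fA_{21}\,r_-^{-2}-V\,r_-^{-1}-\fA_{12}=0$, which is precisely the quadratic above with $w=r_-^{-1}$. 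Hence $r_-^{-1}$ is one of the two roots.

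It then remains only to identify which root. Since both $a_0^2 r_+$ and $r_-^{-1}$ satisfy a quadratic whose roots are exactly the two functions found above, and $a_0^2 r_+$ is the $+$ branch, it suffices to check $r_-^{-1}\neq a_0^2 r_+$ to conclude that $r_-^{-1}$ is the $-$ branch. This I would do via the asymptotics at $z=\infty$: as resolvent functions both $r_\pm(z)\to 0$, so $a_0^2 r_+(z)\to 0$ while $r_-(z)^{-1}\to\infty$; the two functions are therefore distinct and the identification is forced, giving the claimed formula. The only point requiring care is the sign bookkeeping when feeding Corollary~\ref{cor:TransferRelation} into the $r_-$ equation, since the off-diagonal entries switch places and change sign; everything else is a routine manipulation of the quadratic.
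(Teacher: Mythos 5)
Your proposal is correct and follows essentially the same route as the paper: both reduce \eqref{eq:transferMatrixMinus} via the entry identities of Corollary~\ref{cor:TransferRelation} to the same quadratic $\fA_{21}w^2-V w-\fA_{12}=0$ satisfied by $a_0^2r_+$ and then identify which of the two roots $r_-^{-1}$ is. The only minor divergence is the final identification step: where you argue distinctness from the asymptotics $a_0^2r_+(z)\to 0$, $r_-(z)^{-1}\to\infty$ at $z=\infty$ (which implicitly needs an identity-theorem continuation to propagate the choice of branch over all of $\bbC_+$), the paper notes directly that $\Im r_-^{-1}<0$ while $\Im a_0^2r_+>0$ pointwise on $\bbC_+$, which forces the minus branch at every point at once.
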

	\begin{proof}
		Due to \eqref{eq:transferMatrixMinus}, we have
		\begin{gather*}
			r_-(z)=\frac{\fA_{22}^-r_--\fA_{12}^-}{-\fA_{21}^-r_-+\fA_{11}^-}(z).
		\end{gather*}
		Using Corollary \ref{cor:TransferRelation}, we see that $r_-^{-1}$ is a solution of~\eqref{eq:transfermatrix1}. That $r_-^{-1}$ is distinct form~$a_0^2r_+$ on~$\bbC_+$ follows since $\Im r_-^{-1}<0$, while $\Im a_0^2r_+>0$ on $\bbC_+$.
	\end{proof}
	\begin{Lemma}
		The resolvent function of a periodic $\GMP$ matrix $A$ admits the following representation:
		\begin{gather*}
			R(z)=\frac{1}{2a_0^2\sqrt{\Delta^A(z)^2-4}}
			\begin{bmatrix}
				-2a_0^2\fA_{21}(z)& a_0V(z)\\
				a_0V(z)& 2\fA_{12}(z)
			\end{bmatrix}
			+\frac{1}{2a_0}
			\begin{bmatrix}
			0& 1\\
			1& 0
			\end{bmatrix}.
		\end{gather*}
	\end{Lemma}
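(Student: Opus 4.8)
The plan is to invert the $2\times 2$ matrix in \eqref{eq:ResolventGMP} explicitly and then feed in the two known solutions of the quadratic \eqref{eq:transfermatrix1}. Abbreviating $D(z)=\sqrt{\Delta^A(z)^2-4}$, I would start from the cofactor formula
\begin{gather*}
R(z)=\frac{1}{r_-^{-1}r_+^{-1}-a_0^2}\begin{bmatrix} r_+^{-1}& -a_0\\ -a_0& r_-^{-1}\end{bmatrix},
\end{gather*}
so that everything reduces to computing the scalar prefactor and the four entries.

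First I would record the two ingredients. Equation \eqref{eq:resolventplus} gives $r_+^{-1}=2a_0^2\fA_{21}/(V+D)$, while Lemma~\ref{lem:rminusSecondSolution} gives $r_-^{-1}=(V-D)/(2\fA_{21})$. Multiplying these produces $r_-^{-1}r_+^{-1}=a_0^2(V-D)/(V+D)$, whence the determinant factor
\begin{gather*}
r_-^{-1}r_+^{-1}-a_0^2=\frac{-2a_0^2 D}{V+D}.
\end{gather*}
Substituting back, the prefactor $(V+D)$ cancels the denominators of $r_\pm^{-1}$ in the first row, giving $R_{11}=-\fA_{21}/D$ and $R_{12}=R_{21}=(V+D)/(2a_0 D)$; the split $(V+D)/(2a_0 D)=V/(2a_0 D)+1/(2a_0)$ then matches these against the claimed entries.

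The $(2,2)$ entry is the only one needing more than bookkeeping, and I expect it to be the single point of substance. There one obtains $R_{22}=(V^2-D^2)/(-4a_0^2 D\fA_{21})$, and the numerator must be rewritten as
\begin{gather*}
V^2-D^2=(\fA_{11}-\fA_{22})^2-(\fA_{11}+\fA_{22})^2+4=4-4\fA_{11}\fA_{22}.
\end{gather*}
Here the normalization $\det\fA=1$ (the first item of the preceding remark), i.e.\ $\fA_{11}\fA_{22}=1+\fA_{12}\fA_{21}$, collapses this to $V^2-D^2=-4\fA_{12}\fA_{21}$, so that $R_{22}=\fA_{12}/(a_0^2 D)$. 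Assembling the four entries, pulling out the common factor $1/(2a_0^2 D)$, and collecting the constant off-diagonal remainder $1/(2a_0)$ produced by the split above reproduces the stated matrix; the identity $\det\fA=1$ is the only nontrivial input.
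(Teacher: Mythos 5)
Your proposal is correct and is essentially the paper's own argument: the paper proves the lemma in one line as a consequence of \eqref{eq:ResolventGMP} and Lemma~\ref{lem:rminusSecondSolution}, and your computation simply carries out the implied matrix inversion, substituting \eqref{eq:resolventplus} and $r_-^{-1}=(V-\sqrt{\Delta^A{}^2-4})/(2\fA_{21})$ and using $\det\fA=1$ to simplify the $(2,2)$ entry. All four entries check out, so you have merely made explicit the bookkeeping the paper leaves to the reader.
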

	\begin{proof}
		This is a consequence of \eqref{eq:ResolventGMP} and Lemma \ref{lem:rminusSecondSolution}.
	\end{proof}
	\begin{proof}[Proof of Theorem \ref{thm:spectrumOfperiodicA}]
		We have $\sigma_{ac}(A)=\{z\in\bbC\colon \Delta^A(z)\in[-2,2]\}$. Pure point spectrum can only appear at poles of $\fA_{ij}$. But since $-\Res_{\bc_k}\Delta^A(z)>0$, this is not possible.
	\end{proof}

\section{Functional models}\label{sec:fm}
\subsection{Def\/initions}
We start with a uniformization of the domain $\overline{\bbC}\setminus E$. That is, there exists a Fuchsian group $\Gamma$ and a meromorphic function $\fz\colon \bbD\to\overline{\bbC}\setminus E$ with the following properties:
\begin{itemize}\itemsep=0pt
	\item[(i)]$\forall\, z\in\overline{\bbC}\setminus E\ \exists\,\zeta\in\bbD\colon \fz(\zeta)=z$,
	\item[(ii)]$\fz(\zeta_1)=\fz(\zeta_2)\ \gdw\ \exists\,\gamma\in\Gamma\colon \zeta_1=\gamma(\zeta_2)$.
\end{itemize}
We f\/ix $\fz$ by the normalization condition $\fz(0)=\infty$ and $\lim\limits_{\zeta\to 0}(\zeta\fz)(\zeta)>0.$ By
\begin{gather*}
\Gamma^*=\{\alpha\,|\, \alpha\colon\Gamma\to \bbR/\bbZ \text{ such that } \alpha(\gamma_1\gamma_2)=\alpha(\gamma_1)+\alpha(\gamma_2)\}
\end{gather*}
we denote the group of characters of $\Gamma$. Note that $\Gamma$ is equivalent to the fundamental group of~$\overline{\bbC}\setminus E$ and therefore $\Gamma^*\cong \bbR^g/\bbZ^g$. Let $H^2=H^2(\bbD)$ denote the standard Hardy space of the disk. For $\alpha\in\Gamma^*$ we def\/ine the Hardy space of character automorphic functions by
\begin{gather*}
	H^2(\alpha)=\big\{f\in H^2\colon f\circ\gamma=e^{2\pi i \alpha(\gamma)}f,\, \gamma\in\Gamma\big\}.
\end{gather*}
Fix $z_0\in\overline{\bbC}\setminus E$ and consider the associated orbit $\operatorname{orb}(\zeta_0)=\fz^{-1}(z_0)=\{\gamma(\zeta_0)\colon \gamma\in\Gamma\}$. The Blaschke product $b_{z_0}(\zeta)$ with zeros at $\fz^{-1}(z_0)$ is called the Green function of the group~$\Gamma$; cf.~\cite{Pom76}. It is normalized so that $b_{z_0}(0)> 0$ if $z_0\neq \infty$ and $(\fz b_{\infty})(0)>0$. It is related with the standard Green function $G(z,z_0)$ of $\overline{\bbC}\setminus E$ by
\begin{gather*}
\log\frac{1}{|b_{z_0}(\zeta)|}=G(\fz(\zeta),z_0).
\end{gather*}
The function $b_{z_0}$ is character automorphic. We denote the corresponding character by $\mu_{z_0}$. We def\/ine $k^\alpha(\zeta,\zeta_0)=k^\alpha_{\zeta_0}(\zeta)$ as the reproducing kernels of $H^2(\alpha)$, i.e.,
\begin{gather*}
\langle f, k^\alpha_{\zeta_0} \rangle=f(\zeta_0),\qquad\forall\, f\in H^2(\alpha).
\end{gather*}
If $z_0=\infty$, we use the abbreviations $b(\zeta)=b_\infty(\zeta)$, $\mu=\mu_\infty$ and $k^\alpha(\zeta)=k^\alpha_0(\zeta)$.

\subsection{Functional models for Jacobi matrices}
In this section we recall some results from the theory of f\/inite gap Jacobi matrices. In particular, we will use functional models to explain the appearance of a polynomial $T_p$ in the theory of periodic Jacobi matrices.
\begin{Theorem}[\cite{KontaniLast}]	\label{thm:onb}
	The system of functions
	\begin{gather*}
	e_{n}^{\alpha}(\zeta)=b^{n}(\zeta)\frac{k^{\alpha-n\mu}(\zeta)}{\sqrt{k^{\alpha-n\mu}(0)}}
	\end{gather*}
	\begin{itemize}\itemsep=0pt
		\item[$(i)$] forms an orthonormal basis in $H^2(\alpha)$ for $n\in \bbZ_{\geq 0}$ and
		\item[$(ii)$] forms an orthonormal basis in $L^2(\alpha)$ for $n\in\mathbb Z$,
	\end{itemize}
	where
	\begin{gather*}
	L^2(\alpha) = \big\{ f \in L^2(\bbT)\colon f \circ \gamma = e^{2\pi i \alpha(\gamma)} f,~\gamma\in\Gamma \big\}.
	\end{gather*}
\end{Theorem}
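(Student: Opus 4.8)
The plan is to verify the two orthonormality/basis claims by reducing each statement to a known identity about reproducing kernels of the spaces $H^2(\alpha)$ together with the multiplicative action of the Green-function Blaschke product $b$. The essential structural input is that multiplication by $b$ maps $H^2(\alpha)$ isometrically into $H^2(\alpha+\mu)$, since $b$ is character automorphic with character $\mu$ and $|b|=1$ on $\bbT$. More precisely, for $f\circ\gamma = e^{2\pi i\alpha(\gamma)}f$ one has $(bf)\circ\gamma = e^{2\pi i(\alpha+\mu)(\gamma)}(bf)$, and $\|bf\|=\|f\|$ in $L^2(\bbT)$. Thus the factor $b^n$ in $e_n^\alpha$ precisely shifts the character by $n\mu$, which is why the reproducing kernel appearing in $e_n^\alpha$ is taken in $H^2(\alpha-n\mu)$.

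First I would establish the normalization: each $e_n^\alpha$ has norm one. Since $\|b^n k^{\alpha-n\mu}\| = \|k^{\alpha-n\mu}\|$ by the isometry above, and $\|k^\beta_{\zeta_0}\|^2 = k^\beta(\zeta_0,\zeta_0)$ is the standard reproducing-kernel identity, we get $\|b^n k^{\alpha-n\mu}\|^2 = k^{\alpha-n\mu}(0)$, so dividing by $\sqrt{k^{\alpha-n\mu}(0)}$ yields unit vectors. Next I would prove orthogonality of $e_m^\alpha$ and $e_n^\alpha$ for $m<n$. Writing the inner product and pulling out $b^m$ via the isometry reduces it, up to the positive normalizing constants, to
\begin{gather*}
\big\langle b^{\,n-m}k^{\alpha-n\mu},\, k^{\alpha-m\mu}\big\rangle.
\end{gather*}
Because $b^{\,n-m}k^{\alpha-n\mu}\in H^2(\alpha-m\mu)$, the reproducing property of $k^{\alpha-m\mu}=k^{\alpha-m\mu}_0$ evaluates this inner product as the value of $b^{\,n-m}k^{\alpha-n\mu}$ at $\zeta=0$; since $n-m\ge 1$ and $b(0)=0$ (the Green-function Blaschke product vanishes at the base point of the orbit), this value is $0$. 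Hence the system is orthonormal.

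The main obstacle is completeness, i.e., upgrading orthonormality to a basis in $(i)$ $H^2(\alpha)$ for $n\ge 0$ and $(ii)$ $L^2(\alpha)$ for $n\in\bbZ$. For $(i)$ the natural route is the invariant-subspace structure: the closure of $\operatorname{span}\{e_n^\alpha\}_{n\ge 0}$ is the smallest $b$-invariant subspace of $H^2(\alpha)$ containing $k^\alpha_0$, and one must show this exhausts $H^2(\alpha)$. This is exactly a Beurling-type theorem for the character-automorphic Hardy space of the Fuchsian group $\Gamma$: the model space $K_b(\alpha)=H^2(\alpha)\ominus bH^2(\alpha+\mu)$ is one-dimensional and spanned by $k^\alpha_0$, so iterating the decomposition $H^2(\alpha-n\mu)=K_b(\alpha-n\mu)\oplus bH^2(\alpha-(n-1)\mu)$ peels off each $e_n^\alpha$, and completeness follows once one checks $\bigcap_{n\ge 0} b^n H^2(\alpha-n\mu)=\{0\}$. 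That intersection triviality is the genuinely analytic point; it rests on $b$ being a Blaschke product whose zero set (the orbit) accumulates only on $\bbT$, so no nonzero $H^2$ function can be divisible by $b^n$ for all $n$. For $(ii)$ I would then extend to $L^2(\alpha)$ by noting that $\{b^n\}$ and the conjugate directions together generate, using that $|b|=1$ on $\bbT$ lets multiplication by $b$ act as a unitary on $L^2(\alpha)$ shifting the character, so the two-sided span is dense by the same model-space argument applied in both directions. Throughout, the cleanest formulation is to cite the reproducing-kernel and invariant-subspace facts for $\Gamma$-automorphic Hardy spaces from the referenced literature and reduce the proof to the elementary $b(0)=0$ orthogonality computation above.
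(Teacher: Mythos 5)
Your treatment of part $(i)$ coincides with the paper's own proof: the paper decomposes $H^2(\alpha)=\{e_0^\alpha\}\oplus H_0^2(\alpha)$ with $H_0^2(\alpha)=bH^2(\alpha-\mu)$ and iterates, which is exactly your one-dimensional model-space peeling, and your elementary orthogonality computation via $b(0)=0$ is correct. You even supply the completeness step the paper dismisses as ``easy to see'', and your reason is sound: an element of $\bigcap_{n\geq 0}b^nH^2(\alpha-n\mu)$ vanishes to infinite order at $\zeta=0$, hence is identically zero; no uniform kernel bounds are needed on the $H^2$ side. Two index slips should be repaired, though: since multiplication by $b$ raises the character by $\mu$, the model space is $K_b(\alpha)=H^2(\alpha)\ominus bH^2(\alpha-\mu)$, not $bH^2(\alpha+\mu)$, and the iterated step reads $H^2(\alpha-n\mu)=K_b(\alpha-n\mu)\oplus bH^2(\alpha-(n+1)\mu)$, not $bH^2(\alpha-(n-1)\mu)$; your displayed intersection is nevertheless the correct one, so these are typos rather than conceptual errors.

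The genuine gap is in part $(ii)$. ``The same model-space argument applied in both directions'' is not available. The closed span of $\{e_n^\alpha\}_{n\geq -N}$ is $b^{-N}H^2(\alpha+N\mu)$, so two-sided completeness is precisely the statement $\bigcap_{N\geq 1}\big\{b^{-N}H^2(\alpha+N\mu)\big\}^\perp=\{0\}$, and to analyze this one must identify $L^2(\alpha)\ominus H^2(\alpha)$. For a nontrivial Fuchsian group this is \emph{not} a conjugated Hardy space obtained by symmetry: the paper, quoting Volberg--Yuditskii, uses $L^2(\alpha)\ominus H^2(\alpha)=\Theta\overline{H_0^2(\nu-\alpha)}$, where $\Theta$ is the inner part of $b'$ and $\nu$ its character --- a theorem tied to the direct Cauchy theorem on Widom domains, not a formal conjugation. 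This is exactly the content of the paper's Lemma~\ref{lem:L2complete}: one passes to $\Theta\overline{f}\in\bigcap_n\theta^nH^2(\nu-\alpha-n\chi)$ and then needs the reproducing kernels $k^{\nu-\alpha-n\chi}_{\zeta_0}(\zeta_0)$ to be uniformly bounded in $n$ (for finite gap sets this follows from continuity of the kernel in the character on the compact torus $\Gamma^*$); note the bound $|g(\zeta_0)|\leq\|g\|\,|\theta(\zeta_0)|^{n}\,k^{\nu-\alpha-n\chi}_{\zeta_0}(\zeta_0)^{1/2}$ degenerates without it, since the character varies with $n$. Your statement ``conjugate directions together generate'' asserts the conclusion rather than proving it; as written, the argument would fail at exactly this point. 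The fix is short: invoke Lemma~\ref{lem:L2complete} with $\theta=b$ and $\chi=\mu$ (or cite the corresponding result of [KontaniLast], as the paper itself does for the Jacobi case, where it proves only~$(i)$).
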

\begin{proof}
	We only prove $(i)$. $H^2(\alpha)$ can be decomposed into
	\begin{gather}\label{eq:orthDecomJacobi}
		H^2(\alpha)=\{e_0^\alpha\}\oplus H^2_0(\alpha),
	\end{gather}
	where
	$
		H_0^2(\alpha)=\{f\in H^2(\alpha)\colon f(0)=0\}.
	$
	Since
	$
		H_0^2(\alpha)=bH^2(\alpha-\mu),
	$
	 iterating the previous step leads to
	\begin{gather*}
		H^2(\alpha)=\{e_0^\alpha\}\oplus\big\{be_0^{\alpha-\mu}\big\}\oplus\big\{b^2e_0^{\alpha-2\mu}\big\}\oplus\cdots.
	\end{gather*}
	It is easy to see that this system is complete.
\end{proof}

The following theorem describes a one-to-one correspondence between $\Gamma^*$ and $J(E)$.
\begin{Theorem}[\cite{KontaniLast}]	\label{thm:multbyz}
	The multiplication operator by $\fz$ in $L^2(\alpha)$ with respect to the basis $\{e_n^{\alpha}\}$ from Theorem~{\rm \ref{thm:onb}}
	is the following Jacobi matrix $J=J(\alpha)$:
	\begin{gather*}
	\fz e_{n}^{\alpha}=a_{n}(\alpha)e_{n-1}^{\alpha} + b_n(\alpha)e_{n}^{\alpha}+a_{n+1}(\alpha)e^{\alpha}_{n+1},
	\end{gather*}
	where
	\begin{gather*}
	a_n(\alpha)=\mathcal A(\alpha-n\mu), \qquad\mathcal A(\alpha)=(\fz b)(0)\sqrt{\frac{k^{\alpha}(0)}{k^{\alpha+\mu}(0)}}
	\end{gather*}
	and
	\begin{gather*}
	b_n(\alpha)=\mathcal B(\alpha-n\mu),\qquad \mathcal B(\alpha)=\frac{(\fz b)(0)}{b^{\prime}(0)}+
	\left\{ \frac{\left(k^{\alpha}\right)^{\prime}(0)}{k^{\alpha}(0)}- \frac{\left(k^{\alpha+\mu}\right)^{\prime}(0)}{k^{\alpha+\mu}(0)}\right\}
	+\frac{\left(\fz b\right)^{\prime}(0)}{b^{\prime}(0)}.
	\end{gather*}
	This Jacobi matrix $J(\alpha)$ belongs to $J(E)$. Thus, we have a map from $\Gamma^{*}$ to $J(E)$. Moreover, this map is one-to-one.
\end{Theorem}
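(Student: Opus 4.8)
The plan is to realize the multiplication operator $M_{\fz}$ on $L^2(\alpha)$ as a matrix in the orthonormal basis $\{e_n^\alpha\}$ of Theorem~\ref{thm:onb}, read off its entries, and then identify the resulting operator as a member of $J(E)$. Two structural facts drive the first step. Since $\fz(\gamma\zeta)=\fz(\zeta)$ the map $\fz$ is $\Gamma$-automorphic, and since $\fz(\mathbb{T})=E\subset\mathbb{R}$ it is real a.e.\ on $\mathbb{T}$; hence $M_{\fz}$ is a bounded self-adjoint operator on $L^2(\alpha)$ and its matrix is real symmetric. Next, from $e_n^\alpha=b^n e_0^{\alpha-n\mu}$ together with $|b|=1$ on $\mathbb{T}$ (so multiplication by $b$ is unitary there), the entries obey the covariance
\[
\big\langle \fz\, e_n^\alpha, e_m^\alpha\big\rangle=\big\langle \fz\, e_{n-m}^{\alpha-m\mu}, e_0^{\alpha-m\mu}\big\rangle ,
\]
so every entry depends only on $n-m$ and on the shifted character $\alpha-m\mu$; this is exactly the source of the dependence $a_n(\alpha)=\mathcal{A}(\alpha-n\mu)$, $b_n(\alpha)=\mathcal{B}(\alpha-n\mu)$. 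It then remains to show only $|n-m|\le 1$ survives. By the normalization $\fz$ has a single simple pole (per orbit) at $\zeta=0$ while $b$ has a simple zero there and $(\fz b)(0)>0$; hence for $d\ge 1$ the function $\fz\, e_d^{\beta}$ is analytic, lies in $H^2(\beta)=\overline{\operatorname{span}}\{e_m^\beta:m\ge0\}$, and vanishes to order $d-1$ at $0$. The reproducing property $\langle f,e_0^\beta\rangle=f(0)/\sqrt{k^\beta(0)}$ forces $\langle \fz\, e_d^\beta,e_0^\beta\rangle=0$ for $d\ge 2$, and symmetry disposes of $d\le -2$, so $M_{\fz}$ is tridiagonal.

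Next I would compute the three surviving entries. For the off-diagonal one I use that $\fz b\in H^\infty$ is bounded analytic with character $\mu$: writing $e_1^\delta=b\,e_0^{\delta-\mu}$ and evaluating the analytic function $\fz b\, k^{\delta-\mu}\in H^2(\delta)$ at $0$ via the reproducing property gives
\[
\big\langle \fz\, e_1^\delta,e_0^\delta\big\rangle=(\fz b)(0)\sqrt{k^{\delta-\mu}(0)/k^{\delta}(0)}=\mathcal{A}(\delta-\mu),
\]
and taking $\delta=\alpha-(n-1)\mu$ yields $a_n(\alpha)=\mathcal{A}(\alpha-n\mu)$; note $a_n>0$ since $(\fz b)(0)>0$ and $k^{\,\cdot}(0)>0$, so $J(\alpha)$ is a genuine Jacobi matrix. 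The diagonal entry $\mathcal{B}(\gamma)=\langle \fz\, e_0^\gamma,e_0^\gamma\rangle$ is more delicate because $\fz\, e_0^\gamma$ has a simple pole at $0$. Here I would expand $\fz\, e_0^\gamma$ in a Laurent series about $\zeta=0$, using $\fz=(\fz b)\,b^{-1}$ together with the Taylor expansions of $b$, $b^{-1}$, $\fz b$ and $k^\gamma$. Matching the principal part against $a_0 e_{-1}^\gamma$ reproduces $\mathcal{A}(\gamma)$ as a consistency check, and matching the constant term — after subtracting that principal part and using $e_0^\gamma(0)=\sqrt{k^\gamma(0)}$ — isolates $\mathcal{B}(\gamma)$ in terms of $(\fz b)(0)/b'(0)$, $(\fz b)'(0)/b'(0)$ and the logarithmic derivative difference $(k^\gamma)'(0)/k^\gamma(0)-(k^{\gamma+\mu})'(0)/k^{\gamma+\mu}(0)$. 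A scaling check $\fz\mapsto\lambda\fz$ (which sends $E\mapsto\lambda E$ and $b_n\mapsto\lambda b_n$) confirms the homogeneity of the resulting expression.

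Finally I would verify $J(\alpha)\in J(E)$ and injectivity. The spectrum is immediate: $\sigma(M_{\fz})$ equals the essential range of the boundary values of $\fz$ on $\mathbb{T}$, which is $E$, and, as in the analogue \eqref{eq:spectrumJ}, it is purely absolutely continuous. For reflectionlessness I would express the half-line resolvent functions $r_\pm$ through the model (in the shape of \eqref{eq:ResolventGMP}) as character-automorphic functions and verify \eqref{def:reflectionless}; this follows from the symmetry of the two boundary values of $\fz$ along $E$ (they are complex conjugates across the cut) together with the decomposition $L^2(\alpha)=H^2(\alpha)\oplus\big(L^2(\alpha)\ominus H^2(\alpha)\big)$. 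Hence $J(\alpha)\in J(E)$. Injectivity of $\alpha\mapsto J(\alpha)$ I would obtain by reconstructing $\alpha$ from $J(\alpha)$: its diagonal Weyl function is a character-automorphic object whose divisor of poles in the $g$ gaps is the Dirichlet datum, and the passage from this divisor back to the character is the injective Abel-type correspondence $\Gamma^*\leftrightarrow\text{divisors}$.

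The main obstacle I anticipate is twofold. Computationally, the constant-term extraction for $\mathcal{B}$ is the fiddliest point, since one must carry $b$ (and $b^{-1}$) to second order and $\fz b$ to first order and keep the various reproducing-kernel normalizations straight. Conceptually, the deepest step is the verification of the reflectionless relation \eqref{def:reflectionless}, which is precisely where the fine boundary behavior of the uniformization $\fz$ on $\mathbb{T}$ and the character-automorphic Hardy-space structure are genuinely used; the tridiagonality, by contrast, is forced almost formally by the pole order of $\fz$ and self-adjointness.
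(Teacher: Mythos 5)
Your proposal is correct and is essentially the argument this theorem rests on: the paper itself states the result without proof, citing Volberg--Yuditskii \cite{KontaniLast}, and both that source and the paper's own proof of the GMP analogue (Theorem~\ref{prop:isospectralGMP}) proceed exactly along your lines --- the covariance $e_n^\alpha=b^n e_0^{\alpha-n\mu}$, tridiagonality forced by the simple pole of $\fz$ at $\zeta=0$ together with $\fz b\in H^\infty$, extraction of the entries by reproducing-kernel evaluation after subtracting the pole part (compare the subtraction of $(\fz f_n^\alpha b)(0)k^{\alpha+\mu}/\big(k^{\alpha+\mu}(0)b\big)$ in the proof of Theorem~\ref{prop:isospectralGMP}), and membership in $J(E)$ plus injectivity via reflectionlessness and the divisor/character correspondence as in \cite{KontaniLast,SoYud94}. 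One substantive remark: actually carrying out your constant-term extraction (and checking it on $E=[-2,2]$, where $\fz(\zeta)=\zeta+1/\zeta$, $b(\zeta)=\zeta$, so that $\mathcal B$ must vanish) yields
\begin{gather*}
\mathcal B(\alpha)=\frac{(\fz b)(0)}{b'(0)}\left\{\frac{\left(k^{\alpha}\right)'(0)}{k^{\alpha}(0)}-\frac{\left(k^{\alpha+\mu}\right)'(0)}{k^{\alpha+\mu}(0)}\right\}+\frac{(\fz b)'(0)}{b'(0)},
\end{gather*}
i.e., the first fraction \emph{multiplies} the curly bracket, so the additive ``$+$'' after it in the theorem's displayed formula is a typo and your derivation produces the correct version.
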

\begin{Remark}
 Since $S^{-1}J(\alpha)S=J(\alpha-\mu)$, we see that $J(E)$ consists of $p$-periodic Jacobi matrices if and only if $p\mu=\mathbf{0}_{\Gamma^*}$.
\end{Remark}
Note that $p\mu=\mathbf{0}_{\Gamma^*}$ implies that $b^p(\gamma(\zeta))=b^p(\zeta)$. Therefore, the function, $\Phi(z)$, def\/ined by
\begin{gather*}
	 \Phi(z):=\Phi(\fz(\zeta))=b^p(\zeta)
\end{gather*}
is single valued. Since
\begin{itemize}\itemsep=0pt
	\item[(i)] $|\Phi|<1$ in $\overline{\bbC}\setminus E$ and $|\Phi|=1$ on $E$,
	\item[(ii)] $\Phi$ has a zero of multiplicity $p$ at inf\/inity,
\end{itemize}
it is not hard to show that $\Phi$ is the function given by~\eqref{eq:MagicFormulafm}. Moreover, this def\/inition implies that for all $\alpha\in\Gamma^*$
\begin{gather*}
	T_p(J(\alpha))=S^p+S^{-p},
\end{gather*}
which proves one direction of the magic formula.

\subsection{Functional models for periodic GMP matrices}\label{sec:fmGMP}
We have already mentioned in the introduction that the Ahlfors function, $\Psi$, will serve as an analogue of $\Phi$ for general f\/inite gap sets. Due to Lemma \ref{lem:Delta} and its proof, we obtain the following properties of $\Psi$:
\begin{itemize}\itemsep=0pt
	\item[(i)] $|\Psi|<1$ in $\overline{\bbC}\setminus E$ and $|\Psi|=1$ on $E$.
	\item[(ii)] $\Psi(z)=0\gdw z\in\{\bc_1,\dots,\bc_g\}\cup\{\infty\}$.	
\end{itemize}
All this implies that
\begin{gather*}
\log\frac 1{|\Psi(z)|}=G(z)+\sum_{j=1}^g G(z,\bc_j).
\end{gather*}
Therefore, $\Psi(\fz(\zeta))=b(\zeta)\prod\limits_{j=1}^g b_{\bc_j}(\zeta)$. In particular, $\mu+\sum^g_{j=1}\mu_{\bc_j}=\mathbf{0}_{\Gamma^*}$. We def\/ine the permutation $\pi$ by demanding that $\bc_{j}\in(\ba_{\pi(j)},\bb_{\pi(j)})$. Moreover, we f\/ix generators of the group~$\Gamma$, $\{\gamma_j\}_{j=1}^g$, where $\gamma_j$ corresponds to a closed curve, which starts at $\infty$ and passes through the gap $(\mathbf a_{\pi(j)}, \mathbf{b}_{\pi(j)})$; cf.~\cite[Chapter~9, Section~6]{SimonSzego}. Due to the symmetry of $\bbC_+$ and $\bbC_-$ we can choose a fundamental domain, $\cF\subset\bbD$, of the group, which is symmetric w.r.t.\ $\zeta\mapsto\overline \zeta$. Choosing $\zeta_j\in(\cF\cap\fz^{-1}(\bc_j))$, we have $\overline \zeta_j=\gamma_j(\zeta_j)$. Let us f\/ix these $\zeta_{j}\in\bbD$.

The relation \eqref{eq:DeltaAhlfors} together with the factorization of $\Psi$ into Blaschke products suggests to consider the following counterpart of \eqref{eq:orthDecomJacobi}. Let $\beta_n=\alpha-\sum\limits_{k=1}^{n}\mu_{\bc_k}$ and $\eta_n=\sqrt{\exp(-2\pi i\beta_n(\gamma_{n+1}))}$. Then
\begin{gather*}
H^2(\alpha)=\{k^{\alpha}_{\zeta_{1}},\dots, k^{\alpha}_{\zeta_{g}}, k^{\alpha}\}\oplus\Psi H^2(\alpha)
=\{f^{\alpha}_0\}\oplus\dots\oplus\{f^{\alpha}_g\}\oplus\Psi H^2(\alpha),
\end{gather*}
where
\begin{gather*}
f_0^{\alpha}=\eta_0\frac{k^\alpha_{\zeta_{1}}}{\sqrt{k^\alpha_{\zeta_{1}}(\zeta_{1})}}, \qquad
f_1^\alpha=\eta_1\frac{b_{\bc_1} k_{\z_{2}}^{\alpha-\mu_{\bc_1}}}{\sqrt{k_{\z_{2}}^{\alpha-\mu_{\bc_1}}(\z_{2})}},\qquad \dots,
\qquad f_g^\alpha=\frac{\prod_{j=1}^g b_{\bc_j} k^{\alpha+\mu}}{\sqrt{k^{\alpha+\mu}(0)}}.
\end{gather*}
\begin{Remark}
	The unimodular factors $\eta_n$ are chosen such that the matrix representation of multiplication by $\fz$ w.r.t.\ this basis, i.e., the corresponding GMP matrix is real. Note that the square root of $e^{-2\pi i\beta_n(\gamma_{n+1})}$ is def\/ined up to the choice of a multiplicative constant $\pm 1$. Thus, in fact to a~given character $\alpha$ we associate $2^g$ bases and therefore GMP matrices. This is the reason for the identif\/ication $(p_j,q_j)\to (-p_j,-q_j)$ in Theorem~\ref{prop:isospectralGMP}.
\end{Remark}
In order to prove completeness of the system in $L^2(\alpha)$ we show the following lemma.
\begin{Lemma}\label{lem:L2complete}
	Let $\theta\neq 1$ be a character automorphic inner function with character $\chi$. Then
	 \begin{gather*}
		\bigcap_{n=1}^\infty\big\{\theta^{-n}H^2(\alpha+n\chi)\big\}^\perp=\{0\}.
	\end{gather*}
\end{Lemma}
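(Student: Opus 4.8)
The plan is to pass to the equivalent dual statement and then reduce it to a classical fact about ordinary inner functions on $\bbD$. The subspaces $X_n:=\theta^{-n}H^2(\alpha+n\chi)$ all lie in $L^2(\alpha)$, since the boundary values of $\theta$ are unimodular and so $\theta^{-n}$ maps $L^2(\alpha+n\chi)$ unitarily onto $L^2(\alpha)$; moreover they are nested increasingly, because $\theta H^2(\alpha+n\chi)\subseteq H^2(\alpha+(n+1)\chi)$ gives $X_n\subseteq X_{n+1}$. Hence $\bigcap_n X_n^\perp=\{0\}$ is equivalent to the density of $\bigcup_{n\ge 1}X_n$ in $L^2(\alpha)$, and I would fix $f\in L^2(\alpha)$ with $f\perp X_n$ for every $n\ge 1$ and show $f=0$.

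First I would rewrite the orthogonality intrinsically. Using $\overline{\theta^{-n}}=\theta^{\,n}$ on $\bbT$, one has $\langle f,\theta^{-n}g\rangle_{L^2(\alpha)}=\langle \theta^n f,g\rangle_{L^2(\alpha+n\chi)}$, so $f\perp X_n$ is the same as $\theta^n f\perp H^2(\alpha+n\chi)$ inside $L^2(\alpha+n\chi)$. Here I would invoke the character automorphic analogue of the classical splitting $L^2(\bbT)=H^2\oplus\overline{H^2_0}$, namely that the orthogonal complement of $H^2(\beta)$ in $L^2(\beta)$ consists of conjugates of functions in $H^2(-\beta)$. This is read off from Theorem~\ref{thm:onb}: $\{e_n^\beta\}_{n\in\bbZ}$ is an orthonormal basis of $L^2(\beta)$ with $\{e_n^\beta\}_{n\ge0}$ spanning $H^2(\beta)$, while the symmetric choice of the fundamental domain and the normalization give the conjugation symmetry $e_{-n}^\beta=\overline{e_n^{-\beta}}$ for $n\ge 1$, whence the complement $\overline{\mathrm{span}}\{e_n^\beta:n<0\}$ equals $\overline{H^2(-\beta)}$. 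Applying this with $\beta=\alpha+n\chi$ yields $\overline{\theta^n f}\in H^2(-\alpha-n\chi)\subseteq H^2(\bbD)$; since $\overline{\theta^n f}=\theta^{-n}\bar f$ on $\bbT$, this says precisely that $\bar f\in\theta^{\,n}H^2(\bbD)$ for every $n\ge 1$.

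Finally I would apply the scalar fact $\bigcap_{n\ge1}\theta^{\,n}H^2(\bbD)=\{0\}$, valid because $\theta\ne1$ is a nonconstant inner function: the inner factor of a nonzero $H^2$ function has a convergent Blaschke sum and a finite singular mass, and so cannot be divisible by $\theta^{\,n}$ for all $n$, since the Blaschke zeros (resp. the singular measure) of $\theta^{\,n}$ grow linearly in $n$. Thus $\bar f=0$, i.e.\ $f=0$, which gives the claimed density and hence $\bigcap_n X_n^\perp=\{0\}$.

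The main obstacle is the complement characterization $(H^2(\beta))^\perp=\overline{H^2(-\beta)}$ (equivalently $\overline{H^2_0(-\beta)}$ if one keeps track of the value at $0$). The easy inclusion $\overline{H^2_0(-\beta)}\subseteq(H^2(\beta))^\perp$ follows from the mean value property: for $g\in H^2(\beta)$ and $\phi\in H^2_0(-\beta)$ the product $g\phi$ is a $\Gamma$-invariant $H^1$ function, so $\langle g,\bar\phi\rangle=(g\phi)(0)=0$. The reverse inclusion is where the real content sits and relies on the completeness and the conjugation symmetry of the reproducing kernel basis from Theorem~\ref{thm:onb}. I would also flag that the hypothesis $\theta\ne1$ must be understood as $\theta$ being genuinely nonconstant (with $|\theta|<1$ in $\bbD$): for a unimodular constant $\ne1$, whose character is trivial, the union $\bigcup_n X_n$ collapses to $H^2(\alpha)$ and the statement fails.
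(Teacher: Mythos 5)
Your reduction steps are sound and match the paper's opening moves: the identity $\langle f,\theta^{-n}h\rangle=\langle\theta^n f,h\rangle$, the nestedness of the subspaces, and the closing scalar fact are all fine, and your caveat that $\theta\neq 1$ must be read as \emph{nonconstant} is correct (a unimodular constant $\neq 1$ has trivial character and gives a genuine counterexample; both your argument and the paper's use $|\theta(\zeta_0)|<1$). The genuine gap is the dual complement identity on which everything hinges. You claim $\bigl(H^2(\beta)\bigr)^\perp=\overline{H^2_0(-\beta)}$ inside $L^2(\beta)$, justified by a conjugation symmetry $e_{-n}^\beta=\overline{e_n^{-\beta}}$ of the basis of Theorem~\ref{thm:onb}. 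That symmetry holds only in the disk case $\Gamma=\{\mathrm{id}\}$, where $b(\zeta)=\zeta$ and $e_n=\zeta^n$. For a nontrivial Fuchsian group it fails: $e_{-1}^\beta=b^{-1}k^{\beta+\mu}/\sqrt{k^{\beta+\mu}(0)}$, and equality with $\overline{e_1^{-\beta}}$ would force the boundary values of the holomorphic kernel $k^{\beta+\mu}$ to coincide with those of an antiholomorphic function, i.e., force $k^{\beta+\mu}$ to be constant. The correct statement, which the paper imports from \cite{KontaniLast}, is
\begin{gather*}
L^2(\alpha)\ominus H^2(\alpha)=\Theta\,\overline{H_0^2(\nu-\alpha)},
\end{gather*}
where $\Theta$ is the inner part of $b'$ and $\nu$ its character. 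For a finite gap set with $g\geq 1$, $b'$ vanishes at the $g$ critical points of the Green's function in the fundamental domain, so $\Theta$ is a nontrivial Blaschke product and your candidate $\overline{H^2_0(-\beta)}$ is a \emph{proper} subspace of the true complement: indeed $\Theta h\in H^2_0(\nu-\beta)$ for $h\in H^2_0(-\beta)$ gives the inclusion, while equality would force every element of $H^2_0(\nu-\beta)$ to vanish at the critical points, which suitable reproducing kernels rule out. Your mean-value computation proves only the easy inclusion, as you yourself flag; the reverse inclusion you rely on is not merely unproved but false.

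The good news is that your scheme survives once the correct complement is substituted, and it then essentially reproduces the paper's proof with a different ending. From $f\perp\theta^{-n}H^2(\alpha+n\chi)$ one gets $\theta^n f=\Theta\overline{g_n}$ with $g_n\in H^2_0(\nu-\alpha-n\chi)$, hence $\Theta\bar f\in\bigcap_{n\geq 1}\theta^n H^2(\nu-\alpha-n\chi)$; note the twist by $\Theta$ and the shifting characters, which your version missed. The paper kills this intersection by the reproducing-kernel estimate $|g(\zeta_0)|^2\leq\|g\|^2|\theta(\zeta_0)|^{2n}k^{\nu-\alpha-n\chi}_{\zeta_0}(\zeta_0)$ together with uniform boundedness of $k^{\beta}_{\zeta_0}(\zeta_0)$ over the compact character group. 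Your alternative tail, embedding $H^2(\nu-\alpha-n\chi)\subseteq H^2(\bbD)$ and invoking the classical fact $\bigcap_{n\geq 1}\theta^n H^2(\bbD)=\{0\}$ for nonconstant inner $\theta$, is equally valid and avoids the uniform kernel bound; that is a legitimate small simplification, but it does not repair the missing Volberg--Yuditskii identity, which is the real content of the lemma.
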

\begin{proof}
	In the following we will use that
	\begin{gather*}
		L^2(\alpha)\ominus H^2(\alpha)=\Theta\overline{H_0^2(\nu-\alpha)},
	\end{gather*}
	where $\Theta$ is the inner part of the derivative of~$b$; cf.~\cite{KontaniLast}. It is character automorphic and we denote its character by~$\nu$. Let $f\in\bigcap_{n=1}^\infty\left\{\theta^{-n}H^2(\alpha+n\chi)\right\}^\perp$. Thus, for all $n\in\bbN$
	\begin{gather*}
		0=\langle f, \theta^{-n} h\rangle=\langle \theta^{n}f, h\rangle, \qquad \forall\, h\in H^2(\alpha+n\chi).
	\end{gather*}
	Therefore, $\theta^nf\in H^2(\alpha+n\chi)^\perp=\Theta\overline{H_0^2(\nu-\alpha-n\chi)}$. Hence, there exists $g^{\nu-\alpha-n\chi}\in H_0^2(\nu-\alpha-n\chi)$ such that $\Theta\overline{f}=\theta^ng^{\nu-\alpha-n\chi}$, i.e.,
	\begin{gather*}
		\Theta\overline{f}\in\bigcap_{n=1}^\infty\theta^n H^2(\nu-\alpha-n\chi).
	\end{gather*}
	For $g\in \bigcap_{n=1}^\infty\theta^n H^2(\nu-\alpha-n\chi)$ and arbitrary $\zeta_0\in\bbD$, we see that
	\begin{gather*}
		|g(\zeta_0)|^2=\big|\langle g,\theta^n\overline{\theta^n(\zeta_0)}k^{\nu-\alpha-n\chi}_{\zeta_0}\rangle\big|^2\leq\|g\|^2|\theta(\zeta_0)|^{2n}k^{\nu-\alpha-n\chi}_{\zeta_0}(\zeta_0).
	\end{gather*}
	Since $k^{\nu-\alpha-n\chi}_{\zeta_0}(\zeta_0)$ is uniformly bounded in $n$ from above, the right-hand side converges to zero as $n\to\infty$ and hence $g(\zeta_0)=0$.
\end{proof}
\begin{Theorem}	\label{thm:onbsmp}
	The system of functions
	\begin{gather*}
	f_{n}^{\alpha}=f_{n}^{\alpha}(\z;\bc_1,\dots,\bc_g)=\Psi^m f_j^\alpha,\qquad
	n=(g+1)m+j, \qquad 0\leq j\leq g,
	\end{gather*}
	\begin{itemize}\itemsep=0pt
		\item[$(i)$] forms an orthonormal basis in $H^2(\alpha)$ for $n\in \bbZ_{\geq 0}$ and
		\item[$(ii)$] forms an orthonormal basis in $L^2(\alpha)$ for $n\in\mathbb Z$.
	\end{itemize}
\end{Theorem}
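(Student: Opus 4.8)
The plan is to reduce both claims to a single model-space decomposition and then iterate, exactly as in the Jacobi case of Theorem~\ref{thm:onb}, with Lemma~\ref{lem:L2complete} supplying the $L^2$-completeness. The starting point is the decomposition recorded above, $H^2(\alpha)=K_\Psi(\alpha)\oplus\Psi H^2(\alpha)$, where $K_\Psi(\alpha)=H^2(\alpha)\ominus\Psi H^2(\alpha)$ and $\{f_0^\alpha,\dots,f_g^\alpha\}$ is an orthonormal basis of $K_\Psi(\alpha)$. First I would justify this basis claim by the iterated reproducing-kernel peeling: $f_0^\alpha$ is the normalized kernel $k^\alpha_{\zeta_1}$, so $H^2(\alpha)=\{f_0^\alpha\}\oplus\{f\colon f(\zeta_1)=0\}$ with $\{f\colon f(\zeta_1)=0\}=b_{\bc_1}H^2(\alpha-\mu_{\bc_1})$; peeling off $f_1^\alpha$ (the $b_{\bc_1}$-multiple of the normalized kernel at $\zeta_2$ inside $H^2(\alpha-\mu_{\bc_1})$), and continuing through the nodes $\zeta_1,\dots,\zeta_g$ and finally the kernel at $0$, leaves the remainder $\big(\prod_{j=1}^g b_{\bc_j}\big)b\,H^2(\alpha)=\Psi H^2(\alpha)$. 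Here the identity $\mu+\sum_{j=1}^g\mu_{\bc_j}=\mathbf 0_{\Gamma^*}$ closes the character bookkeeping, and the unimodular factors $\eta_n$ only fix phases and do not affect orthonormality.

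For part $(i)$ I would use that $\Psi$ is inner with trivial character, so multiplication by $\Psi$ is an isometry of $H^2(\alpha)$ onto $\Psi H^2(\alpha)$ carrying $\{f_j^\alpha\}$ into $\Psi H^2(\alpha)\perp K_\Psi(\alpha)$. Iterating the decomposition gives $H^2(\alpha)=\bigoplus_{m\geq 0}\Psi^m K_\Psi(\alpha)$, whence $\{f_n^\alpha\}_{n\geq 0}=\{\Psi^m f_j^\alpha\}$ is orthonormal: for fixed $m$ by the isometry, and for $m<m'$ because $\langle\Psi^m u,\Psi^{m'}v\rangle=\langle u,\Psi^{m'-m}v\rangle=0$ with $u\in K_\Psi(\alpha)\perp\Psi H^2(\alpha)$. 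Completeness is equivalent to $\bigcap_{N\geq 1}\Psi^N H^2(\alpha)=\{0\}$, which I would get from the reproducing-kernel estimate: writing $g=\Psi^N h_N$ with $\|h_N\|=\|g\|$ gives $|g(\zeta_0)|\leq|\Psi(\zeta_0)|^N\|g\|\sqrt{k^\alpha_{\zeta_0}(\zeta_0)}\to 0$, since $|\Psi(\zeta_0)|<1$ in $\bbD$.

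For part $(ii)$, on $L^2(\alpha)$ multiplication by $\Psi$ is unitary because $|\Psi|=1$ on $\bbT$, so the orthonormality of $\{f_n^\alpha\}$ extends verbatim to all $n\in\bbZ$ (same $m$: unitary image; $m<m'$: the $L^2$ inner product restricted to $H^2(\alpha)$ agrees with the $H^2$ one, and $K_\Psi(\alpha)\perp\Psi H^2(\alpha)$). For completeness the key observation is that $\{f_n^\alpha\}_{n\geq-(g+1)N}$ spans exactly $\Psi^{-N}H^2(\alpha)=\bigoplus_{m\geq -N}\Psi^m K_\Psi(\alpha)$, so the closed span of $\{f_n^\alpha\}_{n\in\bbZ}$ is the closure of $\bigcup_{N\geq 1}\Psi^{-N}H^2(\alpha)$, whose orthogonal complement is $\bigcap_{N\geq 1}\{\Psi^{-N}H^2(\alpha)\}^\perp$. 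This vanishes by Lemma~\ref{lem:L2complete} applied with $\theta=\Psi$, an inner character automorphic function with $\theta\neq 1$ and trivial character $\chi=\mathbf 0_{\Gamma^*}$ (so that $\alpha+N\chi=\alpha$), which completes $(ii)$.

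The one genuinely substantive input is Lemma~\ref{lem:L2complete}, already proved; the rest is the standard inner-function/Beurling iteration. Accordingly, the step needing the most care in writing up is the character bookkeeping in the peeling argument and the clean identification $\{f_n^\alpha\}_{n\geq-(g+1)N}=\Psi^{-N}H^2(\alpha)$, together with verifying that $\Psi$ satisfies the hypotheses of Lemma~\ref{lem:L2complete}. Everything else parallels Theorem~\ref{thm:onb} line for line.
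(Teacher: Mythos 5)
Your proposal is correct and follows essentially the same route as the paper: orthogonality comes from the kernel-peeling decomposition $H^2(\alpha)=K_\Psi(\alpha)\oplus\Psi H^2(\alpha)$ together with the fact that $\Psi$ is inner with trivial character, and the $L^2$-completeness is exactly Lemma~\ref{lem:L2complete} applied with $\theta=\Psi$ and $\chi=\mathbf{0}_{\Gamma^*}$, which is precisely the paper's (two-line) argument. Your additional direct verification of $\bigcap_{N\geq 1}\Psi^N H^2(\alpha)=\{0\}$ for part $(i)$ is sound and simply makes explicit an estimate of the same kind as the one inside that lemma.
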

\begin{proof}
	By construction, this system is orthogonal. The completeness of the second system follows by the previous lemma.
\end{proof}
\begin{proof}[Proof of Theorem~\ref{prop:isospectralGMP}]
	 We show that $A(\alpha)$ is a $\GMP$ matrix. Clearly, $A(\alpha)$ is $g+1$-periodic. Let $P_-^\alpha$ be the projection onto $L^2(\alpha)\ominus H^2(\alpha)$. Then we have
	 \begin{gather*}
	 P_-^\alpha(\fz f_n^\alpha)= p_n(\alpha) f_{-1}^\alpha,\qquad j=0,\dots,g.
	 \end{gather*}
	 Since $\fz$ is self-adjoint and $A(\alpha)$ has constant block-coef\/f\/icients, this shows that $A(\vec p)$ has the right structure.We have
	 \begin{gather*}
	 p_n(\alpha)=\langle \fz f_n^\alpha, f_{-1}^\alpha\rangle=\frac{(\fz b)(0)\eta_n\prod\limits_{k=1}^{n}b_{\bc_k}(0)k^{\beta_n}(0,\zeta_{n+1})}{\sqrt{k^{\beta_n}_{\zeta_{n+1}}(\zeta_{n+1})k^{\alpha+\mu}(0)}}
	 \end{gather*}
	 for $0\leq n\leq g$. Since $k^\alpha(\bar{\zeta_n})=\overline{k^\alpha(\zeta_n,0)}$, we obtain
	 \begin{gather*}
	 	\overline{k^{\beta_n}(0,\zeta_{n+1})}
	 	=k^{\beta_n}(\zeta_{n+1},0)
	 	=\overline{k^{\beta_n}(\gamma_{n+1}(\zeta_{n+1}),0)}
	 	=\eta_n^2k^{\beta_n}(0,\zeta_{n+1}).
	 \end{gather*}
	 Thus, $p_n(\alpha)$ are real. In particular, $p_g(\alpha)>0$. We def\/ine
	 \begin{gather*}	 	q_m(\alpha)=-\frac{\overline{\eta_m}\sqrt{k^{\alpha+\mu}(0)}k^{\beta_m+\mu}(\zeta_{m+1})}{\prod\limits_{k=1}^{m}b_{\bc_k}(0)b(\zeta_{m+1})k^{\beta_m+\mu}(0)\sqrt{k^{\beta_m}_{\zeta_{m+1}}(\zeta_{m+1})}}
	 \end{gather*}
	 Let $n>m$. Note that $\fz f_n^\alpha$ has a simple poles at $\fz^{-1}(\infty)$. Therefore,
	 \begin{gather*}
	 	\fz f_n^\alpha-\frac{(\fz f_n^\alpha b)(0)k^{\alpha+\mu}}{k^{\alpha+\mu}(0)b}\in H^2(\alpha).
	 \end{gather*}
	 Using this and the fact that $(\fz f_n^\alpha)(\zeta_{m+1})=0$, we obtain
	 \begin{gather*}
\begin{split}
& 	\langle\fz f_n^\alpha,f_m^\alpha\rangle= -\frac{\eta_n\overline{\eta_m}}{\sqrt{k^{\beta_{n}}_{\zeta_{n+1}}(\zeta_{n+1})}\sqrt{k^{\beta_{m}}_{\zeta_{m+1}}(\zeta_{m+1})}}\\
& \hphantom{\langle\fz f_n^\alpha,f_m^\alpha\rangle= }{}
	 	\times\frac{(\fz b)(0)\prod\limits_{k=m+1}^{n}b_{\bc_k}(0)k_{\zeta_{n+1}}^{\beta_n}(0)}{k^{\beta_m+\mu}(0)}\frac{k^{\beta_m+\mu}(\zeta_{m+1})}{b(\zeta_{m+1})}
	 	=p_n(\alpha)q_m(\alpha),
\end{split}
	 \end{gather*}
	 by def\/inition. Using in addition that $b(\overline \zeta)=\overline{b( \zeta)}$ we obtain in the same way as before that
	 \begin{gather*}
	 	\frac{\overline{k^{\beta_m+\mu}(\zeta_{m+1},0)}}{\overline{b(\zeta_{m+1})}}=	\frac{\overline{\eta_m}^2k^{\beta_m+\mu}(\zeta_{m+1},0)}{b(\zeta_{m+1})}.
	 \end{gather*}
	 Hence $q_m(\alpha)\in\bbR$. Finally, we consider the diagonal terms, i.e.,
	 \begin{gather*}
	 \langle\fz f_n^\alpha,f_n^\alpha\rangle=\bc_{n+1}-\frac{(\fz b)(0)k_{\zeta_{n+1}}^{\beta_n}(0)}{k_{\zeta_{n+1}}^{\beta_n}(\zeta_{n+1})k^{\beta_n+\mu}(0)}
\frac{k^{\beta_n+\mu}(\zeta_{n+1})}{b(\zeta_{n+1})}=\bc_{n+1}+p_n(\alpha)q_n(\alpha).
	 \end{gather*}
	 The structure of the resolvents given in Def\/inition \ref{def:GMP} follows by the conformal invariance of the Ahlfors function. More specif\/ically, if $w=w_j=\frac{1}{\bc_j-z}$, then $\Psi_j(w):=\Psi(z)$ is the Ahlfors function in the $w$-plane. The given ordering $\bC$ generates the specif\/ic ordering
	 \begin{gather*}
	 \bC_j=\left\{\frac 1{\bc_{j+1}-\bc_j},\dots,\frac 1{\bc_{g}-\bc_j},0,\frac 1{\bc_{1}-\bc_j},\dots,\frac 1{\bc_{j-1}-\bc_j}\right\}
	 \end{gather*}
	 and the multiplication by $w$ is again a periodic GMP matrix (up to an appropriate shift). That is,
	 \begin{gather*}
	 S^{-j}(\bc_j-A(\alpha,\bC))^{-1}S^j\in A(E_j,\bC_j),
	 \end{gather*}
	 where $E_j=\{y=\frac 1{c_j-x}\colon x\in E\}$. This shows that $A(\alpha)\in\GMP(\bC)$. Hence, $A(\alpha)\in A(E)$.
	
Now, we turn to the map $A(E,\bC)\to \Gamma^*$. To $A\in A(E,\bC)$, we associate the resolvent func\-tions~$a_0^2r_+$ and~ $r_-^{-1}$. Due to~\eqref{eq:resolventplus}, Lemma~\ref{lem:rminusSecondSolution} and Theorem~\ref{thm:spectrumOfperiodicA}, $A$ is ref\/lectionless on~$E$. Hence, we can apply the construction of \cite[Sections~3 and~4]{KontaniLast} to obtain $\alpha\in\Gamma^*$. Due to uniqueness of the associated character $\alpha$, this map is one-to-one, up to the identif\/ication $(p_k,q_k)\mapsto(-p_k,-q_k)$.
\end{proof}

\subsection[The magic formula and parametrization of $A(E,\bC)$]{The magic formula and parametrization of $\boldsymbol{A(E,\bC)}$}
Let us turn to the proof of the magic formula and Theorem~\ref{cor:IsospectralAlgebraicManifold}. The following lemma describes the coef\/f\/icients of~$\Delta^A$ in terms of~$A$.
\begin{Lemma}\label{lem:coeffofDeltaA}
	Let $A\in\GMP(\bC)$ be a periodic GMP matrix with generating coefficients $\vbp$ and~$\Delta^A(z)$ its discriminant, cf.~\eqref{def:discriminant}. Then~$\Delta^A$ is a rational function with simple poles at~$\bc_k$ and infinity, i.e.,
	\begin{gather*}
	\Delta^A(z)=d_0+\nu_0z+\sum_{k=1}^{g}\frac{\nu_k}{z-\bc_k}.
	\end{gather*}
	Moreover, the coefficients are given by
	\begin{gather*}
	\nu_0=\frac 1{p_g}, \qquad d_0=-q_g-\nu_0\sum_{j=1}^{g-1} p_jq_j, \qquad \nu_k=\Lambda_k(\vbp)\qquad \text{for} \quad k=1,\dots ,g,
	\end{gather*}
	where $\Lambda_k$ is defined in~\eqref{eq:defLambdaK}.
\end{Lemma}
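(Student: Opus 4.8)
The plan is to read the pole and partial-fraction structure of $\Delta^A(z)=\tr\fA(z)$, cf.~\eqref{def:discriminant}, directly off the factorization $\fA(z)=\fa(z,\bc_1;\bp_0)\cdots\fa(z,\bc_g;\bp_{g-1})\fa(z;\bp_g)$. First I would record the elementary behaviour of the individual factors. For $1\le k\le g$ the factor $\fa(z,\bc_k;\bp_{k-1})=I-\frac{1}{\bc_k-z}\bp_{k-1}\bp_{k-1}^*\fj$ is rational with a single simple pole at $z=\bc_k$, tends to $I$ as $z\to\infty$, and has residue $\bp_{k-1}\bp_{k-1}^*\fj$ there; the last factor $\fa(z;\bp_g)$ from \eqref{thematrix} is affine in $z$ and regular on all of $\bbC$. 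Since the $\bc_k$ are distinct, exactly one factor is singular at each $\bc_k$, so $\fA(z)$ is a rational matrix function whose only singularities are simple poles at $\bc_1,\dots,\bc_g$ together with a pole at infinity coming solely from the affine factor. Taking the trace, $\Delta^A$ is a scalar rational function of the asserted form, and it remains to identify the coefficients $\nu_0,\dots,\nu_g$ and $d_0$.

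For the residues at the finite poles I would use that at $z=\bc_k$ only $\fa(z,\bc_k;\bp_{k-1})$ contributes, so that
\[
\Res_{\bc_k}\fA(z)=\Big(\prod_{m=0}^{k-2}\fa(\bc_k,\bc_{m+1};\bp_m)\Big)\bp_{k-1}\bp_{k-1}^*\fj\Big(\prod_{m=k}^{g-1}\fa(\bc_k,\bc_{m+1};\bp_m)\Big)\fa(\bc_k;\bp_g),
\]
all regular factors being evaluated at $\bc_k$. Taking the trace and comparing with the right-hand side of \eqref{eq:defLambdaK} then yields $\nu_k=\Lambda_k(\vbp)$ for $1\le k\le g$.

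For the coefficients at infinity I would expand for large $z$: each finite-pole factor satisfies $\fa(z,\bc_k;\bp_{k-1})=I+z^{-1}\bp_{k-1}\bp_{k-1}^*\fj+\mathcal O(z^{-2})$, while $\fa(z;\bp_g)=zL+N$ with $L=\Diag(0,1/p_g)$ and $N$ its constant part. Multiplying out gives $\fA(z)=zL+(N+\Sigma L)+\mathcal O(z^{-1})$ with $\Sigma=\sum_{j=0}^{g-1}\bp_j\bp_j^*\fj$. Reading off the trace then yields $\nu_0=\tr L=1/p_g$, and the constant term $d_0=\tr N+\tr(\Sigma L)$ collapses, after inserting the entries of $N$, $\Sigma$ and $L$, to the stated constant $d_0=-q_g-\nu_0\sum_j p_jq_j$.

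I expect two points to require genuine care. The first is matching $\tr\Res_{\bc_k}\fA$ against the rather long product in \eqref{eq:defLambdaK}, and in particular fixing the residue convention (the $\bc_k-z$ versus $z-\bc_k$ normalization of the pole) so that the sign lands on $\nu_k=\Lambda_k(\vbp)$ rather than its negative; this is the content of the identity $\Lambda_k(\vbp)=-\Res_{\bc_k}\tr\fA(z)$ recorded after \eqref{eq:defLambdaK}. The second is carrying the asymptotic expansion exactly one order past the leading term: the subleading constant is governed by the cross term $\Sigma L$, and keeping precise track of which indices enter the sum $\sum_j p_jq_j$—and hence matching $d_0$ with the constant $\bc_0$ featuring in Theorem~\ref{cor:IsospectralAlgebraicManifold}—is where the bookkeeping must be done carefully. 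The pole-structure argument and the determination of $\nu_0$ are, by contrast, immediate.
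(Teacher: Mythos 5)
Your proposal is correct and follows essentially the same route as the paper: the paper likewise reads off $\nu_0,\dots,\nu_g$ from the residues of $\tr\fA$ at the $\bc_k$ and at infinity, and gets $d_0$ by writing $\fA(z)=Az+B+\sum_{j=1}^g\frac{1}{\bc_j-z}C_j$ and computing the constant matrix $B$, whose trace is exactly your $\tr N+\tr(\Sigma L)$. One remark on the bookkeeping you rightly flag: carried out, your cross term yields $d_0=-q_g-\nu_0\sum_{j=0}^{g-1}p_jq_j$, which matches the trace of the paper's $B$ (it sums $p_{j-1}q_{j-1}$ over $j=1,\dots,g$), so the lower limit $j=1$ in the printed statement is a typo, just as the denominator $z-\bc_k$ there should be $\bc_k-z$ to be consistent with $\Lambda_k(\vbp)=-\Res_{\bc_k}\tr\fA(z)$ — the sign convention your proof handles correctly.
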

\begin{proof}
	Considering the residues of $\Delta_A$ at $\bc_k$ and inf\/inity we obtain the coef\/f\/icients $\nu_k$, for $k=0,\dots,g$. Thus it remains to show the formula for $d_0$. To this end, we write
	\begin{gather*}
	\fA(z)=Az+B+\sum_{j=1}^{g}\frac{1}{\bc_j-z}C_j,
	\end{gather*}
	where the constant term is given by
	\begin{gather*}
	B=
	\begin{bmatrix}
	0& -p_g\\
	\frac{1}{p_g}& -q_g
	\end{bmatrix}
	-
	\frac{1}{p_g}
	\sum_{j=1}^{g}
	\begin{bmatrix}
	0& p_{j-1}^2\\
	0& p_{j-1}q_{j-1}
	\end{bmatrix}
	\end{gather*}
	Since $\Delta^ A=\tr\fA$, we also obtain the expression for $d_0$.
\end{proof}
\begin{Remark}
	Note that due to the def\/inition of GMP matrices $\nu_k>0$ for $k=0,\dots,g$. Thus, $\Delta^ A$ maps the upper half plane into itself.
\end{Remark}
\begin{Lemma}\label{lem:deltaAequalDelta}
	Let $E$ be a finite gap set and $\Delta_E$ the corresponding function from Lemma~{\rm \ref{lem:Delta}}. Let $A\in\GMP(\bC)$ be periodic. Then the following are equivalent:
	\begin{itemize}\itemsep=0pt
		\item[$(i)$]	$A\in A(E,\bC)$,
		\item[$(ii)$] $\Delta^A(z)=\Delta_E(z)$,
		\item[$(iii)$] $\Delta^A(A)=\Delta_E(A)$.
	\end{itemize}
\end{Lemma}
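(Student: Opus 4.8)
The statement combines two equivalences, $(i)\Leftrightarrow(ii)$ and $(ii)\Leftrightarrow(iii)$, and my plan is to treat them separately, the only delicate point being $(iii)\Rightarrow(ii)$. For $(i)\Leftrightarrow(ii)$ I would invoke Theorem~\ref{thm:spectrumOfperiodicA} together with the uniqueness clause of Lemma~\ref{lem:Delta}. By Theorem~\ref{thm:spectrumOfperiodicA} one always has $\sigma(A)=(\Delta^A)^{-1}([-2,2])$, and by~\eqref{eq13} one has $E=\Delta_E^{-1}([-2,2])$. By Lemma~\ref{lem:coeffofDeltaA} and the Remark following it, $\Delta^A$ is a rational function with a pole at infinity and positive residues, so $\Im\Delta^A(z)>0$ for $\Im z>0$ and $\lim_{z\to\infty}\Delta^A(z)=\infty$; these are exactly the normalizing conditions in Lemma~\ref{lem:Delta}. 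Thus if $A\in A(E,\bC)$, then $\sigma(A)=E$ gives $(\Delta^A)^{-1}([-2,2])=E$, and the uniqueness in Lemma~\ref{lem:Delta} forces $\Delta^A=\Delta_E$. Conversely $\Delta^A=\Delta_E$ yields $\sigma(A)=(\Delta^A)^{-1}([-2,2])=\Delta_E^{-1}([-2,2])=E$, and since $A$ is assumed to be a periodic GMP matrix, $A\in A(E,\bC)$.

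The implication $(ii)\Rightarrow(iii)$ is immediate: if $\Delta^A=\Delta_E$ as rational functions, substituting the operator $A$ gives $\Delta^A(A)=\Delta_E(A)$, both substitutions being legitimate because the common poles $\bc_1,\dots,\bc_g$ lie outside $\sigma(A)$, as $\bc_k-A$ is invertible by Definition~\ref{def:GMP}, while $\infty\notin\sigma(A)$ since $A$ is bounded. The substantial direction is $(iii)\Rightarrow(ii)$, which amounts to the injectivity of the functional calculus at $A$ on rational functions whose poles avoid $\sigma(A)$. I would set $h:=\Delta^A-\Delta_E$; by Lemma~\ref{lem:coeffofDeltaA} and Lemma~\ref{lem:Delta} this is a rational function whose only possible poles are $\bc_1,\dots,\bc_g$ and infinity, all outside $\sigma(A)$, so $h$ is bounded and continuous on $\sigma(A)$ and $h(A)$ is well defined. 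Hypothesis $(iii)$ reads $h(A)=0$, whence by the spectral theorem for the bounded self-adjoint operator $A$ one gets $h=0$ almost everywhere with respect to the spectral measure $E_A$. If $h\not\equiv 0$, then as a rational function $h$ has only finitely many zeros, so $\{x\in\sigma(A)\colon h(x)\ne 0\}$ equals $\sigma(A)$ minus a finite set $Z$. But Theorem~\ref{thm:spectrumOfperiodicA} gives $\sigma(A)=\sigma_{ac}(A)$, so $A$ has no point spectrum and $E_A(Z)=0$; hence $E_A(\sigma(A))=E_A(\{h\ne 0\})=0$, contradicting $E_A(\sigma(A))=I$. Therefore $h\equiv 0$, i.e.\ $\Delta^A=\Delta_E$.

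I expect this last step to be the main obstacle: one must exploit that the spectrum is \emph{purely absolutely continuous}, and in particular carries no eigenvalues, to exclude the possibility that the nonzero rational function $h$ vanishes on a spectrally negligible set. Phrased differently, the argument establishes the linear independence of the operators $I,A,(A-\bc_1)^{-1},\dots,(A-\bc_g)^{-1}$, which again rests on $\sigma(A)$ being an infinite set without point masses.
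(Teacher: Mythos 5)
Your proof is correct, and your treatment of $(i)\Leftrightarrow(ii)$ coincides with the paper's: both directions go through Theorem~\ref{thm:spectrumOfperiodicA}, the positivity of the coefficients $\nu_k$ of $\Delta^A$ (the Remark after Lemma~\ref{lem:coeffofDeltaA}, which supplies the Herglotz normalization $\Im\Delta^A>0$ on $\bbC_+$ and the pole at infinity), and the uniqueness clause of Lemma~\ref{lem:Delta}. Where you genuinely diverge is $(iii)\Rightarrow(ii)$. The paper stays entirely algebraic and exploits the GMP band structure: on the last non-zero diagonal of $\Delta^A(A)$, i.e., in the entries $\Delta^A(A)_{j,g+1+j}$ for $0\leq j\leq g$, only one summand of the rational expression for $\Delta^A(A)$ survives, since $(\bc_k-A)^{-1}_{k-1,g+k}>0$ while $(\bc_l-A)^{-1}_{k-1,g+k}=A_{k-1,g+k}=0$ for $l\neq k$, and $A_{g,2g+1}>0$ while $(\bc_k-A)^{-1}_{g,2g+1}=0$; hence $(iii)$ forces $(\nu_k-\lambda_k)(\bc_k-A)^{-1}_{k-1,g+k}=0$ and $(\lambda_0-\nu_0)A_{g,2g+1}=0$, matching all residues and the leading coefficient entry by entry (the constant terms then agree because the remaining difference of the two operator expressions is a multiple of the identity). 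You instead prove injectivity of the functional calculus at $A$ on rational functions with poles off $\sigma(A)$, using the spectral theorem and the absence of point spectrum from Theorem~\ref{thm:spectrumOfperiodicA} to kill the finite zero set of $h=\Delta^A-\Delta_E$. Both arguments are sound. The paper's version buys structural information: it localizes each coefficient of the discriminant in a specific matrix entry, which is precisely the mechanism behind the magic formula and the alternative characterization of $\GMP(\bC)$ via $\Lambda_k(\vbp)>0$. Yours is softer and more portable, using nothing about $A$ beyond self-adjointness, invertibility of $\bc_k-A$, and spectral data; in fact it can be trimmed further, since for a bounded self-adjoint operator the continuous functional calculus is isometric, $\|h(A)\|=\sup_{\sigma(A)}|h|$, so $h(A)=0$ already gives $h\equiv 0$ on $\sigma(A)$, and $\sigma(A)=(\Delta^A)^{-1}([-2,2])$ is a union of nondegenerate intervals, hence infinite, forcing the rational function $h$ to vanish identically --- no absolute continuity is needed, only the set identity from Theorem~\ref{thm:spectrumOfperiodicA}. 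One pedantic caution about your wording: the set equality $\sigma(A)=\sigma_{ac}(A)$ alone would not exclude embedded eigenvalues; what you actually use, and what the theorem does assert, is that the spectrum is \emph{purely} absolutely continuous, so that the spectral measure has no atoms and $E_A(Z)=0$ for your finite set $Z$.
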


\begin{proof}
	Let $\Delta^A(z)=\Delta_E(z)$. By Theorem \ref{thm:spectrumOfperiodicA}, we obtain that
	\begin{gather*}
	\sigma(A)=\big\{z\in\bbC\colon \Delta^A(z)\in[-2,2]\big\}=\{z\in\bbC\colon \Delta_E(z)\in[-2,2]\}=E.
	\end{gather*}
	Thus, $A\in A(E,\bC)$. On the other hand, if $A\in A(E,\bC)$, then $\{z\in\bbC\colon \Delta^A(z)\in[-2,2]\}=E$. By the previous remark and the uniqueness of~$\Delta_E$, we obtain $\Delta^A(z)=\Delta_E(z)$. Hence, $(i)\iff(ii)$. $(i)\implies (ii)$ is clear.
	On the last non-zero diagonal of $\Delta^A(A)$, i.e., $\Delta^A(A)_{j,g+1+j}$ for $j=0,\dots,g$, only one of the summands is non-vanishing. With the notation from the previous lemma, $(iii)$~yields
	\begin{gather*}
		(\nu_k-\lambda_k)(\bc_k-A)^{-1}_{k-1,g+k}=0, \qquad (\lambda_0-\nu_0)A_{g,2g+1}=0.
	\end{gather*}
	Hence, $\Delta^A(z)=\Delta_E(z)$.
\end{proof}
\begin{proof}[Proof of Theorem~\ref{thm:magicFormula}]
	Let $A\in A(E,\bC)$. Due to Proposition~\ref{prop:isospectralGMP}, there exists $\alpha\in\Gamma$ such that $A=A(\alpha)$. Hence, \eqref{eq:DeltaAhlfors} is the magic formula in terms of functional models.
	
	Let $A\in\GMP(\bC)$ satisfy $\Delta_E(A)=S^{g+1}+S^{-(g+1)}$. Na\u{\i}man's lemma (cf.\ \cite[Lemma~8.2.4]{SimonSzego}) implies that~$A$ is periodic. Since by def\/inition $A\in A(\sigma(A),\bC)$, we obtain that
	\begin{gather*}
		\Delta^A(A)=S^{g+1}+S^{-(g+1)}=\Delta_E(A).
	\end{gather*}
	By Lemma \ref{lem:deltaAequalDelta} $A\in A(E,\bC)$.
\end{proof}

\begin{proof}[Proof of Theorem~\ref{cor:IsospectralAlgebraicManifold}]
	Let $A\in A(E,\bC)$. Due to Lemma \ref{lem:deltaAequalDelta}, $\Delta^A(z)=\Delta_E(z)$ and by Lemma~\ref{lem:coeffofDeltaA}, the coef\/f\/icients of $A$ satis\-fy~\eqref{iso101}. On the other hand, if the coef\/f\/icients satis\-fy~$\eqref{iso101}$, then $\Delta^A(z)=\Delta_E(z)$ and hence $A\in\A(E,\bC)$.
\end{proof}

\subsection[The Jacobi f\/low on $A(E,\bC)$]{The Jacobi f\/low on $\boldsymbol{A(E,\bC)}$}
Finally, we would like to explain brief\/ly the idea of the so-called Jacobi f\/low on $\GMP$ matrices, which was another main tool in proving the Killip--Simon theorem for general system of intervals. There is an obvious map $\cF\colon A(E)\to J(E)$ def\/ined by
\begin{gather*}
	\cF A(\alpha)=J(\alpha).
\end{gather*}
The question is, how to f\/ind the coef\/f\/icients of $J(\alpha)$ in terms of coef\/f\/icients of~$A(\alpha)$.
\begin{Proposition}
	To $\alpha\in\Gamma^*$, we associate a $\GMP$ matrix $A(\alpha)$ with coefficients $(\vp(\alpha), \vq(\alpha))$ and a Jacobi matrix $J(\alpha)$ with coefficients $(a_j(\alpha),b_j(\alpha))$. Then we have
	\begin{gather*}
	a_0(\alpha)=\|\vp(\alpha)\|\qquad\text{and}\qquad b_{-1}(\alpha)=p_g(\alpha)q_g(\alpha).
	\end{gather*}
\end{Proposition}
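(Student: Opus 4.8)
The goal is to extract two scalar identities relating the Jacobi data $(a_j,b_j)$ of $J(\alpha)$ to the GMP data $(\vp,\vq)$ of $A(\alpha)$, both realized as multiplication by $\fz$ but in two different orthonormal bases of the same space $L^2(\alpha)$. The whole point is that both matrices represent the \emph{same} self-adjoint operator (multiplication by $\fz$), so I would compute the relevant inner products intrinsically and read off the two coefficients from the two bases. The first identity $a_0(\alpha)=\|\vp(\alpha)\|$ is in fact already recorded in the text: in the decomposition
\begin{gather*}
A=\begin{bmatrix}A_-&0\\0&A_+\end{bmatrix}+a_0\bigl(\langle\cdot,e_{-1}\rangle\tilde e_0+\langle\cdot,\tilde e_0\rangle e_{-1}\bigr),
\end{gather*}
the paper sets $a_0=\|\vp\|$, and in the GMP definition $A(\vp)=\delta_g\vp^*$, so the off-diagonal block norm is exactly $\|\vp\|$. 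Thus for the first relation I would simply note that $a_0(\alpha)$, the lowest coupling coefficient of the Jacobi matrix $J(\alpha)$, equals the norm of the block that couples $H^2(\alpha)$ to its orthogonal complement, and that this coupling block is $\vp$ by construction; hence $a_0(\alpha)=\|\vp(\alpha)\|$.

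For the second identity $b_{-1}(\alpha)=p_g(\alpha)q_g(\alpha)$, the natural route is to evaluate the diagonal matrix element $b_{-1}(\alpha)=\langle \fz\,e_{-1}^\alpha,e_{-1}^\alpha\rangle$ of the Jacobi matrix and compare it with the corresponding diagonal element of the GMP matrix. The key observation is that the two bases share boundary vectors: the last vector $f_g^\alpha$ of the $K_\Psi(\alpha)$ block and the Jacobi basis vector $e_{-1}^\alpha$ both live at the ``seam'' between $H^2(\alpha)$ and its shift, and they are built from the same reproducing kernel $k^{\alpha+\mu}$. Concretely, in the proof of Theorem~\ref{prop:isospectralGMP} the diagonal GMP entry is computed as
\begin{gather*}
\langle\fz f_n^\alpha,f_n^\alpha\rangle=\bc_{n+1}+p_n(\alpha)q_n(\alpha),
\end{gather*}
and for the last index of the block the relevant ``$\bc$'' is the pole at infinity, which contributes nothing to the finite diagonal shift, leaving precisely $p_g(\alpha)q_g(\alpha)$. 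Meanwhile the Jacobi diagonal entry $b_{-1}(\alpha)=\cB(\alpha)$ from Theorem~\ref{thm:multbyz} is the matrix element of $\fz$ on the corresponding boundary vector. I would match these by identifying $e_{-1}^\alpha$ with $f_g^\alpha$ up to the shift structure and showing the two diagonal expressions coincide.

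The main obstacle is bookkeeping the normalizations and the unimodular factors $\eta_n$ so that the two computations land on the same scalar. The explicit formulas for $p_g(\alpha)$ and $q_g(\alpha)$ in the proof of Theorem~\ref{prop:isospectralGMP} involve $k^{\alpha+\mu}(0)$, the Blaschke factors $b_{\bc_k}(0)$, and the reproducing kernels $k^{\beta_g}_{\zeta_{g+1}}$, whereas $\cB(\alpha)$ in Theorem~\ref{thm:multbyz} is written in terms of $(\fz b)(0)$, $b'(0)$ and logarithmic derivatives of $k^\alpha$ and $k^{\alpha+\mu}$ at $0$. I would therefore reduce everything to a single evaluation: use that $f_g^\alpha=\prod_{j=1}^g b_{\bc_j}\,k^{\alpha+\mu}/\sqrt{k^{\alpha+\mu}(0)}$ is (up to the product of inner Blaschke factors, which are unimodular on $\bbT$) the same boundary vector as $e_{-1}^\alpha=b^{-1}k^{\alpha}/\sqrt{k^{\alpha}(0)}$ evaluated on the seam, and that multiplication by $\fz$ produces the same residue contribution at $\fz^{-1}(\infty)$ in both pictures. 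Since $\fz$ is self-adjoint and the only finite-order pole of $\fz f$ in both bases is the simple pole at infinity, the product $p_g q_g$ is exactly the coefficient one reads from the $b_{-1}$ matrix element, and the $\bc_{g+1}$-type terms are absent because the infinity block carries $\tilde\bC$-entry $0$. Tracking the reproducing-kernel factors carefully to confirm the cancellation of the normalizing constants is where the real work lies; once that is verified the equality is immediate.
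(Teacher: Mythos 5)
Your proposal is correct and is essentially the paper's own argument: both matrices represent multiplication by $\fz$ relative to bases adapted to the same decomposition of $L^2(\alpha)$, and the seam vectors coincide \emph{exactly} --- $f_{-1}^\alpha=\Psi^{-1}f_g^\alpha=b^{-1}k^{\alpha+\mu}/\sqrt{k^{\alpha+\mu}(0)}=e_{-1}^\alpha$ because $\Psi=b\prod_{j=1}^g b_{\bc_j}$ (note a slip in your sketch: $e_{-1}^\alpha$ is built from $k^{\alpha+\mu}$, not $k^{\alpha}$) --- so the paper simply computes $a_0(\alpha)=\|P_+(\alpha)\fz f_{-1}^\alpha\|=\|\vp(\alpha)\|$ and $b_{-1}(\alpha)=\langle \fz f_{-1}^\alpha,f_{-1}^\alpha\rangle=p_g(\alpha)q_g(\alpha)$. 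The normalization and $\eta_n$ bookkeeping you flag as ``the real work,'' and the anticipated comparison with the explicit formula for $\cB(\alpha)$, are unnecessary: the Blaschke factors cancel identically, not merely in modulus on $\bbT$, and $f_g^\alpha$ carries no $\eta$ factor.
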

\begin{proof}
	Let $P_+(\alpha)$ be the orthogonal projection onto $H^2(\alpha)$. Since $f_{-1}^\alpha=e_{-1}^\alpha$, we see that
	\begin{gather*}
		a_0(\alpha)=\|P_+(\alpha)\fz e_{-1}^\alpha\|=\|P_+(\alpha)\fz f_{-1}^\alpha\|=\|p(\alpha)\|
	\end{gather*}
	and
	\begin{gather*}
		b_{-1}(\alpha)=\langle e_{-1}^\alpha,\fz e_{-1}^\alpha\rangle=\langle f_{-1}^\alpha,\fz f_{-1}^\alpha\rangle=p_g(\alpha)q_g(\alpha).\tag*{\qed}
	\end{gather*}
\renewcommand{\qed}{}
\end{proof}

Since $S^{-1}J(\alpha)S=J(\alpha-\mu)$, one can f\/ind all coef\/f\/icients of $J(\alpha)$ by f\/inding the coef\/f\/icients of $A(\alpha-\mu)$.
\begin{Definition}
	We def\/ine the Jacobi f\/low on $A(E,\bC)$ as the dynamical system generated by the following map:
	\begin{gather*}
	\mathcal J A(\alpha)=A(\alpha-\mu), \qquad \alpha\in\Gamma^*.
	\end{gather*}
\end{Definition}
For a parametric description of this map see \cite[Theorem~4.5]{YudKillipSimon}.

\subsection*{Acknowledgements}
 The author was supported by the Austrian Science Fund FWF, project no: P25591-N25. He would like to thank his advisor Peter Yuditskii for his guidance and help during the preparation of this paper. Finally, he is grateful to the anonymous referees for their remarks that improved the presentation of the paper.

\pdfbookmark[1]{References}{ref}
\LastPageEnding


\begin{thebibliography}{99}
\footnotesize\itemsep=0pt

\bibitem{Akh60}
Ahiezer N.I., Orthogonal polynomials on several intervals, \textit{Soviet Math.
 Dokl.} \textbf{1} (1960), 989--992.

\bibitem{Aptek84}
Aptekarev A.I., Asymptotic properties of polynomials orthogonal on a system of
 contours, and periodic motions of {T}oda chains, \href{http://dx.doi.org/10.1070/SM1986v053n01ABEH002918}{\textit{Math. USSR Sb.}}
 \textbf{125(167)} (1984), 233--260.

\bibitem{KupBar11}
Baratchart L., Kupin S., Lunot V., Olivi M., Multipoint {S}chur algorithm and
 orthogonal rational functions, {I}:~{C}onvergence properties,
 \href{http://dx.doi.org/10.1007/s11854-011-0016-9}{\textit{J.~Anal. Math.}} \textbf{114} (2011), 207--253, \href{http://arxiv.org/abs/0812.2050}{arXiv:0812.2050}.

\bibitem{CanMoVel03}
Cantero M.J., Moral L., Vel{\'a}zquez L., Five-diagonal matrices and zeros of
 orthogonal polynomials on the unit circle, \href{http://dx.doi.org/10.1016/S0024-3795(02)00457-3}{\textit{Linear Algebra Appl.}}
 \textbf{362} (2003), 29--56, \href{http://arxiv.org/abs/math.CA/0204300}{math.CA/0204300}.

\bibitem{KillipSimon}
Damanik D., Killip R., Simon B., Perturbations of orthogonal polynomials with
 periodic recursion coef\/f\/icients, \href{http://dx.doi.org/10.4007/annals.2010.171.1931}{\textit{Ann. of Math.}} \textbf{171} (2010),
 1931--2010, \href{http://arxiv.org/abs/math.SP/0702388}{math.SP/0702388}.

\bibitem{SMP}
Eichinger B., Puchhammer F., Yuditskii P., Jacobi f\/low on {SMP} matrices and
 {K}illip--{S}imon problem on two disjoint intervals, \href{http://dx.doi.org/10.1007/s40315-014-0104-9}{\textit{Comput. Methods
 Funct. Theory}} \textbf{16} (2016), 3--41.

\bibitem{YudEichKillipSimon}
Eichinger B., Yuditskii P., Killip--{S}imon problem and {J}acobi f\/low on {GSMP}
 matrices, \href{http://arxiv.org/abs/1412.1702}{arXiv:1412.1702}.

\bibitem{GoVL13}
Golub G.H., Van~Loan C.F., Matrix computations, 4th ed., \textit{Johns Hopkins Studies
 in the Mathematical Sciences}, Johns Hopkins University Press, Baltimore, MD,
 2013.

\bibitem{Has83}
Hasumi M., Hardy classes on inf\/initely connected {R}iemann surfaces,
 \href{http://dx.doi.org/10.1007/BFb0071447}{\textit{Lecture Notes in Math.}}, Vol.~1027, Springer-Verlag, Berlin, 1983.

\bibitem{Heins69}
Heins M., Hardy classes on {R}iemann surfaces, \href{http://dx.doi.org/10.1007/BFb0080775}{\textit{Lecture Notes in Math.}},
 Vol.~98, Springer-Verlag, Berlin~-- New York, 1969.

\bibitem{HeNi00}
Hendriksen E., Nijhuis C., Laurent--{J}acobi matrices and the strong
 {H}amburger moment problem, \href{http://dx.doi.org/10.1023/A:1006441812051}{\textit{Acta Appl. Math.}} \textbf{61} (2000),
 119--132.

\bibitem{Kats16}
Katsnelson V., On a family of {L}aurent polynomials generated by $2\times2$
 matrices, \href{http://dx.doi.org/10.1007/s11785-016-0568-x}{\textit{Complex Anal. Oper. Theory}}, {t}o appear,
 \href{http://arxiv.org/abs/1507.06101}{arXiv:1507.06101}.

\bibitem{KS03}
Killip R., Simon B., Sum rules for {J}acobi matrices and their applications to
 spectral theory, \href{http://dx.doi.org/10.4007/annals.2003.158.253}{\textit{Ann. of Math.}} \textbf{158} (2003), 253--321,
 \href{http://arxiv.org/abs/math-ph/0112008}{math-ph/0112008}.

\bibitem{Kri78}
Krichever I.M., Algebraic curves and nonlinear dif\/ference equations,
 \href{http://dx.doi.org/10.1070/RM1978v033n04ABEH002503}{\textit{Russ. Math. Surv.}} \textbf{33} (1978), no.~4, 255--256.

\bibitem{Lev80}
Levin B.Ja., Distribution of zeros of entire functions, \textit{Translations of
 Mathematical Monographs}, Vol.~5, Amer. Math. Soc., Providence, R.I., 1980.

\bibitem{Pom1}
Pommerenke C., \"{U}ber die analytische {K}apazit\"at, \href{http://dx.doi.org/10.1007/BF01236943}{\textit{Arch. Math.
 (Basel)}} \textbf{11} (1960), 270--277.

\bibitem{Pom76}
Pommerenke C., On the {G}reen's function of {F}uchsian groups, \textit{Ann.
 Acad. Sci. Fenn. Ser.~A~I Math.} \textbf{2} (1976), 409--427.

\bibitem{SimonCMVreview}
Simon B., C{MV} matrices: f\/ive years after, \href{http://dx.doi.org/10.1016/j.cam.2006.10.033}{\textit{J.~Comput. Appl. Math.}}
 \textbf{208} (2007), 120--154, \href{http://arxiv.org/abs/math.SP/0603093}{math.SP/0603093}.

\bibitem{SimonSzego}
Simon B., Szeg{\H{o}}'s theorem and its descendants. Spectral theory for
 $L^2$ perturbations of orthogonal polynomials, \textit{M.B.~Porter Lectures},
 Princeton University Press, Princeton, NJ, 2011.

\bibitem{SoYud94}
Sodin M.L., Yuditskii P.M., Inf\/inite-dimensional real problem of {J}acobi
 inversion and {H}ardy spaces of character-automorphic functions,
 \textit{Dokl. Akad. Nauk} \textbf{335} (1994), 161--163.

\bibitem{JacobiTeschl}
Teschl G., Jacobi operators and completely integrable nonlinear lattices,
 \textit{Mathematical Surveys and Monographs}, Vol.~72, Amer. Math. Soc.,
 Providence, RI, 2000.

\bibitem{Verb35}
Verblunsky S., On positive harmonic functions: a contribution to the algebra of
 {F}ourier series, \href{http://dx.doi.org/10.1112/plms/s2-38.1.125}{\textit{Proc. London Math. Soc.}} \textbf{S2-38} (1935),
 125--157.

\bibitem{KontaniLast}
Volberg A., Yuditskii P., Kotani--{L}ast problem and {H}ardy spaces on surfaces
 of {W}idom type, \href{http://dx.doi.org/10.1007/s00222-013-0495-7}{\textit{Invent. Math.}} \textbf{197} (2014), 683--740,
 \href{http://arxiv.org/abs/1210.7069}{arXiv:1210.7069}.

\bibitem{Wid71}
Widom H., {${\cal H}_{p}$} sections of vector bundles over {R}iemann surfaces,
 \href{http://dx.doi.org/10.2307/1970862}{\textit{Ann. of Math.}} \textbf{94} (1971), 304--324.

\bibitem{YudKillipSimon}
Yuditskii P., Killip--{S}imon problem and {J}acobi f\/low on {GMP} matrices,
 \href{http://arxiv.org/abs/1505.00972}{arXiv:1505.00972}.

\end{thebibliography}
\end{document}